\newcommand{\COLORON}{1}
\newcommand{\NOTESON}{0}
\newcommand{\Debug}{0}
\newcommand{\comment}[1]{}
\newcommand{\COMMENT}[1]{}
\definecolor{darkgray}{rgb}{0.3,0.3,0.3}
\newcommand{\defi}[1]{{\color{darkgray}\emph{#1}}}
\newtheorem{proposition}{Proposition}[section]
\newtheorem{definition}[proposition]{Definition}
\newtheorem{theorem}[proposition]{Theorem}
\newtheorem{corollary}[proposition]{Corollary}
\newtheorem{lemma}[proposition]{Lemma}
\newtheorem{observation}[proposition]{Observation}
\newtheorem{conjecture}{{Conjecture}}[section]
\newtheorem{problem}[conjecture]{{Problem}}
\newtheorem{question}[conjecture]{{Question}}
\newtheorem{examp}[proposition]{Example}
\newcommand{\FIG}{0}
\newcommand{\note}[1]{ 

\hspace*{-30pt}
	{\color{blue}  NOTE: \color{Turquoise}{\small  \tt \begin{minipage}[c]{1.1\textwidth}  #1 \end{minipage} \ignorespacesafterend }} 
	
	}
\else \newcommand{\note}[1]{} \fi
\newcommand{\afsubm}[1]{ \ifnum \Debug = 1 {\mymargin{#1}}
\fi} 
\newcommand{\fig}[1]{Figure ``{#1}''}
\else \newcommand{\fig}[1]{Figure~\ref{#1}} \fi
\renewcommand{\color}[1]{}
\newcommand{\N}{\ensuremath{\mathbb N}}
\newcommand{\R}{\ensuremath{\mathbb R}}
\newcommand{\Z}{\ensuremath{\mathbb Z}}
\newcommand{\cb}{\ensuremath{\mathcal B}}
\newcommand{\cc}{\ensuremath{\mathcal C}}
\newcommand{\ch}{\ensuremath{\mathcal H}}
\newcommand{\cu}{\ensuremath{\mathcal U}}
\newcommand{\oo}{\ensuremath{\omega}}
\newcommand{\sm}{\backslash}
\DeclareRobustCommand{\cev}[1]{%
  \mathpalette\do@cev{#1}%
}
\newcommand{\do@cev}[2]{%
  \fix@cev{#1}{+}%
  \reflectbox{$\m@th#1\vec{\reflectbox{$\fix@cev{#1}{-}\m@th#1#2\fix@cev{#1}{+}$}}$}%
  \fix@cev{#1}{-}%
}
\newcommand{\fix@cev}[2]{%
  \ifx#1\displaystyle
    \mkern#23mu
  \else
    \ifx#1\textstyle
      \mkern#23mu
    \else
      \ifx#1\scriptstyle
        \mkern#22mu
      \else
        \mkern#22mu
      \fi
    \fi
  \fi
}
\newcommand{\nin}{\ensuremath{{n\in\N}}}
\newcommand{\pth}[2]{\ensuremath{#1}\text{--}\ensuremath{#2}~path}
\newcommand{\pths}[2]{\ensuremath{#1}\text{--}\ensuremath{#2}~paths}
\newcommand{\seq}[1]{\ensuremath{(#1_n)_{n\in\N}}} 
\newcommand{\g}{\ensuremath{G\ }}
\newcommand{\G}{\ensuremath{G}}
\newcommand{\ceil}[1]{\ensuremath{\left\lceil #1 \right\rceil}}
\newcommand{\Cg}{Cayley graph}
\newcommand{\Lr}[1]{Lemma~\ref{#1}}
\newcommand{\Tr}[1]{Theorem~\ref{#1}}
\newcommand{\Sr}[1]{Section~\ref{#1}}
\newcommand{\Prr}[1]{Pro\-position~\ref{#1}}
\newcommand{\Cr}[1]{Corollary~\ref{#1}}
\newcommand{\Cnr}[1]{Con\-jecture~\ref{#1}}
\newcommand{\Or}[1]{Observation~\ref{#1}}
\newcommand{\Dr}[1]{De\-fi\-nition~\ref{#1}}
\newcommand{\lf}{locally finite}
\renewcommand{\iff}{if and only if}
\newcommand{\fe}{for every}
\newcommand{\Fe}{For every}
\newcommand{\st}{such that}
\newcommand{\ti}{there is}
\newcommand{\obda}{without loss of generality}
\newcommand{\wrt}{with respect to}
\newcommand{\istc}{is straightforward to check}
\newcommand{\leth}{large enough that}
\newcommand{\labequ}[2]{ \begin{equation} \label{#1} #2 \end{equation} } 
\newcommand{\labtequ}[2]{
 \begin{equation} \label{#1} 	\begin{minipage}[c]{0.9\textwidth}  #2 \end{minipage} \ignorespacesafterend \end{equation} }
\newcommand{\mymargin}[1]{
 \ifnum \Debug = 1
  \marginpar{%
    \begin{minipage}{\marginparwidth}\small%
      \begin{flushleft}%
        {\color{blue}#1}%
      \end{flushleft}%
   \end{minipage}%
  }%
 \fi
}%
\newcommand{\extras}[1]{
 \ifnum \Debug = 1
\section{Extras} #1
 \fi
}%
\newcommand{\mySection}[2]{}
\newcommand{\Erd}{Erd\H{o}s}
\newcommand{\andim}{\ensuremath{\mathrm{ANdim}}}
\newcommand{\forb}[1]{\mathrm{Forb}(#1)}
\newcommand{\rank}{\mathrm{Rank}}
\newcommand{\diam}{\mathrm{diam}}
\newcommand{\ball}{\mathrm{Ball}}
\newcommand{\mb}{meta-bridge}
\newcommand{\game}{$\gamma$--edge}
\newcommand{\asmi}{asymptotic minor}
\newtheorem{remark}{Remark}
\newtheorem{Def}{Definition}
\newcommand{\ncm}{near-component}
\newcommand{\pp}[1]{ \ifnum \Debug = 1 {\marginpar{\tiny #1 --pp}} \fi}
\newcommand{\asm}[2]{\ensuremath{#1 \prec^\infty #2}}
\newcommand{\nasm}[2]{\ensuremath{#1 \not \prec^\infty #2}}
\newcommand{\asdim}{\mathrm{asdim}}
\newcommand{\fan}{\mathrm{Fan}}
\begin{document}

\title{Graph minors and metric spaces}

\author[1]{Agelos Georgakopoulos\thanks{Supported by the European Research Council (ERC) under the European Union's Horizon 2020 research and innovation programme (grant agreement No 639046), and EPSRC grants EP/V048821/1 and EP/V009044/1.}}
\affil[1]{  {Mathematics Institute}\\ {University of Warwick}\\  {CV4 7AL, UK}}
\author[2]{Panos Papasoglu}
\affil[2]{
{Mathematical Institute,
University of Oxford,
Andrew Wiles Building, 
Radcliffe Observatory Quarter, 
Woodstock Road, 
Oxford, 
OX2 6GG, 
UK\\
papazoglou@maths.ox.ac.uk  }}

\date{\today}
\maketitle

\begin{abstract}
We present problems and results that combine graph-minors and coarse geometry. For example, we ask whether every geodesic metric space (or graph) without a fat $H$ minor is quasi-isometric to a graph with no $H$ minor, for an arbitrary finite graph $H$. We answer this affirmatively for a few small $H$. We also present a metric analogue of Menger's theorem and K\"onig's ray theorem. We conjecture metric analogues of the \Erd--P\'osa Theorem and Halin's grid theorem.
\end{abstract}

{\bf{Keywords:} } coarse geometry, quasi-isometry, asymptotic minor, Menger's theorem.\\

{\bf{MSC 2020 Classification:}} 51F30, 05C83, 05C10, 05C63, 20F69.

\maketitle

\section{Introduction}

Gromov's \cite{GroAsyInv} coarse geometry perspective had a groundbreaking impact on geometric group theory, also spreading onto nearby areas. As an example highlight, it has been proved that (any \Cg\ of) every finitely generated group  with finite asymptotic dimension embeds coarsely into Hilbert space \cite{YuNov}, and therefore it satisfies the  Novikov conjecture \cite{GroAsyInv}. 

Several recent papers study the interplay between Gromov's asymptotic dimension and graph-theoretic notions such as graph minors \cite{quasitrees,BBEGLPS,FujPapCoa,FujPapAsy,JorLanGeo,OstExp,OstRosMet}. With these in the background, we present some results and questions with a strong interplay between geometry and graph theory. We hope that this will evolve into a coherent theory that could be called \defi{`Coarse Graph Theory'} or \defi{`Graph-Theoretic Geometry'}.

We would like to view graphs `from far away' to see their large-scale geometry and its implications. More generally, the same idea can be applied to Riemannian manifolds, or more generally, arbitrary length spaces. In what follows $X$ can stand for a length space, or an infinite graph, or a family of finite graphs, endowed with their graph-distance to turn them into metric spaces; none of our statements are affected by this choice, the reader may choose their preferred setting. Our favourite such problem is

\begin{conjecture} \label{conj fat min}
Let $X$ be a graph or a length space, and let $H$ be a finite graph. Then $X$ has no $K$-fat $H$ minor for some $K\in \N$  \iff\ $X$ is $f(K)$-quasi-isometric to a graph with no $H$ minor, where $f: \N\to \N$ depends on $H$ only.\footnote{The backward direction is straightforward.}
\end{conjecture}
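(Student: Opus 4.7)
The reverse implication is immediate: a $K$-fat $H$-minor in $X$ is pushed forward by any quasi-isometry $X \to Y$ to an ordinary $H$-minor in $Y$ once $K$ exceeds the quasi-isometry constants, so the absence of $H$-minors in a target quasi-isometric to $X$ forces the absence of fat $H$-minors in $X$. We therefore focus on the forward direction.

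The plan is to build a discrete model of $X$ and argue that any $H$-minor of the model lifts to a fat $H$-minor of $X$. Fix a parameter $c$, to be chosen as a function of $K$ and $|V(H)|$, let $V \subseteq X$ be a maximal $c$-separated subset (a $c$-net), and form the graph $G$ on vertex set $V$ with $uv \in E(G)$ whenever $d_X(u,v) \le 2c$. Standard net arguments show that $G$, with its graph metric, is $(\lambda,\mu)$-quasi-isometric to $X$ with $\lambda, \mu$ depending only on $c$, via the inclusion $V \hookrightarrow X$ and a nearest-point retraction. It then suffices to prove that $G$ contains no ordinary $H$-minor whenever $X$ contains no $K$-fat $H$-minor.

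To establish this contrapositively, take branch sets $B_1, \ldots, B_{|V(H)|}$ in $G$ realizing $H$, pick a spanning tree $T_i$ of each $B_i$, and form $\tilde B_i \subseteq X$ as the union of $X$-geodesic segments realizing the edges of $T_i$ together with the $c$-balls about the vertices of $B_i$. Each $\tilde B_i$ is connected in $X$, and for every branch edge $ij$ of $H$ a short $X$-geodesic joins $\tilde B_i$ to $\tilde B_j$; this would produce an $H$-minor structure in $X$ of bounded geometry.

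The main obstacle is enforcing $K$-fatness of the collection $\{\tilde B_i\}$. Vertex-disjoint branch sets in $G$ may contain net vertices within distance $2c$ of each other, so their thickenings $\tilde B_i, \tilde B_j$ can come arbitrarily close in $X$ rather than lying at pairwise distance at least $K$. We therefore need a quantitative sparsification principle: any $H$-minor in $G$ can be replaced by one whose branch sets and branch paths are pairwise at distance at least a prescribed function of $K$ in $X$, at the cost of enlarging $K$. For trees $H$ iterative pruning suffices, and for small $H$ such as $K_3$ or $K_4$ we expect local rerouting surgery to work; these should be the cases the paper settles. Establishing such a sparsification for arbitrary $H$ is the step we expect to be the genuine obstacle, likely requiring structural input in the spirit of graph-minor theory.
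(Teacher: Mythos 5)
The statement you are trying to prove is \Cnr{conj fat min}, which is a \emph{conjecture}: the paper does not prove it, only the special cases $H=K_2$, $H=K_3$ (\Tr{triangle-free}), $H=K_{1,m}$ (\Tr{thm star}), and the cases reducible to these via \Tr{thm subdiv} (paths, cycles), together with cases from the literature ($K_{2,3}$, $K_4^-$). Indeed, the paper's final section records that the conjecture was subsequently \emph{disproved} by Davies, Hickingbotham, Illingworth \& McCarty, building on the Nguyen--Scott--Seymour counterexample to the coarse Menger conjecture. So no complete proof of the forward direction can exist, and your own closing admission --- that the ``quantitative sparsification principle'' for arbitrary $H$ is the genuine obstacle --- is exactly where the argument must (and does) break down.

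Beyond that, the specific reduction you propose is flawed even as a strategy for the cases that are true. You build a $c$-net graph $G$ quasi-isometric to $X$ and claim it suffices to show $G$ has no ordinary $H$-minor. But $G$ is quasi-isometric to $X$, and quasi-isometries preserve only \emph{asymptotic} minors (\Or{invariance}), not ordinary ones; a net of $X$ inherits essentially all of $X$'s small-scale minor structure. The paper's own example makes this concrete: $L'=\Z^2\times K_2$ has \emph{every} finite graph as a minor but no $2$-fat $K_5$ minor, and any reasonable net graph of $L'$ still contains $K_5$ minors. So the target graph in the forward direction cannot be a net of $X$; it must be a genuinely coarser quotient. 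This is what the paper's proofs actually do: for $K_3$ they contract $5K$-near-components of metric spheres $S_n$ into single vertices to produce a tree, and for $K_{1,m}$ they contract ``super-boxes'' built from near-components of metric annuli, in each case then verifying by hand that an $H$-minor of the quotient would lift to a $K$-fat $H$-minor of $X$. Your ``sparsification'' step is not a technical refinement of a sound plan but a restatement of the entire (false, in general) conjecture.
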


A quasi-isometry is a generalisation of a bi-Lipschitz map that allows for an additive error; see \Sr{sec MS}. This notion lies at the heart of Gromov's programme \cite{GroAsyInv}. A  fat minor is a coarse version of the notion of a graph minor  defined as follows.
Recall that $X$ has a (usually finite) graph $H$ as a \defi{minor}, if we can find connected \defi{branch sets} and \defi{branch paths} in $X$, \st\ after contracting each branch set to a point, and each branch path to an edge, we obtain a copy of $H$. We say that $X$ has $H$ as a \defi{$K$-fat minor}, if we can choose the above sets to be pairwise at distance $\geq K$, except that we do not require this for incident branch set--branch edge pairs ---which must be at distance 0 by definition; see also \Dr{def fat} below. This notion of fat minor also features in \cite{BBEGLPS}, and a slight variation features in \cite{CDNRV}.

If $X$ has a $K$-fat $H$ minor for every $K$, then we say that $X$ has $H$ as an \defi{\asmi}, and write \defi{$\asm{H}{X}$}. An important feature of asymptotic minors is that they are preserved under quasi-isometries, and in particular they are stable under changing the generating set in a \Cg\ of a group (\Or{invariance}).

For example, let $L$ be the square lattice on $\Z^2$, and let $L':= L \times K_2$ be its cartesian product with an edge. Then one can easily show that $L'$ has every finite graph as a minor, but it does not have $K_5$ or $K_{3,3}$ as a 2-fat minor. Thus this notion correctly captures the intuition that $L'$ looks like the plane from a coarse point of view. As another example, the cubic lattice on $\Z^3$ has every finite graph $H$ as an \asmi; to see this, embed $H$ in $\R^3$, fatten it slightly, then zoom $\Z^3$ out while keeping your $H$ fixed. 

\medskip

We know that \Cnr{conj fat min} holds for some very small graphs $H$. If $H=K_2$ (a pair of vertices with an edge), then  \Cnr{conj fat min} becomes the obvious fact that if $X$ has finite diameter then it is quasi-isometric to a point. The case $H=K_3$ is already non trivial and it follows from Manning's characterization of quasi-trees
\cite{Manning}. We give a self contained exposition of this
which entails a shorter proof of Manning's result (\Tr{triangle-free}). Further alternative characterisations of quasi-trees have been recently obtained by Berger \& Seymour \cite{BerSeyBou}. 

Chepoi, Dragan, Newman, Rabinovich \& Vax\`es \cite{CDNRV} settled the case $H=K_{2,3}$ of \Cnr{conj fat min} using slightly different terminology.  The similar case $H=K_4^-$, i.e.\ $K_4$ with an edge removed, has been settled by Fujiwara and the second author \cite{FujPapCoa}, along with a stronger result characterising quasi-cacti.  
In \Sr{sec stars} we settle the case where $H$ is a star $K_{1,m}, m\geq 1$. This is, as far as we know, a new result.

Call two graphs $H_1,H_2$ \defi{asymptotically equivalent}, if $\asm{H_1}{X}$ implies $\asm{H_2}{X}$ and vice-versa \fe\ space $X$. In \Sr{sec homeo} we show that if two finite graphs $H_1,H_2$ are homeomorphic as 1-complexes, then they are asymptotically equivalent (the converse is true as well if we allow $X$ to be disconnected). This immediately settles \Cnr{conj fat min} for paths and cycles $H$. Moreover, combined with the aforementioned result of \cite{FujPapCoa}, it implies that a graph is quasi-isometric to a cactus \iff\ it is quasi-isometric to an outerplanar graph (\Cr{cor OP}), a fact previously proved by Chepoi et al.\ \cite{CDNRV}. 

\pp{I think the converse IS true, for graphs with no cut vertices or cut edges, so maybe we should think a bit before stating the question. Also the converse IS NOT true is general eg take O and figure 8.}

In \Cnr{conj fat min} we are mostly interested in the case where $H$ is finite, but we point out some infinite cases of interest: in \Sr{sec Konig} we prove a coarse analogue of K\"onig's theorem, and in \Sr{sec Halin} we discuss coarse analogues of  Halin's grid theorem.
\medskip

We observe below (\Or{obs distortion}) that when $H$ is 2-connected in \Cnr{conj fat min}, then instead of a quasi-isometry we can equivalently ask for a bi-Lipschitz map. Bi-Lipschitz embeddings, aka.\ embeddings of bounded multiplicative distortion, into various targets, most notably tree metrics and normed spaces, are an actively researched topic, see e.g.\ \cite{CDNRV,eriksson-bique,NaorInt,Ostrovskii} and references therein. Embeddings of bounded additive distortion are also of interest \cite{CDNRV}. This raises:
\begin{question}
For which finite graphs $H$ is it true that if a graph $X$ has no $K$-fat $H$ minor for some $K\in \N$, then $X$ admits a map of bounded additive distortion onto a graph with no $H$ minor?
\end{question}
We know that this is the case when $H$ is a cycle \cite{KerTre} (see also claim \eqref{fourteen} in \Sr{quasitree}) or $H=K_{1,3}$. 

\medskip
Ostrovskii \& Rosenthal \cite{OstRosMet} proved that (infinite, locally finite) graphs with a forbidden minor have finite asymptotic dimension. Bonamy et al.\ \cite{BBEGLPS} improve this by proving that such graphs have asymptotic dimension at most 2 (as was conjectured in \cite{FujPapCoa}). They ask whether forbidding a fat minor implies bounded asymptotic dimension. Note that a positive answer to \Cnr{conj fat min} would imply this.  The following question is similar in spirit.

\begin{conjecture}[Coarse Hadwiger conjecture] \label{coa Had}
\Fe\ \nin, if $X$ has no asymptotic $K_{n+1}$ minor, then $X$ has Assouad-Nagata dimension $\andim(X)$ at most $n-1$.
\end{conjecture}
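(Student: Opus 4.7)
The plan is induction on $n$, using \Cnr{conj fat min} as a reduction tool and then invoking structural results for minor-closed graph classes. For $n=1$, the absence of an asymptotic $K_2$ minor forces $X$ to have bounded diameter, so $\andim(X)=0$. For $n=2$, the absence of an asymptotic $K_3$ minor combined with Manning's characterisation (\Tr{triangle-free}) gives that $X$ is quasi-isometric to a tree, and trees are easily seen to have Assouad-Nagata dimension at most $1$.

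For the inductive step, the first move would be to establish \Cnr{conj fat min} for $H = K_{n+1}$, thereby reducing the problem to a purely graph-theoretic statement: every graph with no $K_{n+1}$ minor has Assouad-Nagata dimension at most $n-1$. This reduction works because Assouad-Nagata dimension is a quasi-isometry invariant, in the spirit of \Or{invariance}. For small $n$ one would then attack the reduced statement using explicit structure theorems for the class (series-parallel decompositions for $n=3$, Wagner's theorem for $n=4$), combined with the fact that Assouad-Nagata dimension behaves well under tree-decompositions of bounded adhesion and bounded-dimensional bags.

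An alternative, more direct approach is to mimic the greedy proof of Hadwiger's chromatic bound in the coarse setting: at each scale $s$, attempt to cover $X$ by sets of diameter $O(s)$ with multiplicity at most $n$ by an inductive/greedy construction; whenever the construction fails, localise the obstruction and use it to extract a $K$-fat $K_{n+1}$ minor with $K$ proportional to $s$, thereby contradicting the hypothesis. The delicate point is to keep the multiplicative constant between the scale $s$ and the diameter $cs$ independent of $s$, which is precisely what distinguishes Assouad-Nagata from ordinary asymptotic dimension.

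The main obstacle is that the desired conclusion is strictly stronger than Hadwiger's conjecture, since we are asking for a geometric dimension bound rather than merely a chromatic one. As Hadwiger's conjecture is open classically for $n \geq 7$, unconditional progress for large $n$ appears unlikely; the most realistic targets are $n=3$ and $n=4$, where the structure of $K_{n+1}$-minor-free graphs is explicit, but even there the step from the asymptotic dimension bound $\asdim \leq 2$ (known for every proper minor-closed class by Bonamy et al.) to a genuine Assouad-Nagata bound requires careful uniformisation of the covers across scales, and this is where the bulk of the technical work should lie.
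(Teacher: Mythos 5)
The statement you are asked about is a \emph{conjecture}; the paper offers no proof of it, and your proposal is not one either --- it is a strategy sketch whose central step is itself an open (indeed, now refuted in general) conjecture. Your base cases are fine and are exactly the cases the paper knows: $n=1$ is the bounded-diameter observation, and $n=2$ follows from \Tr{triangle-free} plus the fact that trees have Assouad-Nagata dimension $1$. But the inductive step collapses at its first move. You propose to ``first establish \Cnr{conj fat min} for $H=K_{n+1}$'', which is precisely the paper's main open problem; it is only known for a handful of small graphs ($K_2$, $K_3$, $K_4^-$, $K_{2,3}$, stars, and --- per the paper's final section --- $K_4$), and the general form of \Cnr{conj fat min} has since been \emph{disproved} by Davies, Hickingbotham, Illingworth and McCarty. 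So the reduction you lean on is not a tool you may invoke; it is the harder question. (If that reduction were available, the rest would be easy: the paper notes that Distel and Liu have already shown $\andim\le 2$ for genuinely $K_{n+1}$-minor-free graphs, so your proposed case analysis via series-parallel decompositions and Wagner's theorem is not where the difficulty lies.)

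Two further points. First, your ``alternative, more direct approach'' --- greedily building an $(n,s)$-disjoint, $O(s)$-bounded cover and extracting a $K$-fat $K_{n+1}$ minor from any failure --- is the right spirit (it is how the $n=2$ case is actually proved in \Tr{triangle-free}, where the failure of Claim~1 or Claim~2 yields a fat $K_3$), but you give no mechanism for producing $n+1$ pairwise far-apart branch sets with $\binom{n+1}{2}$ mutually far branch paths from a local covering failure; for $n\ge 3$ this is exactly the hard combinatorial content, comparable in difficulty to the coarse Menger conjecture which the paper only handles for $n=2$. Second, your remark that the conclusion is ``strictly stronger than Hadwiger's conjecture'' is wrong: any finite graph has $\andim=0$, so \Cnr{coa Had} says nothing about finite graphs and does not imply the chromatic Hadwiger conjecture --- it is an analogue, not a strengthening. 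In short: the proposal correctly records the two known cases but contains no viable route to the general statement.
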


Since $\andim(X)$ can be defined via colouring (\Sr{sec asdim})---and $\andim(X)=n-1$ means that $n$ colours suffice--- \Cnr{coa Had}  can be thought of as the coarse version of the Hadwiger conjecture, perhaps the oldest well-known open problem of graph theory, which asserts that every graph with no $K_{n+1}$ minor is $n$--colourable. If we drop the word `asymptotic' in the  statement of \Cnr{coa Had}, then a better bound  $\andim(X)\leq 2$ was recently obtained by Distel \cite{DisPro} and by Liu \cite{LiuAss}.  In fact 2 might be the right upper bound in \Cnr{coa Had} as well.  A somewhat related result relaxes the Hadwiger conjecture by allowing monochromatic components of bounded size \cite{DEMW}.

Apart from its theoretic interest due to the connection with  the Hadwiger conjecture, \Cnr{coa Had} is also interesting from an algorithmic perspective. Given the theoretical and algorithmic importance of minor-closed graph classes, one hopes that the graphs we encounter in practice come from such a class. This is however rarely the case, because e.g.\ a little bit of randomness will result in arbitrarily large cliques. Thus it is natural to seek extensions of the theory of graph minors to a larger setting. This has been a mainstream trend in computer science, see e.g.\ \cite{GrKrSiDec} and references therein. \Cnr{coa Had}  pursues something similar in a different direction. The kind of practical problems we have in mind are those typically encountered in computational geometry, e.g.\ the Traveling Salesman Problem (TSP). Computer scientists have developed a notion called \defi{sparse partition schemes}, and have shown how these can be used to solve important practical problems such as the Universal Steiner Tree and the Universal TSP problem. The definition of sparse partition schemes is a slightly more quantitative version of the colourings used to define the Assouad-Nagata dimension (\Sr{sec asdim}); see \cite{BBEGLPS} and references therein for the precise relationship and further applications).
Recall that the presence of large cliques in real-world graphs is an obstruction to using algorithms coming from graph minor theory. Our coarse approach overcomes such obstructions: a clique of any size has diameter 1, and so it looks just like a point when seen from far away. 

In the hope to develop more basic tools towards \Cnr{conj fat min} and \Cnr{coa Had}, we considered the following coarse version of Menger's theorem:

\begin{conjecture}[Coarse Menger Theorem] \label{conj MM}
For every $\nin$, there is a constant $C_n$, \st\ the following holds \fe\ graph $G$ and every two subsets  $A,Z$ of its vertex set. \Fe\ $r\in \Z$, either there is a set of $n$ \pths{A}{Z}\ in \g at distance at least  $r$ from each other, 
or there is a set $S$ of less than $n$ vertices of \G, \st\ removing the ball of radius $C_n r$ around $S$ separates $A$ from $Z$ in \G.
\end{conjecture}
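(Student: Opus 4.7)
My plan is to proceed by induction on $n$. The base case $n=1$ is immediate: either $A$ and $Z$ lie in a common component of $G$ (yielding one path, which vacuously satisfies the pairwise-distance condition) or the empty separator $S=\emptyset$ already works. For the inductive step, I would first apply the induction hypothesis to extract $n-1$ paths $P_1,\ldots,P_{n-1}$ at pairwise distance at least $r$; if instead the hypothesis returns a coarse separator of fewer than $n-1$ vertices, that same set already witnesses the $n$-case, provided we allow $C_n \geq C_{n-1}$.

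With the $n-1$ paths in hand, I would try to add an $n$-th path at distance at least $r$ from all of them. To make this a well-defined optimisation, I would pass to a contracted auxiliary graph $G'$ obtained from $G$ by collapsing the $r/2$-neighbourhood of each $P_i$ to a single vertex $p_i$, and then apply the classical Menger theorem in $G'$ between $A \setminus \bigcup_i N_{r/2}(P_i)$ and $Z \setminus \bigcup_i N_{r/2}(P_i)$. If Menger yields a path in $G'$ avoiding all $p_i$, that path, suitably adjusted near its endpoints, lifts to the desired $n$-th far-away path in $G$ and the first alternative of the conjecture holds. Otherwise, Menger produces a small vertex separator $S'$ in $G'$, and the task becomes to convert $S'$ into a coarse separator in $G$.

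The main obstacle, and I suspect the reason the statement is offered as a conjecture rather than a theorem, lies in this last conversion step. A single vertex $p_i$ in $G'$ corresponds to an entire tubular neighbourhood of $P_i$ in $G$, which cannot be covered by a ball of bounded radius once $P_i$ is long. One would need to establish the following dichotomy: either the traffic between $A$ and $Z$ that exploits $P_i$ can be rerouted so as to yield a fresh path at distance at least $r$ from all other $P_j$ (contradicting the failure to find an $n$-th path), or else all such traffic is funnelled through a stretch of $P_i$ of length bounded in terms of $n$ and $r$, around whose midpoint a single ball of radius $C_n r$ suffices. Making this dichotomy quantitative, with $C_n$ depending only on $n$, is the heart of the problem; I would expect a pigeonhole-cum-rerouting argument indexed along the $P_i$, with $C_n$ growing rapidly (likely exponentially) in $n$. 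A complementary line of attack is to seek a genuine coarse max-flow/min-cut duality in the spirit of the quasi-tree and $K_{2,3}$ cases settled earlier in the paper, and transport it to this setting via an LP-duality or a compactness argument.
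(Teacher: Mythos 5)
There is a genuine gap here, and in fact one that cannot be repaired. The statement you are proving is stated in the paper as a \emph{conjecture}, not a theorem: the paper proves only the case $n=2$ (Theorem~\ref{Menger}), explicitly warns that even $n=3$ seems very difficult, and in its final section records that Nguyen, Scott \& Seymour have since found a counterexample to Conjecture~\ref{conj MM}. So the "main obstacle" you identify --- converting the separator of the contracted auxiliary graph back into a ball of radius $C_n r$ around fewer than $n$ vertices of $G$ --- is not a technical hurdle awaiting a clever pigeonhole argument; it is a step that provably cannot be carried out with $C_n$ depending on $n$ alone. Your own honest assessment that this conversion is "the heart of the problem" is correct, and the resolution is negative.

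Beyond that, the inductive scaffolding itself is flawed in a way independent of the counterexample. The dichotomy "either the family $P_1,\ldots,P_{n-1}$ extends to an $n$-th far-away path, or a small coarse separator exists" is invalid even in the classical ($r=0$) setting: a maximal family of disjoint paths that cannot be extended does not yield a small separator without an augmenting-path (or equivalent global) argument, and no coarse analogue of augmentation is supplied. There is also a quantitative slippage: a path in $G'$ avoiding all $p_i$ only guarantees distance greater than $r/2$, not $r$, from each $P_i$, and the contraction distorts distances between the surviving vertices, so "lifting" is not innocuous. For comparison, the paper's proof of the $n=2$ case takes an entirely different, non-inductive route: it fixes a single shortest \pth{A}{Z}\ $\gamma$, covers $\gamma$ by a minimal family of "maximal bridges" that surround every point, organises them into a perfect crossing sequence, merges nearby bridges into meta-bridges of bounded complexity, and only then extracts two far-apart paths (in one variant via classical Menger applied to an auxiliary graph of meta-bridges and intervals of $\gamma$). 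That machinery is tailored to $n=2$ and does not induct.
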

\pp{changed the footnote-I wonder if their gap was serious or not, or whether we should mention it-their constant now is better anyway. My initial proof had a gap too, that's why I wonder if they made the same mistake.}
In \Sr{sec Menger} we will prove the special case $n=2$ of this conjecture\footnote{An alternative proof was independently obtained by Albrechtsen et al.\ \cite{AHJKW}, who also state \Cnr{conj MM}. 
}  We warn the reader that \Cnr{conj MM} seems very difficult even for $n=3$. In \Sr{sec OP} we discuss related problems that might be more accessible.

\comment{
\begin{conjecture} \label{conj MM NPc}
Fix $k\in \N$, and let \g be a graph on $n$ vertices, and   $A,Z$  two subsets of its vertex set. 

there is a constant $C_n$, \st\ the following holds \fe\ graph $G$  \Fe\ $r\in \Z$, either there is a set of $n$ \pths{A}{Z}\ in \g at distance at least  $r$ from each other, 
or there is a set $S$ of less than $n$ vertices of \G, \st\ removing the ball of radius $C_n r$ around $S$ separates $A$ from $Z$ in \G.
\end{conjecture}
}

\section{Preliminaries}


\subsection{Metric spaces} \label{sec MS}

Let $(X,d)$ be a metric space. Given $x\in X$ and $R\in \R_+$, we let $\ball_x(R)$ denote the ball of radius $R$ around $x$.

For $Y,Z\subseteq X$ we define $d(Y,Z):= \inf_{y\in Y, z\in Z} d(y,z)$.

A \defi{length space} is a metric space $(X,d)$ \st\ for every $x,y\in X$ and $\epsilon >0$ there is an $x$--$y$~arc of length at most $d(x,y)+\epsilon$. Thus every geodesic metric space, in particular every graph endowed with its standard metric, is a length space.

An  \defi{$(M,A)$-quasi-isometry} between graphs $G=(V,E)$ and $H=(V',E')$ is a map\\ $f: V \to V'$ \st\ the following hold for fixed constants $M\geq 1, A\geq 0$:
\begin{enumerate}
\item \label{q i} $M^{-1} d(x,y) -A \leq d(f(x),f(y))\leq M d(x,y) +A $ \fe\ $x,y \in V$, and
\item \label{q ii}  \fe\ $z\in V'$ \ti\ $x\in V$ \st\ $d(z,f(x))\leq A$.
\end{enumerate}
Here $d(\cdot,\cdot)$ stands for the graph distance in the corresponding graph $G$ or $H$. We say that $G$ and $H$ are \defi{quasi-isometric}, if such a map $f$ exists. Quasi-isometries between arbitrary metric spaces are defined analogously. The following is well-known, see e.g.\ {\cite[Proposition~3.B.7(6)]{cornulier_metric_2016}}:

\begin{observation} \label{qi graph}
Every length space is quasi-isometric to a connected graph.
\end{observation}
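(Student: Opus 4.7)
The plan is to realise $H$ as the nerve-style graph of a maximal $1$-separated net in $X$. Concretely, use Zorn's lemma to pick a maximal $N\subseteq X$ \st\ $d(n,n')\geq 1$ for all distinct $n,n' \in N$. By maximality, for every $x\in X$ there is some $n \in N$ with $d(x,n)<1$ (otherwise $N\cup\{x\}$ would still be $1$-separated). Let $H$ be the graph with vertex set $N$ in which distinct $n,n'$ are adjacent exactly when $d(n,n')\leq 3$, and fix a map $f:X\to V(H)$ sending each $x$ to a chosen net point within distance $<1$ of it, with $f(n)=n$ for $n\in N$.

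Next I would prove connectedness of $H$ together with the upper quasi-isometry bound in one go, using the length-space property as the crucial ingredient. Given $x,y\in X$, pick an $x$--$y$ arc $\alpha$ of length at most $d(x,y)+1$; sample $\alpha$ at arclength intervals of size $1$ to obtain points $x=p_0,p_1,\dots,p_k=y$ with $k\leq d(x,y)+2$ and $d(p_i,p_{i+1})\leq 1$. Replace each $p_i$ by a net point $n_i\in N$ with $d(p_i,n_i)<1$, taking $n_0=f(x)$ and $n_k=f(y)$. Then $d(n_i,n_{i+1})<3$, so either $n_i=n_{i+1}$ or $n_in_{i+1}$ is an edge of $H$. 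This yields a walk in $H$ from $f(x)$ to $f(y)$ of length at most $k$, proving $H$ connected (apply this to $x,y\in N$) and giving $d_H(f(x),f(y))\leq d(x,y)+2$.

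For the opposite inequality, any path in $H$ of length $\ell$ from $f(x)$ to $f(y)$ witnesses $d(f(x),f(y))\leq 3\ell$ by the edge criterion, hence
\[ d(x,y)\leq d(x,f(x)) + d(f(x),f(y)) + d(f(y),y) \leq 3\, d_H(f(x),f(y)) + 2. \]
Thus condition (i) of the definition of quasi-isometry holds with $M=3$ and $A=3$, while condition (ii) is immediate since $f$ surjects onto $V(H)$.

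The construction is essentially routine; the only subtle choice is the edge threshold $3$, which must strictly exceed $1+1+1$ so that near-geodesic chains in $X$ actually lift to edge-paths in $H$. The main conceptual point is that the length-space hypothesis is indispensable for connectedness of $H$: for a general metric space such as $X=\{0\}\cup[2,\infty)\subset\R$, no graph with uniformly bounded edge-lengths built on a net can be connected.
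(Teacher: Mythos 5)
Your proof is correct and is essentially the paper's own argument: the paper's sketch takes an $\epsilon$-separated, $\epsilon$-dense net and joins net points at distance $<3\epsilon$, which is exactly your construction with $\epsilon=1$, and you have simply filled in the connectedness and distance estimates that the paper leaves implicit. (Only your closing side remark is off: for $X=\{0\}\cup[2,\infty)$ the threshold-$3$ graph on a net is in fact connected since $d(0,2)=2\leq 3$, and more generally any coarsely connected metric space is quasi-isometric to a connected graph once the edge threshold is chosen large enough, so the length-space hypothesis is convenient rather than indispensable.)
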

\begin{proof}[Proof (sketch).]
Given a length space $(X,d)$, fix $\epsilon>0$ and pick a subset $U$ \st\ $d(x,y)>\epsilon$ \fe\ $x,y\in U$ and $d(z,U)\leq \epsilon$ \fe\ $z\in X$. Define a graph on $U$ by joining any two points $x,y\in U$ satisfying $d(x,y)< 3\epsilon$ with an edge. 
\end{proof}

When $f: X \to Y$ between metric spaces satisfies \ref{q i} 
with $A=0$, then we say that $f$ has bounded \defi{multiplicative distortion}. When $M=1$, we say that $f$ has bounded \defi{additive distortion}. We observe that in many cases where $Y$ is not fixed, but allowed to vary inside a class of graphs, then a quasi-isometry can be turned into a map of bounded multiplicative distortion: 
\begin{observation} \label{obs distortion}
Let $\ch$ be a set of 2-connected graphs, and let $\cc=\forb{\ch}$ be the class of graphs that do not have any element of \ch\ as a minor. Then a graph $X$ is quasi-isometric to an element of \cc, \iff\ $X$ admits a map of bounded multiplicative distortion into an element of \cc. 
\end{observation}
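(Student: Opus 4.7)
The reverse implication is essentially automatic: given a bi-Lipschitz embedding $g: X \to Y'$ with $Y' \in \cc$, set $Y''$ to be the subgraph of $Y'$ consisting of $g(V(X))$ together with a chosen shortest $Y'$-path joining $g(x)$ to $g(y)$ for each edge $xy \in E(X)$. Then $Y'' \subseteq Y'$ still lies in $\cc$, and the bi-Lipschitz property of $g$ together with this construction makes $g: X \to Y''$ a quasi-isometry. So the substance lies in the forward direction.

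For the forward direction, let $f: X \to Y$ be an $(M,A)$-quasi-isometry with $Y \in \cc$, and for $v \in V(Y)$ write $B_v := f^{-1}(v)$. Applying the lower QI bound to two points of $B_v$ (both mapped to $v$) shows $\diam_X(B_v) \leq MA$. My plan is to build $Y'$ from $Y$ by attaching, at each $v \in V(Y)$, a collection of $\max(|B_v|-1, 0)$ new pendant vertices, each joined to $v$ by a fresh edge; I then define $g: V(X) \to V(Y')$ by picking, for each $v$, an arbitrary bijection from $B_v$ onto $\{v\}$ together with the new pendant vertices at $v$. The bounded multiplicative distortion of $g$ then reduces to two cases. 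If $f(x) = f(y) = v$, then $d_{Y'}(g(x), g(y)) \in \{1, 2\}$ while $1 \leq d_X(x,y) \leq MA$, so the ratio is trapped in a fixed interval. If $f(x) = v \neq w = f(y)$, the shortest path in $Y'$ from $g(x)$ to $g(y)$ must pass through the cut vertices $v$ and $w$, giving $d_{Y'}(g(x), g(y)) = d_{Y'}(g(x), v) + d_Y(v, w) + d_{Y'}(w, g(y))$ with the outer summands at most $1$; the QI bounds on $d_Y(v, w)$ in terms of $d_X(x, y)$, together with a short case split separating small $d_X(x, y)$ from large $d_X(x, y)$ so as to absorb the additive $A$, then yield uniform multiplicative constants depending only on $M$ and $A$.

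The only non-routine step, and the one that uses the hypothesis that $\ch$ consists of $2$-connected graphs, is verifying that $Y' \in \cc$. I would invoke the standard block-decomposition principle: for a $2$-connected graph $H$ with $|V(H)| \geq 3$, any $H$-minor of a graph is already an $H$-minor of some single block. The blocks of $Y'$ are precisely the blocks of $Y$ together with the newly added pendant edges, each of which is a $K_2$; no block of $Y$ has $H$ as a minor (since $Y \in \cc$), and a $K_2$ is too small to contain any $H \in \ch$ as a minor. This is the crux where $2$-connectedness is essential: without it (say if $P_3 \in \ch$), attaching pendant edges would immediately create new forbidden minors, and the construction would have to be replaced by something more delicate, for instance attaching longer pendant paths or more elaborate gadgets whose blocks themselves avoid $\ch$.
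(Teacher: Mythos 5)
Your proof is correct and follows essentially the same route as the paper: attach pendant structures at each vertex of the target graph so as to separate the points of each fibre $f^{-1}(v)$, and use the $2$-connectedness of the members of $\ch$ (via the block decomposition) to see that no new forbidden minors arise. The only cosmetic difference is that the paper subdivides the edges of the attached stars into paths of length $\lceil A\rceil$, which makes the resulting map immediately an $(M+3\lceil A\rceil,0)$-quasi-isometry, whereas you attach pendant edges of length $1$ and absorb the additive constant by a short case split on $d_X(x,y)$.
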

\begin{proof}
The converse direction is trivial. For the forward direction, let $f: X \to G$ be a (M,A)-quasi-isometry with $G\in \cc$. For every $v\in V(G)$, we attach the centre of a star $S_v$ with $|V(X)|$ leaves to $v$, and subdivide each edge of $S_v$ into a path of length $\ceil{A}$. Let $G'$ be the resulting graph, and notice that $G'\in \cc$ because the elements of \ch\ are 2-connected. 

Define a map $f': V(X) \to V(G')$ as follows. For each $v\in V(G)$, let $f'$ map each vertex in $f^{-1}(v)$ to a distinct leaf of $S_v$; since $S_v$ has $|V(X)|$ leaves, this is possible. It is straightforward to check that $f'$ is a $((M+3\ceil{A}),0)$-quasi-isometry. 
\end{proof}

\subsection{Asymptotic dimension} \label{sec asdim}
We recall one of the standard definitions of asymptotic dimension $\asdim(X)$ of a metric space $(X,d)$  from e.g.\ \cite{JorLanGeo}. The reader can think of $X$ as being a graph endowed with its graph distance $d$. See  \cite{BelDraAsy} for a survey of this rich topic.
 \smallskip
 
Let \cu\ be a collection of subsets of $(X,d)$ ---usually a cover. For $s > 0$, we say that \cu\ is \defi{$s$-disjoint}, if
$d(U,U') := \inf\ \{d(x, x') \mid x \in U, x' \in U'\} \geq s$
whenever $U,U'\in \cu$ are distinct. More generally, we say that \cu\ is \defi{$(n+1,s)$-disjoint}, if $\cu = \bigcup_{i=1}^{n+1} \cu_i$ for subcollections $\cu_i$ each of which is $s$-disjoint. The indices $1,\ldots,n+1$ are the \defi{colours} of \cu. We define the \defi{asymptotic dimension} $\asdim(X)$ as the smallest $n$ \st\ \fe\ $s\in \R_+$ \ti\ an $(n+1,s)$-disjoint cover \cu\ of $X$ with $\sup_{U\in \cu} \diam(U)< \infty$. Here, the diameter $\diam(U)$ is measured \wrt\ the metric $d$ of $X$. We say that \cu\ is \defi{$D$-bounded} for any $D> \sup_{U\in \cu} \diam(U)$. 

The \defi{Assouad-Nagata dimension} $\andim(X)$ is defined by putting a stronger restriction on the diameters: we define $\andim(X)$  to be the smallest $n$ for which there is $c\in \R$ \st\ \fe\ $s\in \R_+$ \ti\ an $(n+1,s)$-disjoint and $cs$-bounded cover \cu\ of $X$.

\subsection{Fat minors} \label{sec FM}

\begin{Def}[Fat minors] \label{def fat}
Let $(X,d)$ be a metric space and $\Delta$ a graph. A $K$-\defi{fat} $\Delta$-\defi{minor} of $X$ is a collection of disjoint connected subspaces $B_v, v\in V(\Delta)$ of $X$ (called \defi{branch-sets}), 
and disjoint arcs $P_e,e\in E(\Delta)$ (called \defi{branch-paths}) such that:
\begin{enumerate}
\item \label{fm i} Each  $P_{uw}$ intersects $\bigcup_{v\in V(\Delta)} B_v$ exactly at its two endpoints, which lie in $B_u$ and $B_w$.

\item \label{fm ii} For any $Y,Z \in \{ B_v \mid v\in V(\Delta) \}\cup  \{P_e \mid e\in E(\Delta)\}$, we have $d(Y,Z)\geq K$, unless $Y=Z$, or $Y=P_{vw}$ and $Z=B_v$ for some $v,w\in V(\Delta)$, or vice versa.
\end{enumerate}
\end{Def}

\begin{definition}[Asymptotic minors] \label{def asym}
Let $\Delta$ be a finite graph. 
We say that $X$ has $\Delta$ as an \defi{asymptotic  minor}, and write $\Delta \prec^\infty X$,  if $X$ has $\Delta $ as a $K$-fat minor  for all $K>0$. \end{definition}

\begin{observation} \label{invariance}
The property of having a fixed finite graph $H$ as an \asmi\ is preserved under quasi-isometries between length spaces.
\end{observation}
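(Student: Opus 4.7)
Let $f\colon X\to Y$ be an $(M,A)$-quasi-isometry between length spaces, and suppose $X$ has $H$ as an asymptotic minor. The plan is, given $K>0$, to pick $K'$ large enough (linearly in $K$ with constants depending only on $M$, $A$), take a $K'$-fat $H$ minor of $X$ with branch sets $\{B_v\}_{v\in V(H)}$ and branch paths $\{P_e\}_{e\in E(H)}$, and transport it to a $K$-fat $H$ minor of $Y$.

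For each branch set $B_v$, I will choose a $1$-dense subset $N_v\subseteq B_v$, and for every pair $x,y\in N_v$ with $d_X(x,y)\leq 3$ I will join $f(x)$ and $f(y)$ by an arc in $Y$ of length at most $3M+A+1$. The union of these arcs is a path-connected subset $B'_v\subseteq Y$ lying inside a $C$-neighbourhood of $f(B_v)$, for a constant $C=C(M,A)$. For each branch path $P_e=P_{uw}$, parametrised as a continuous map $[0,1]\to X$, I will take a fine subdivision with consecutive points at distance $\leq 1$, join consecutive $f$-images in $Y$ by short arcs, and then extract an arc $P'_e$ from the resulting continuous curve from $f(P_e(0))$ to $f(P_e(1))$ using the standard fact that any topological path in a Hausdorff space contains an arc between its endpoints. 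Finally, I will truncate each $P'_e$ at its first entry into $B'_u$ and last exit from $B'_w$, so that its interior is disjoint from $\bigcup_v B'_v$.

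The fatness check is then a direct distance estimate: for any transported pieces $Z,W$ required to be $K$-far apart by item \ref{fm ii} of \Dr{def fat}, their originals $Z^\circ,W^\circ$ satisfy $d_X(Z^\circ,W^\circ)\geq K'$, and since each transported piece lies within distance $C$ of the $f$-image of its original,
\[
d_Y(Z,W)\;\geq\;M^{-1}\,d_X(Z^\circ,W^\circ)-A-2C\;\geq\;K
\]
provided $K'\geq M(K+A+2C)$. In particular distinct transported pieces that ought to be separated are disjoint, and the truncation step arranges that each $P'_e$ meets $\bigcup_v B'_v$ exactly at its two endpoints, yielding a $K$-fat $H$ minor of $Y$.

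The main obstacle I foresee is not the distance arithmetic but the combinatorial tidying needed to honour condition \ref{fm i} of \Dr{def fat}: specifically, one must verify that each truncated $P'_e$ meets only $B'_u$ and $B'_w$ among the $B'_{v'}$'s --- this follows because $P_e$ is $K'$-far from every non-incident $B_{v'}$ in $X$, hence $P'_e$ is at positive distance from the corresponding $B'_{v'}$ in $Y$ by the same estimate --- and that the first-entry and last-exit points used in the truncation actually exist, which follows from continuity of $P'_e$ together with the fact that its endpoints $f(P_e(0))$ and $f(P_e(1))$ lie in $B'_u$ and $B'_w$ by construction. These verifications are routine, but force a careful choice of the nets $N_v$ and of the subdivisions of the $P_e$'s in order to ensure that the transported pieces are genuinely arcs and connected subspaces, not just sets.
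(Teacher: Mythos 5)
The paper states \Or{invariance} without proof, treating it as folklore, so there is nothing to compare against line by line; your plan is a correct fleshing-out of the standard argument (discretise, push forward, re-connect with short arcs using the length-space property, and do the distance arithmetic with a multiplicative loss absorbed into the choice of $K'$). The connectivity of each $B'_v$ does hold as you claim: if the ``distance $\le 3$'' graph on a $1$-dense net of the connected set $B_v$ were disconnected, the $3/2$-neighbourhoods of the two parts would disconnect $B_v$. Three small points to tighten. First, your truncation is stated backwards: to make the interior of $P'_e$ avoid $\bigcup_v B'_v$ you want the subarc from the \emph{last} exit from $B'_u$ to the \emph{first} subsequent entry into $B'_w$, not the other way around. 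Second, for those first/last points to exist you should either take $B'_v$ closed (e.g.\ a finite union of compact arcs, which you can arrange by using a net that is finite on each bounded piece, or by simply taking closures) or phrase the truncation via suprema of parameter values; and you should put the endpoints $P_e(0),P_e(1)$ into the nets $N_u,N_w$ so that $f(P_e(0))\in B'_u$. Third, note that \Dr{def fat}\ref{fm ii} demands distance $\ge K$ even between two branch paths sharing an endvertex of $H$ and between two branch sets joined by an edge of $H$; your uniform estimate $d_Y(Z,W)\ge M^{-1}K'-A-2C$ covers all such pairs, so this is fine, but it is worth saying explicitly that no case analysis on adjacency is needed. With these adjustments the argument is complete.
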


In fact, \asmi s are preserved under the more general \defi{coarse embeddings} as defined in \cite{GroAsyInv,Ostrovskii}, but we will not use this fact here.

\comment{
\section{A Coarse Ramsey Theorem}

\begin{lemma} \label{ramsey}
(Coarse Ramsey Theorem)  Let $(X,d)$ be a metric space, and $U\subset X$ infinite. Then \fe\ $R\in \R_+$, there is either\\
A) an infinite $U'\subset U$ \st\ $d(u,v)>R$ \fe\ $u,v\in U'$, or \\
B) a ball of radius $R$ in $X$ containing all but finitely many elements of $U$.
\end{lemma}
\begin{proof}
If B) fails, then we can construct $U'$ greedily, by picking points $v_i$ of $U$ one by one, and removing the balls $B_{v_i}(R)$ from $U$.
\end{proof}
 
\begin{remark}

\end{remark}
}

\section{Manning's theorem --- quasi-trees} \label{quasitree}

Manning's theorem  \cite{Manning} says that a graph is quasi-isometric to a tree \iff\ it has the following \defi{$\delta$-\defi{Bottleneck Property}} for some $\delta\in \R_+$:

\begin{Def}
We say that a metric space $X$ has the $\delta$-\defi{Bottleneck Property},  if for all $x, y \in X$  there is a `midpoint' $m = m(x,y)$ with $d(x,m) = d(y,m) = d(x,y)/2$ such that any path from $x$ to $y$ is at distance less than $\delta$ from $m$.
\end{Def}

Our next result contains Manning's theorem, and at the same time proves the case $H=K_3$ of \Cnr{conj fat min}:

\begin{theorem} \label{triangle-free}
The following are equivalent for every graph $G$
\begin{enumerate}
\item \label{tf i} $G$ satisfies the $\delta$-Bottleneck Property for some $\delta>0$;
\item \label{tf ii} $G$ has no $K$-fat $K_3$-minor for some $K>0$; and
\item \label{tf iii} $G$ is quasi-isometric to a tree.
\end{enumerate}
\end{theorem}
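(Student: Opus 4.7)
The plan is to establish the equivalence via the cycle (iii) $\Rightarrow$ (ii) $\Rightarrow$ (i) $\Rightarrow$ (iii). The first two implications are short consequences of the definitions and of \Or{invariance}; the third is Manning's theorem and is where I expect the main difficulty to lie.

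For (iii) $\Rightarrow$ (ii): a tree has no $K_3$ as a topological minor and hence no $K_3$-minor at all, so it has no $K$-fat $K_3$-minor for any $K>0$. Since having a fixed finite graph as an \asmi\ is preserved under quasi-isometries between length spaces (\Or{invariance}), a graph quasi-isometric to a tree must also lack $K_3$ as an \asmi, meaning there is some $K_0>0$ for which $G$ has no $K_0$-fat $K_3$-minor, giving (ii).

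For (ii) $\Rightarrow$ (i) I would argue by contraposition. Fix $K>0$ and suppose the $\delta$-Bottleneck Property fails for $\delta = 10K$ (or any sufficiently large multiple of $K$). Then there are vertices $x,y$, necessarily with $d(x,y)\geq 2\delta$ since shorter pairs are bottlenecked trivially, such that no midpoint has the bottleneck property. Fix a geodesic $\gamma$ from $x$ to $y$, let $m$ be its midpoint, and let $P$ be a path from $x$ to $y$ with $d(m,P)\geq\delta$. Out of $\gamma$ and $P$ I would build a $K$-fat $K_3$-minor: take small connected branch sets $B_x$, $B_y$, $B_m$ around $x$, $y$, and $m$ respectively, and use the two halves of $\gamma$ together with $P$ as branch paths. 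A routine but necessary cleanup is to choose $B_x$ and $B_y$ to be short arcs rather than single points so that $P$ and $\gamma$ can leave them along distinct endpoints, and to take $B_m$ to be a short subarc of $\gamma$ through $m$ for the analogous reason on the geodesic side; with these choices the branch paths are genuinely pairwise disjoint. The pairwise distance inequalities then follow from $\gamma$ being a geodesic and from $d(m,P)\geq\delta\gg K$.

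The main obstacle is (i) $\Rightarrow$ (iii), which is Manning's theorem; the paper promises a shorter proof of this. My plan is to fix a basepoint $o\in V(G)$ and, for each vertex $v$, to fix a geodesic $\gamma_v$ from $o$ to $v$. The bottleneck property should force any two such geodesics $\gamma_u,\gamma_v$ to fellow-travel along a common initial segment of length roughly the Gromov product $(u|v)_o$, up to an additive error depending only on $\delta$: a large divergence between $\gamma_u$ and $\gamma_v$, combined with any geodesic between late points of $\gamma_u$ and $\gamma_v$, would violate the bottleneck property at some midpoint. Quotienting $\bigcup_v \gamma_v$ by identifying points that lie at comparable depths from $o$ and within distance $O(\delta)$ should produce an honest tree, and the projection is a quasi-isometry because both the identifications and the geodesic fellow-travelling are controlled by constants depending only on $\delta$. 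The delicate step is verifying that all cycles in the quotient really are collapsed, not just locally but globally; this is where I would either work harder, or else invoke Manning's original argument as a black box to close the cycle.
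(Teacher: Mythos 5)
Your cycle runs in the opposite direction from the paper's ((iii)$\to$(ii)$\to$(i)$\to$(iii) rather than (i)$\to$(ii)$\to$(iii)$\to$(i)), so the two implications you actually have to prove are not the ones the paper proves, and both of yours have genuine gaps. In (ii)$\to$(i), the construction of a $K$-fat $K_3$-minor from a failure of the Bottleneck Property does not work as described. First, the hypothesis $d(m,P)\geq\delta$ gives no control whatsoever over the distance from $P$ to the rest of $\gamma$: outside the ball of radius $\delta$ around $m$, the path $P$ may run along $\gamma$ or repeatedly intersect the two geodesic halves. So $P$ cannot serve as the branch path for the edge $xy$: the requirement $d(P_{xy},P_{xm})\geq K$ from condition \ref{fm ii} of \Dr{def fat} can fail outright, and your claim that the distance inequalities "follow from $\gamma$ being a geodesic and from $d(m,P)\geq\delta$" is false for this pair. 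One must first perform surgery on $P$ (take the last point of $P$ within distance $K$ of the first half of $\gamma$, the first point after it within distance $K$ of the second half, and reattach by short legs) --- exactly the kind of bridge argument the paper carries out in its Claim~2 and again in \Sr{sec Menger}. Second, your "short arcs" fix for the branch sets addresses disjointness but not fatness: two distinct branch paths must be $K$-apart \emph{even when incident with a common branch set}, and since each has an endpoint inside that set, every branch set at a vertex of degree $\geq 2$ must have diameter at least $K$; one must then separately verify that the two paths leaving, say, $B_x$ from its two ends stay $K$-apart, which the paper does via a concatenation-of-lengths argument.

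The implication (i)$\to$(iii) is Manning's theorem, and it is where the real work lies --- but the paper avoids proving it directly: it instead proves (ii)$\to$(iii) by partitioning the metric spheres $S_n$ into $5K$-near-components, showing each has diameter at most $10K$ and a unique parent in the previous sphere (any violation yields a $K$-fat $K_3$-minor), and contracting these components to obtain a $(1,10K)$-quasi-isometry to a tree; Manning's theorem then falls out as a corollary of the cycle. Your sketch of (i)$\to$(iii) stops exactly at the hard point (verifying that the quotient of the geodesic spider is globally a tree), and your fallback of citing Manning as a black box, while logically admissible, defeats the stated purpose of giving a self-contained proof --- and even granting Manning, the gap in your (ii)$\to$(i) leaves the equivalence unproved.
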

\noindent {\bf Remark 1:} In this formulation our statement only makes sense for graphs $G$ of infinite diameter, but our proof yields explicit relations between the constants involved that make it meaningful for finite $G$ as well. \\
{\bf Remark 2:} Kr\"on \& M\"oller \cite{KroMolQua} provide an alternative characterisation of quasi-trees (for graphs $G$ of infinite diameter). Kerr \cite{KerTre} provides another proof of Manning's theorem, with multiplicative constant 1 for the quasi-isometry. Similar results had previously been obtained by Chepoi et al.\ \cite{CDNRV}. Our proof of \Tr{triangle-free} in fact also gives Kerr's theorem (with a different proof), as we will obtain a $(1,10K)$-quasi-isometry to a tree \eqref{fourteen}.  In other words, \ref{tf i}--\ref{tf iii} of \Tr{triangle-free} are equivalent to admitting an embedding into a tree with bounded additive distortion. 

\medskip
We will use the following terminology. Let $(X,d)$ be a metric space. We say that $C\subseteq X$ is \defi{$M$-near-connected} for $M\in \R_+$, if for every $x,y\in C$, there is a sequence $x=x_0,x_1, \ldots, x_k=y$ of points of $C$ such that $d(x_i,x_{i+1})\leq M$ \fe\ $i<k$. Such a sequence $P=\{x_i\}$ will be called an $M$-\defi{near path} from $x$ to $y$. 
An \defi{$M$-\ncm} is a maximal subset of $X$ that is $M$-near-connected. 

Our method below bears some similarity with \cite{CDNRV}, but our use of $M$-\ncm s instead of components in a layered partition is an essential difference. 

\begin{proof}[Proof of \Tr{triangle-free}]
\medskip
$\bullet$ \ref{tf i} $\to$ \ref{tf ii}:\\
Suppose that $G$ has a $2\delta$-fat $K_3$-minor $M$ with branch sets $B_1,B_2,B_3$ and paths $P_{12},P_{23}, P_{31}$. Let $x\in B_1, y\in B_2$ be the two endvertices of $P_{12}$. We may assume \obda\ that 
\labequ{dxB2}{d(x,B_2)=2\delta.}
Indeed, if \eqref{dxB2} fails, then let $x'$ be the last vertex along $P_{12}$ as we move from $x$ to $y$ \st\ $d(xP_{12}x',B_2)=2\delta$, where $xP_{12}x'$ denotes the subpath of $P_{12}$ from $x$ to $x'$. Such an $x'$ always exists since $d(x,B_2)\geq 2\delta$ and $y\in B_2$. Easily, $d(x',B_2)=2\delta$. We can modify $M$ by adding $xP_{12}x'$ to $B_1$ and replacing $P_{12}$ by $x' P_{12}y$, to obtain a new $K_3$-minor $M'$ of $G$ which satisfies \eqref{dxB2} by the choice of $x'$. It is straightforward to check that $M'$ is still $2\delta$-fat. 

Next, we claim that we may assume each $B_i$ to be a path with end-vertices in $\bigcup_{j\neq i} P_{ij}$. Indeed, each $B_i$ contains such a path $P_i$ by the definitions, and discarding $B_i\sm P_i$ cannot reduce the fatness of $M$. This means that the vertices and edges of $G$ in the branch sets and paths of $M$ form a cycle $C$ of $G$.

Let $\gamma$ be a $x$--$B_2$~geodesic,  let $m$ be its midpoint, and let $b\in B_2$ be its final vertex. Thus $d(m,B_2)=\delta$ by  \eqref{dxB2}. Let $C_1$ be the subpath of $C$ from $x$ to $b$ containing $P_{12}$, and let $C_2$ be the other subpath of $C$ from $x$ to $b$.

As we are assuming \ref{tf i}, there is a point $z_1$ in $C_1$ \st\ $d(z_1,m)<\delta$, and a point $z_2$ in $C_2$ \st\ $d(z_2,m)<\delta$. By the triangle inequality we thus have
\labequ{zi}{d(z_1,z_2)<2\delta.}
Likewise, since $d(m,B_2)=\delta$, we also obtain 
\labtequ{ziB2}{$d(z_i,B_2)<2\delta$ and $d(z_i,B_2)>2\delta-\delta=\delta$ for $i=1,2$.}
In particular, neither of $z_1,z_2$ lies in $B_2$. Since all of  $B_1,B_3$ and $P_{13}$ are at distance at least $2\delta$ from $B_2$, we must then have $z_1\in P_{12}$ and $z_2\in P_{23}$. But combined with \eqref{zi} this implies $d(P_{12},P_{23})<2\delta$, contradicting that $M$ is $2\delta$-fat.
\bigskip

$\bullet$ \ref{tf ii} $\to$ \ref{tf iii}:

Fix a `root' $o\in V(G)$, and let $S_n:= \{ v\in V(G) \mid d(v,o)=n \}$ be the sphere at distance $n\in \N$. 

Let $\cc_n$ denote the set of  $5K$-near-components of $S_n$, where we endow $S_n$ with the metric $d=d_G$ inherited from $G$.\\

{\bf Claim 1}: Each $C\in \cc_n$ has diameter at most $D=D(K):= 10K$ \fe\ $n\in \N$.\\

If $n\leq D/2$ this is obvious, so assume $n> D/2$ from now on. \mymargin{This paragraph has been rewritten.}
Suppose, to the contrary, there is $C\in \cc_n$ and $x_1,x_2\in C$ with $d(x_1,x_2)>D$. Let $P$ be a \pth{x_1}{x_2}\ in \g such that %
$d(P,S_n)\leq 5K/2$, 
which exists by the definition of a $5K$-near-component and the fact that $C \subseteq S_n$. 
Let $\pi_{x_i o}$  be a geodesic from $x_i$ to $o$ for $i=1,2$, and let $B_i$ be the component of $x_i$ in $(\pi_{x_i o} \cup P) \cap B(x_i,7K/2)$, 
 where $B(x,R)$ denotes the ball of radius $R$ with center $x$ in $G$. Let $P_{x_i o}$ be the subarc of $\pi_{x_i o}$ of length $K$ starting at the last point of $B_i$ (which point is an interior point of an edge unless $K$ is an even integer). 
Let $P_{x_1 x_2}$ be a subpath of $P$ joining $B_1$ to $B_2$. Note that $d(P_{x_i o}, S_n)=  7K/2$, and therefore 
$d(P_{x_1 x_2}, P_{x_i o})\geq K$ by the triangle inequality because $d(P,S_n)\leq 5K/2$. 
Let $B_o= B(o,n-9K/2)$, and note that this implies $d(B_o, P)\geq 2K$ and $d(B_o, B_i)\geq K$ for similar reasons.  We claim that $d(P_{x_1 o}, P_{x_2 o})\geq K$. Indeed, if not, then let $Q$ be a \pth{P_{x_1 o}}{P_{x_2 o}} of length less than $K$. Then by concatenating subpaths of the five paths $(\pi_{x_1 o} \cap B_1), P_{x_1 o}, Q, P_{x_2 o}, (\pi_{x_2 o} \cap B_2) $ we obtain an $\pth{x}{y}$ of length less than $10K$, contradicting our assumption. 

It is now  straightforward to check, using the above inequalities, that the above paths form a $K$-fat $K_3$-minor of \g with branch sets $B_1,B_2,B_o$. This contradicts our assumption thus proving Claim~1.

\bigskip

Define a tree $T$ as follows. The vertex set  $V(T):= \bigcup_n  \bigcup \cc_n$ is the set of all $5K$-near-components of all $S_n$.
We put an edge between $C\in \cc_i$ and $C' \in \cc_j$ in $T$ whenever there is an edge of $G$ between $C$ and $C'$. We can think of $\cc_0$, which is just a singleton $\{\{o\}\}$, as the root of $T$. Notice that there is no edge of $T$ between distinct elements of $\cc_i$ by the definition of $M$-near-component. The fact that $T$ is a tree thus follows from\\

{\bf Claim 2}: \Fe\ $\nin$ and every $C\in \cc_{n+1}$, there is exactly one $C'\in \cc_n$ such that $CC' \in E(T)$. \\

Indeed, suppose there are $C'\neq C''\in \cc_n$ connected to $C$ by an edge. In this case, there is a path $A$ in \g \st\  the endvertices $a,z$ of $A$ lie in distinct elements of $\cc_n$ (which elements are not necessarily $C'$ and $C''$), and the interior of $A$ is disjoint from $\bigcup_{i\leq n} \bigcup \cc_i$. To see this, let $e$ be an edge of \g joining $C'$ to $C$, let $f$ be an edge of \g joining $C''$ to $C$, and let $A'$ be a path in \g joining the endvertices of $e$ and $f$ lying in $C$ \st\ $A'$ is at distance at most $5K$ from $C$; such an $A'$ exists since $C$ is $5K$-near-connected. It is easy to see that the concatenation $e A' f$ contains a subpath $A$ with the desired properties.

Note that 
\labtequ{az}{$d(a,z)>5K$}
since $a,z$ lie in distinct $5K$-near-components. Let $\pi_{a o}$ and $\pi_{z o}$ be a geodesic from $a$ to $o$  and a geodesic from $z$ to $o$ respectively.  Let $Z$ be an \pth{a}{z} contained in their union $\pi_{a o} \cup \pi_{z o}$.

We construct a $K$-fat $K_3$-minor in \G\ as follows.
Let $B_a$ be the initial subpath of $\pi_{a o}$ of length $2K$ and let $B_z$ be the initial subpath of $\pi_{z o}$ of length $2K$. Notice that $d(B_a,B_z) > K$ by \eqref{az}. Let $P_{az}= Z \sm \{B_a \cup B_z\}$. Let $p$ be the last point along $A$, as we move from $a$ to $z$, \st\ $d(p,B_a)\leq K$ holds, and let $q$ be the first point along $A$ after $p$ \st\ $d(q,B_z)\leq K$ holds. Let $B_w$ be the subpath of $A$ between $p$ and $q$. Easily, $d(B_w,B_a), d(B_w,B_z)\geq K$ hold. Let $P_{aw}$ be a shortest $p$--$B_a$~path in $G$, and let $P_{wz}$ be a shortest $q$--$B_z$~path in $G$. Note that $|P_{aw}|=|P_{wz}|=K$. 

We claim that $d(P_{aw},P_{wz}) \geq K$. To see this, suppose there is a \pth{P_{aw}}{P_{wz}} $Q$ with $|Q|<K$. Let $Q_a$ be the subpath of $\pi_{a o}$ from $a$ to $P_{aw}$, and notice that $|Q_a|\leq K$ because $P_{aw}$ starts at $S_{n+1}$ and has length $K$, and therefore cannot meet a point of $\pi_{a o}$ at distance $K$ or more  from $a\in S_n$. We define $Q_z$ similarly, as a subpath of $\pi_{z o}$. \mymargin{This paragraph has been rewritten.} Thus by concatenating subpaths of the five paths $Q_a, P_{aw}, Q, P_{wz}, Q_z$ we obtain an \pth{a}{z}\ of length less than $5K$, contradicting \eqref{az}. This proves $d(P_{aw},P_{wz}) \geq K$. Similarly, we have $d(P_{aw},B_z)\geq K$ and $d(P_{wz},B_a)\geq  K$. 


Finally, note that both $P_{aw},P_{wz}$ start at distance at least $n+1$ from $o$, 
while all of $P_{az}$ is within distance $n-2K$ from $o$. Combining the last two inequalities we deduce $d(P_{aw}, P_{az}), d(P_{wz}, P_{az})> K$. Similarly, we have $d(B_w, P_{az})> K$. 

Thus these sets form a   $K$-fat $K_3$-minor. 
\comment{Old wrong version: Pick a point $p$ on $A$ \st\ 
\labtequ{apz}{$d(a,p) = d(p,z)= \frac{d(a,z)}{2}>2K$.}
Let $Z$ be a \pth{a}{z} contained in their union $\pi_{a o} \cup \pi_{z o}$. By choosing three appropriate branch sets around $a, p$ and $z$ we can find a $K$-fat $K_3$-minor in $A \cup Z$ as follows. Let $B_a$ be the initial subpath of $\pi_{a o}$ of length $K$ and let $B_z$ be the initial subpath of $\pi_{z o}$ of length $K$. Let $a'$ be the last vertex of $A$, as we move from $a$ towards $z$, with $d(B_a,a')= K$ ---which exists by \eqref{apz}--- and similarly, let $z'$ be the first vertex of $A$ with $d(B_z,z')= K$. Then let $B_p$ be the subpath of $A$ from $a'$ to $z'$, and note that, by the choice of $a',z'$, we have 
\labtequ{Bp}{$d(B_p,B_a), d(B_p,B_z)\geq K$.}
Finally, let $P_{ap}$ and $P_{pz}$ be the two subpaths of $A \sm B_p$, and let $P_{az}= Z \sm \{B_a \cup B_z\}$. It is straightforward to check that these sets form a   $K$-fat $K_3$-minor using \eqref{az}, \eqref{Bp}, and the fact that $P_{az}$ is within distance $n-K$ from $o$ while $d(A,o)=n$ by the definitions.} 
This contradicts our assumption \ref{tf ii}, hence proving Claim~2.

\

Notice that  $T$ is obtained from $G$ by contracting disjoint sets (the above $5K$-near-components) into points. Since these sets have diameter bounded by $10K$ by Claim~1, the map $f: V(G) \to V(T)$ mapping each vertex to the $5K$-near-component it belongs to defines a quasi-isometry from $G$ to $T$. More precisely, we claim that 
\labtequ{fourteen}{$f$ is a $(1,10K)$-quasi-isometry from $G$ to $T$.}
Indeed, given $x,y\in V(G)$, let $P$ be an \pth{x}{y}\ in \g with length $d_G(x,y)$, and let $Q$ be the unique \pth{f(x)}{f(y)}\ in $T$. It is easy to see that $d_G(x,y)\geq d_T(f(x),f(y))$, because $f(P)$ is connected, hence it must contain all edges of $Q$. 

For the converse, let $Z$ be the vertex of $Q$ that is closest to the root $f(o)$ of $T$. Let $Q_1$ be the (possibly trivial) subpath of $Q$  from $f(x)$ to $Z$, and $Q_2$ the subpath  from $Z$  to $f(y)$. Let $\Pi_x$ (respectively,  $\Pi_y$) be the $x$--$o$ (resp.\ $y$--$o$) geodesic in \G. Let $z_x$ be the unique vertex of $\Pi_x \cap Z$, and $z_y$ the unique vertex of $\Pi_y \cap Z$. Let $S$ be a shortest \pth{z_x}{z_y}\ in \G, and let \defi{$||S||$} denote its length. Then $||S||\leq 10K$ by Claim 1. We can form an  \pth{x}{y}\ by concatenating $x \Pi_x z_x$ with $S$ and then $z_y \Pi_y y$. Notice that $||x \Pi_x z_x|| = ||Q_1|| $, and $||z_y \Pi_y y|| = ||Q_2||$ (where $x \Pi_x z_x$ stands for the $x$--$z_x$~subpath of $\Pi_x$). It follows that\\  
 $d(x,y)\leq 10K + ||Q|| = 10K + d_T(f(x),f(y))$ as desired.
\medskip

$\bullet$ The implication \ref{tf iii} $\to$ \ref{tf i} is obvious.
\end{proof}

\comment{
The \defi{$k$-fan} $\fan_k$ is the graph obtained from a path $P$ of $k$ vertices by adding a new vertex and joining it to \mymargin{new paragraph} each vertex of $P$ with an edge. By a straightforward modification of the above proof we will obtain the following fact 
\begin{corollary} \label{corol fan}
Let $G$ be a graph with no $K$-fat $\fan_k$-minor, and $M\in \N_*$. Then there is $D=D(K,k,M)$ such that each $MK$-near-component of any `sphere' $S_n, n\in \N$ (defined as above) has diameter at most $D$.
\end{corollary}
\begin{proof}
We argue as in the proof of Claim~1, except that we now pick a sequence $x_1,\ldots, x_k$ of points along the path $P$ with pairwise distance at least $2(M+2)K$, we let $B_o= B(o,n-(M+2)K)$, and let $B_i$ be the component of $x_i$ in\\ $(\pi_{x_i o} \cup P) \cap B(x_i,(M+1)K)$.
\end{proof}
}

\section{\Cnr{conj fat min} for stars} \label{sec stars}

In this section we establish the special case of \Cnr{conj fat min} where $H$ is a \defi{star}, i.e.\ $H=K_{1,m}$:

\begin{theorem} \label{thm star}
Let $X$ be a length space, and $m\in \N_{>0}$. Then $X$ has no asymptotic $K_{1,m}$ minor \iff\ $X$ is quasi-isometric to a graph with no $K_{1,m}$ minor.
\end{theorem}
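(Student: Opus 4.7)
The backward direction is immediate: if $X$ is quasi-isometric to a graph $G$ with no $K_{1,m}$ minor, then $G$ admits no $K$-fat $K_{1,m}$ minor for any $K > 0$ (since $K$-fatness only strengthens the minor condition), hence no asymptotic $K_{1,m}$ minor, and this property is inherited by $X$ via \Or{invariance}.

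For the forward direction, assume $X$ has no $K$-fat $K_{1,m}$ minor for some $K \in \N$. By \Or{qi graph} we may assume $X$ is a graph. The cases $m = 1, 2$ reduce to $X$ being bounded, hence quasi-isometric to a point. For $m \geq 3$, my plan is to construct a coarsening $G$ of $X$ at scale $R = \alpha K$ (for $\alpha = \alpha(m)$ large, chosen below) and prove that the resulting $G$ carries no $K_{1,m}$ minor at all. Concretely, I would pick a maximal $R$-separated subset $U \subseteq V(X)$ and let $G$ be the graph on $U$ with edges $uv$ whenever $d_X(u,v) \leq 3R$; the standard net argument used in the proof of \Or{qi graph} shows $G$ is quasi-isometric to $X$.

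The main step will be verifying that $G$ has no $K_{1,m}$ minor. I would proceed by contradiction: given such a minor in $G$, with branch sets $\widetilde B_0, \ldots, \widetilde B_m \subseteq U$ and branch paths $\widetilde P_1, \ldots, \widetilde P_m$, I aim to lift it to a $K$-fat $K_{1,m}$ minor in $X$, contradicting the hypothesis. To each $u \in U$ I associate its ``Voronoi core'' $V_u^\circ := \{p \in V(X) : d(p, u) + K \leq d(p, u') \text{ for all } u' \in U \setminus \{u\}\}$; a short triangle-inequality argument (adding the two defining inequalities for $p \in V_u^\circ$ and $q \in V_{u'}^\circ$ and using $d(p,u') \le d(p,q)+d(q,u')$) shows $d(V_u^\circ, V_{u'}^\circ) \geq K$ whenever $u \ne u'$. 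I then set $B_i := \bigcup_{u \in \widetilde B_i} V_u^\circ$, augmented by short witnessing $X$-paths realizing each edge of $G[\widetilde B_i]$, and lift each $\widetilde P_i$ analogously.

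The main obstacle is preserving $K$-fatness under this lift: although the cores $V_u^\circ$ themselves are $K$-separated, the short witnessing paths connecting cores within one branch set may stray close to cores belonging to another branch set, violating fatness. I expect this to be surmountable by taking $\alpha$ large enough and routing the witnessing paths through the ``gap'' regions between cores, which have width $\gtrsim R - 2K$; crucially, the star structure of $K_{1,m}$, in particular the absence of edges between the leaves, should allow the $\widetilde B_0$-to-$\widetilde B_i$ branch paths to be rerouted essentially independently. Making this routing rigorous for arbitrary $m$ --- in particular, controlling the mutual interactions between many branch paths simultaneously --- is the delicate technical heart of the proof, and is where I expect most of the work to lie.
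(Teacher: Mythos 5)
There is a genuine gap, and it sits exactly where you flag it: the claim that your net graph $G$ has no $K_{1,m}$ minor is false, and no rerouting of witnessing paths can repair it. Take $X$ to be a long metric cycle. Then $X$ has no $K$-fat $K_{1,3}$ minor for any $K$ --- indeed no $K_{1,3}$ minor at all, since a connected centre branch set is an arc of the circle and only two disjoint branch paths can emanate from an arc of a circle. But a maximal $R$-separated net in the cycle with consecutive points spaced, say, $1.5R$ apart is a legitimate choice, and then each net point is within $3R$ of its four nearest neighbours along the cycle, so $G$ has vertices of degree $4$ and contains $K_{1,3}$ as a subgraph. Hence the implication you need --- no fat $K_{1,m}$ minor in $X$ implies no $K_{1,m}$ minor in $G$ --- already fails for $m=3$ and every scale $\alpha$. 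Concretely, lifting the claw with centre $\{v\}$ and leaves $\{v-1\},\{v+1\},\{v+2\}$ forces the branch path realising the edge from $v$ to $v+2$ to pass through (or all the way around the circle to reach) the core of $v+1$: there is no ``gap region'' to route through, because the space is one-dimensional. Nor does the absence of leaf--leaf edges in $K_{1,m}$ help, since the offending proximity is between a branch \emph{path} and a leaf branch set it is not incident to. So the deferred ``technical heart'' is not technical; it is where the approach breaks.

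The paper's proof avoids this by not using a uniform net. It fixes a root $o$, partitions $X$ into annuli of width $4K+1$, and contracts the $K$-near-components of each annulus (after merging degenerate ``short'' components into ``super-boxes''); a pigeonhole argument, which itself manufactures a fat $K_{1,m}$ minor from $m$ geodesics to $o$, bounds the diameter of each piece by $O_m(K^2)$ and hence gives the quasi-isometry. Crucially, when a $K_{1,m}$ minor of the contracted graph is lifted back to $X$, the leaf branch sets are taken to be ``mid-points'' lying in the middle layer of their annulus --- hence automatically at distance $>2K$ from every other super-box --- and the branch paths are initial segments of geodesics from these mid-points to the centre branch set. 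This radial structure is what makes the lifted minor fat; your construction has no analogue of it, which is why the lifting step cannot be completed.
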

\begin{proof}
By \Or{qi graph} we may assume that $X$ is a graph. The backward implication is immediate from \Or{invariance}. We will prove the forward implication: we assume that $X$ does not have a $K$-\defi{fat} $K_{1,m}$ minor, and will construct a $K_{1,m}$-minor-free graph quasi-isometric to $X$. 

\medskip
We follow the layered partition technique of the proof of \Tr{triangle-free}, however, instead of the metric \defi{spheres} $S_n$ we will now decompose $X$ into its metric \defi{annuli} of width $M:=4K+1$. More precisely, fix a `root' $o\in V(X)$, and let $A_n:= \{ v\in V(X) \mid d(v,o)\in [nM, (n+1)M) \}, n\in \N$ be the $n$th annulus.

Let $\cc_n$ denote the set of  $K$-near-components of $A_n$, where we endow $A_n$ with the metric $d=d_G$ inherited from $G$. An element of $\cc_n, \nin$, is called a \defi{box}. Similarly to {Claim 1} from the proof of \Tr{triangle-free}, we have\\

{\bf Claim a}: Each box $C\in \cc_n$ has diameter less than $D:= c K^2$ \fe\ $n\in \N$, where $c=c_m$ is a universal constant.\\

Indeed, this follows by combining the argument of {Claim 1} with a pigeonhole argument: given $C\in \cc_n$, and $x,y\in C$ with $d(x,y)= \diam(C)$, let $P$ be an \pth{x}{y}\ contained in the ball $B$ of radius $K/2$ around $A_n$, which exists by the definition of a $K$-near-component. If $d(x,y)$ is large enough, then we can pick points $x_0,\ldots, x_{m(M+K)}$ along $P$ with pairwise distance at least $3K$. Since $B$ has bounded width, i.e.\ $B\subset \{v\in V(X) \mid d(v,o)\in [nM-K/2, (n+1)M)+K/2 \}$, the pigeonhole principle implies that some subset $\{x'_1, \ldots x'_m\}$ of the $x_i$ have equal distance to $o$. Letting $\pi_i$ be a geodesic from each $x'_i$ to $o$, we can easily find a $K$-fat $K_{1,m}$ minor of $X$ inside $\bigcup \pi_i$, letting the singletons $\{x'_i\}$ be the branch sets corresponding to the leaves of  $K_{1,m}$. This contradiction bounds $d(x,y)$ and therefore proves Claim~a. (The resulting constant $c_m$ is at most $15m$.)
\medskip

Call a box $C\in \cc_n$ \defi{short}, if it send no edge to $\cc_{n+1}$, and call it \defi{tall} otherwise. As in the proof of \Tr{triangle-free}, we are going to form a graph \g by contracting each box into a vertex. Our hope that \g will be $K_{1,m}$-minor-free will not quite come true, because a $K_{1,m}$ minor in \g that involves some short components will not give rise to a fat $K_{1,m}$ minor in $X$. Therefore,  we want to suppress the short boxes, and we do so as follows. Call two tall boxes $C_1,C_2 \in \cc_n$ \defi{siblings}, if they send an edge to a common short box (of $\cc_{n+1}$). Let $\sim$ denote the transitive reflexive closure of the sibling relation. For each $\sim$-equivalence class $[C]$, define a \defi{super-box} $C'$ comprising the union of all elements of $[C]$ as well as all short boxes in $\cc_{n+1}$ sending an edge to an element of $[C]$. Finally, let $\cc'_n$ denote the set of super-boxes $C'$ with $C\in \cc_n$. Notice that distinct super-boxes are disjoint, and therefore $\bigcup_{\nin} \cc'_n$ is a partition of $V(X)$. Moreover, for each $C' \in \cc'_n$, we have $C' \subseteq \cc_n \cup \cc_{n+1}$ by definition, and therefore 
\labtequ{diam C}{$\diam(C') < 2D.$}
Indeed, we can repeat the proof of Claim a with $M$ replaced by $2M$. 

Each super-box $C'\in \cc_n$ contains at least one tall box $C$, which by definition has a non-empty intersection $C_M$ with the \defi{middle-layer} of $A_n$, i.e.\ the set 
$A^{1/2}_n:= \{ v\in V(X) \mid d(v,o)=nM + 2K+1 \}$. We call the vertices in $C' \cap A^{1/2}_n$ the \defi{mid-points} of $C'$ ---we could have alternatively defined a box to be \defi{tall} if it has at least one mid-point. Mid-points are helpful because they are far from any other super-box:
\labtequ{midp}{Let $C'_1\neq C'_2$ be two super-boxes, and $x_i$ a mid-point of $C'_i$. Then $d(x_1,C'_2)>2K$, and  $d(x_1,x_2)>4K$.}
Indeed,  every path from $x_i$ to the complement of $C'_i$ has length greater than $2K$ ---because the width of $A_n$ is $4K+1$--- from which both inequalities easily follow. 

\medskip
We now define the graph $G:= X/ \bigcup_\nin \cc'_n$ by contracting each super-box of $X$ into a vertex. In other words, we let $V(G):= \bigcup_\nin \cc'_n$, and we put an edge between $C'_1,C'_2$ in \g whenever there is an edge between these vertex-sets in $X$. 

As in the proof of \Tr{triangle-free}, \eqref{diam C} implies that \g is $(2D,2D)$-quasi-isometric to $X$. Therefore, it only remains to prove that
\labtequ{nominor}{\g does not have a $K_{1,m}$ minor.}
Suppose to the contrary that such a minor $\Delta$ exists, and let $B$ be the branch set corresponding to the degree-$m$ vertex and $v_1,\ldots v_m$ the branch set corresponding to the leaves of $K_{1,m}$. Easily, we may assume that each $v_i$ is a vertex of \G. We will turn $\Delta$ into a $K$-fat $K_{1,m}$ minor of $X$, a contradiction to our assumption.

Recall that each $v_i$ is a super-box of $X$, and so we can pick a mid-point $x_i\in v_i$. Let $\bar{B}$ be a connected subgraph of the ball of radius $K/2$ around $\bigcup B$ in $X$ containing $\bigcup B$, which exists because $\bigcup B$ is $K$-near-connected being a union of adjacent $K$-near-components. Notice that \eqref{midp} implies that $d(x_i,\bar{B})>3K/2$, and  $d(x_i,x_j)>4K$ for $i\neq j$.

Let $\pi_i, 1 \leq i \leq m$, be geodesic from $x_i$ to $\bar{B}$, and let $P_i$ be the initial subpath of $\pi_i$ of length $K$. We can now form a $K$-fat $K_{1,m}$ minor of $X$ as follows. We let the singletons $\{x_i\}$ be the  branch sets corresponding to the $m$ leaves of $K_{1,m}$. We let $P_i$ be the branch paths corresponding to the edges, and we let $\bar{B} \cup \bigcup_{1\leq i \leq m} \pi_i \sm Q_i$ be the branch set corresponding to the centre.  These sets form the desired $K$-fat $K_{1,m}$ minor of $X$. This contradiction proves \eqref{nominor}, completing our proof.
\end{proof}

\noindent  {\bf Remark:} In the special case $m=3$ of \Tr{thm star}, the resulting graph \g becomes a path or a cycle. In this case, after subdividing each edge of \g $M$ times, the quasi-isometry from $X$ to $G$ ---obtained by mapping each vertex to its super-box--- becomes a map of bounded additive distortion, i.e.\ a $(1,K')$-quasi-isometry. This can be proved similarly to \eqref{fourteen}. We do not see how to extend this to the case $m>3$.

\section{More on \Cnr{conj fat min}} \label{sec homeo}

Despite our \Cnr{conj fat min}, we remark that the correspondence between minor-closed families of graphs and quasi-isometry classes is not 1--1. \Tr{thm subdiv} below makes this point clear, but it also provides a tool for studying \Cnr{conj fat min}. Here we think of graphs as topological spaces, by considering the corresponding  1-complex.

\begin{theorem} \label{thm subdiv}
Let $\Gamma,\Delta$ be finite graphs that are homeomorphic as topological spaces, and let $X$ be a length space. Then  \asm{\Gamma}{X} \iff\ \asm{\Delta}{X}. 
\end{theorem}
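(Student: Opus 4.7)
The plan is to reduce to the case of a single elementary subdivision. Two finite graphs are homeomorphic as $1$-complexes \iff\ one can be obtained from the other by a finite sequence of elementary edge subdivisions and their inverses (suppressions of degree-$2$ vertices); this is a standard fact of piecewise-linear topology. Proceeding by induction along such a sequence, it suffices to prove the theorem when $\Gamma$ is obtained from $\Delta$ by subdividing a single edge $uw$ into $uv, vw$ via a new degree-$2$ vertex $v$.

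The direction $\asm{\Gamma}{X}\Rightarrow\asm{\Delta}{X}$ is straightforward. Given a $K$-fat $\Gamma$-minor, I would form a $\Delta$-minor by keeping every branch set $B_a$ with $a\in V(\Delta)\subseteq V(\Gamma)$, every branch path $P_e$ with $e\in E(\Gamma)\setminus\{uv,vw\}$, and defining $P_{uw}^\Delta$ to be the concatenation of $P_{uv}^\Gamma$, an arc inside the connected set $B_v^\Gamma$ joining the endpoints of $P_{uv}^\Gamma$ and $P_{vw}^\Gamma$ that lie in $B_v^\Gamma$, and $P_{vw}^\Gamma$. Each required fatness inequality for the resulting $\Delta$-minor either already holds in the $\Gamma$-minor, or concerns $P_{uw}^\Delta$ against some branch object $Y$ not involving $u$ or $w$, in which case it reduces to the three inequalities $d_X(Y,P_{uv}^\Gamma)\geq K$, $d_X(Y,B_v^\Gamma)\geq K$, $d_X(Y,P_{vw}^\Gamma)\geq K$ supplied by the $\Gamma$-minor.

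For the opposite direction $\asm{\Delta}{X}\Rightarrow\asm{\Gamma}{X}$, fix $K>0$ and take a $K'$-fat $\Delta$-minor for $K'$ to be chosen sufficiently large in terms of $K$. The plan is to build a $K$-fat $\Gamma$-minor by preserving every branch object except the branch path $P:=P_{uw}^\Delta$, which I would subdivide into three pieces $P_{uv}^\Gamma, B_v^\Gamma, P_{vw}^\Gamma$, taking $B_v^\Gamma$ to be a sub-arc of $P$ in the middle, of arc-length at least $2K$ and at arc-length at least $K$ from each endpoint of $P$. The one fatness inequality that does not reduce immediately to the $K'$-fatness of the $\Delta$-minor is $d_X(P_{uv}^\Gamma, P_{vw}^\Gamma)\geq K$, and here lies the heart of the difficulty, because the arc $P$ may a priori fold back on itself in $X$ so that large arc-length separation does not translate into $X$-distance separation.

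To overcome this, my plan is first to replace each branch path of the $\Delta$-minor by a \emph{taut} version, namely a shortest (or almost-shortest, since $X$ is only a length space) arc from one end-branch-set to the other within $X$ minus a controlled open neighbourhood of all other branch objects. Existence of this taut replacement follows from $K'$ being chosen much larger than the excluded neighbourhood radius, and the resulting $\Delta$-minor remains fat with only a mildly reduced constant. The essential feature of a taut branch path is that arc-length along it and ambient $X$-distance agree up to a bounded additive error, so the midpoint subdivision described above yields $d_X(P_{uv}^\Gamma, P_{vw}^\Gamma)\geq K$ and simultaneously places $B_v^\Gamma$ at $X$-distance at least $K$ from $B_u$ and $B_w$. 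The main obstacle I anticipate is making the taut-replacement step quantitative: one has to verify that shortest arcs in the complement of the forbidden neighbourhoods really behave as ambient geodesics, rather than as locally-shortest paths that still fold around the excluded regions, and choose $K'$ large enough to dominate the resulting additive errors.
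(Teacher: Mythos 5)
Your reduction to a single elementary subdivision and your easy direction match the paper (which proves a Lemma for one subdivided edge and applies it repeatedly). The problem is the hard direction, where your argument has a genuine gap at exactly the point you flag. The property you need from a ``taut'' branch path --- that arc-length along it agrees with ambient $X$-distance up to a bounded additive error --- is false in general. A shortest $B_u$--$B_w$ arc in the complement of neighbourhoods of the other branch objects is a geodesic of that \emph{subspace}, so arc-length between two of its points equals the subspace distance, but the subspace distance can vastly exceed the ambient distance: if every ambient geodesic from $B_u$ to $B_w$ passes near some other branch object, the taut path is forced to detour around the forbidden region and can return arbitrarily close (in $X$) to points it passed much earlier (in arc-length). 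No choice of $K'$ fixes this, because the detour is caused by the geometry of $X$ and the positions of the other branch sets, not by a lack of fatness. So your plan of cutting $P$ into three consecutive sub-arcs by arc-length, with the outer two serving as the new branch paths, cannot be made to yield $d_X(P_{uv}^\Gamma,P_{vw}^\Gamma)\geq K$.

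The paper's proof sidesteps folding entirely by not using sub-arcs of $P=P_{uw}$ as the new branch paths. Starting from a $3K$-fat $\Delta$-minor, it lets $p$ be the \emph{last} point of $P$ (in the direction from $V_u$ to $V_w$) with $d(p,V_u)\leq K$ and $q$ the \emph{first} point after $p$ with $d(q,V_w)\leq K$; the sub-arc of $P$ from $p$ to $q$ becomes the new middle branch \emph{set} (which only needs to be connected and $K$-far from everything else --- no straightness required), and the two new branch \emph{paths} are fresh geodesics of length $K$ from $p$ to $V_u$ and from $q$ to $V_w$. Their mutual distance is at least $K$ because each lies within distance $K$ of $V_u$ resp.\ $V_w$ and $d(V_u,V_w)\geq 3K$; their distance from the other branch objects is controlled similarly since they live in balls of radius $K$ around points of $P$. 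The key idea you are missing is that the separation of the two new branch paths should be inherited from the separation of the two old branch sets they attach to, rather than extracted from any metric straightness of $P$ itself. With that modification your induction goes through; without it, the ``taut replacement'' step is not just unquantified but unprovable.
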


Let  $\Theta$ be the graph obtained from $K_4$ by removing one edge (or from $K_{2,3}$ by contracting one). Recall that outerplanar graphs can be characterised as $\mathrm{Forb}(K_4, K_{2,3})$. Combining this with  \Tr{thm subdiv}, and a result of Fujiwara \& Papasoglou \cite{FujPapCoa} stating that a graph is quasi-isometric to a cactus \iff\ it has no asymptotic $\Theta$ minor, we obtain

\begin{corollary} \label{cor OP}
The following are equivalent for a length space $X$:
\begin{enumerate}
         \item \label{OP ii} $X$ is quasi-isometric to a cactus; 
         \item \label{OP i} $X$ is quasi-isometric to an outerplanar graph;
      \item \label{OP iii} $X$ has no asymptotic $K_4$ or  $K_{2,3}$ minor, and
      \item \label{OP iv} \ti\ a subdivision $\Theta'$ of $\Theta$ \st\ $X$ has no asymptotic $\Theta'$ minor.
\end{enumerate}
\end{corollary}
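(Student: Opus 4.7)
The plan is to prove the cyclic chain $(i)\Rightarrow(ii)\Rightarrow(iii)\Rightarrow(iv)\Rightarrow(i)$, leveraging the Fujiwara--Papasoglu characterisation (quasi-isometric to a cactus iff no asymptotic $\Theta$ minor) together with \Tr{thm subdiv} in order to trade $\Theta$ for any of its subdivisions. By \Or{qi graph} we may assume $X$ is a graph throughout.

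For $(i)\Rightarrow(ii)$, every cactus is outerplanar: each block is a single edge or a cycle, and drawing each cycle as a convex polygon glued to the rest at its cut-vertex places all vertices on the outer face. For $(ii)\Rightarrow(iii)$, outerplanar graphs are precisely $\forb{K_4,K_{2,3}}$, so they contain neither $K_4$ nor $K_{2,3}$ as a minor, and therefore have no $K$-fat $K_4$ or $K_{2,3}$ minor for any $K$ (a fat minor is in particular a minor), hence no asymptotic such minor. By \Or{invariance} this property transfers to any quasi-isometric length space, so $X$ inherits it.

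For $(iii)\Rightarrow(iv)$, observe that $K_{2,3}$ is itself a subdivision of $\Theta$: its two degree-$3$ vertices are joined by three internally disjoint paths of length two, so $K_{2,3}$ is a subdivision of the two-vertex theta graph, which is homeomorphic to $\Theta=K_4-e$. Hence $\Theta':=K_{2,3}$ witnesses (iv). For $(iv)\Rightarrow(i)$, any subdivision of $\Theta$ is homeomorphic to $\Theta$ as a $1$-complex, so \Tr{thm subdiv} gives $\asm{\Theta'}{X}\iff\asm{\Theta}{X}$; hence (iv) tells us $X$ has no asymptotic $\Theta$ minor, and the Fujiwara--Papasoglou theorem then produces a quasi-isometry from $X$ to a cactus, which is~(i).

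No step is seriously difficult once \Tr{thm subdiv} and the Fujiwara--Papasoglou theorem are granted; the corollary is essentially a bookkeeping exercise arranging the four conditions around the cycle. The only small verification needed is the identification of $K_{2,3}$ as a subdivision of $\Theta$ used in $(iii)\Rightarrow(iv)$, which is immediate from the degree sequence.
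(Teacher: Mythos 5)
Your proof is correct and follows essentially the same route as the paper: the same cyclic chain of implications, using that cacti are outerplanar, that outerplanar graphs are $\forb{K_4,K_{2,3}}$ together with quasi-isometry invariance of asymptotic minors, that $K_{2,3}$ is a subdivision of $\Theta$, and finally \Tr{thm subdiv} combined with the Fujiwara--Papasoglu characterisation of quasi-cacti. No substantive differences.
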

Here, we say that a graph $\Delta'$ is a \defi{subdivision} of a graph $\Delta$, if $\Delta'$ is obtained by replacing some of the edges of $\Delta$ by independent paths with the same end-vertices.

\begin{proof}
The implication \ref{OP ii} $\to$ \ref{OP i} is obvious because every cactus is outerplanar.

The implication \ref{OP i} $\to$ \ref{OP iii} is straightforward since quasi-isometric spaces have the same asymptotic minors and an outerplanar graph has no $K_4$ or  $K_{2,3}$ minor, let alone an asymptotic one. 

The implication \ref{OP iii} $\to$ \ref{OP iv} is obvious because $K_{2,3}$ is a subdivision of $\Theta$.

To prove \ref{OP iv} $\to$ \ref{OP ii}, notice first that $X$ is quasi-isometric to a graph \G, which by the above remark has no asymptotic $\Theta'$ minor. Then \g has no asymptotic $\Theta$ minor by \Tr{thm subdiv}. Finally, \g is quasi-isometric to a cactus by the aforementioned result of Fujiwara \& Papasoglu \cite{FujPapCoa}, hence so is $X$.
\end{proof}

Note that two graphs are homeomorphic (as topological spaces) \iff\ they are subdivisions of the same graph (which can be obtained by suppressing all vertices of degree 2). Therefore \Tr{thm subdiv} follows by applying the following lemma twice:

\begin{lemma} \label{lem subdiv}
Let $\Delta'$ be a subdivision of a finite graph $\Delta$, and let $X$ be a length space. Then \asm{\Delta}{X} \iff\ \asm{\Delta'}{X}.
\end{lemma}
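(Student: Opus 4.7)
By induction on the number of subdivision operations, it suffices to consider the case where $\Delta'$ is obtained from $\Delta$ by replacing a single edge $e = uw \in E(\Delta)$ by a path $uxw$ through one new vertex $x$; general subdivisions follow by iterating this step.

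For the backward direction $\asm{\Delta'}{X} \Rightarrow \asm{\Delta}{X}$, the plan is to collapse the subdivision back into a single arc. Given a $K$-fat $\Delta'$-minor with branch sets $B_v$ for $v \in V(\Delta')$ and branch paths $P_f$ for $f \in E(\Delta')$, I would keep the branch sets $B_v$ for $v \in V(\Delta)$ and form the connected union $\bar{P}_e := P_{ux} \cup B_x \cup P_{xw}$, which contains a topological path from an endpoint in $B_u$ to an endpoint in $B_w$. I would then apply the Arc Lemma \LemArcC\ to extract an arc $P_e \subseteq \bar{P}_e$ between these endpoints. Fatness is inherited because $P_e \subseteq \bar{P}_e$, and each of the three pieces composing $\bar{P}_e$ was at distance at least $K$ from every branch set or branch path of the $\Delta'$-minor not incident to a vertex of $\{u, x, w\}$; the incidence exceptions in the definition of fatness transfer correctly from $\Delta'$ to $\Delta$.

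For the forward direction $\asm{\Delta}{X} \Rightarrow \asm{\Delta'}{X}$, the plan is as follows. Given $K' > 0$, I would take $K \gg K'$ and start with a $K$-fat $\Delta$-minor. Parametrizing $P_e \colon [0, L_e] \to X$ by arc length (making it 1-Lipschitz) and applying the intermediate value theorem to $t \mapsto d(P_e(t), B_u) - d(P_e(t), B_w)$ produces $t^* \in (0, L_e)$ at which both $d(P_e(t^*), B_u) \geq K/2$ and $d(P_e(t^*), B_w) \geq K/2$; this uses $d(B_u, B_w) \geq K$ together with the triangle inequality. I would then let $B_x$ be a short subarc of $P_e$ centred at $P_e(t^*)$ and let $P_{ux}, P_{xw}$ be the two remaining subarcs. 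Fatness of $B_x$ against $B_u, B_w$ follows from 1-Lipschitzness and the choice of $K$, and fatness of all new pieces against the pieces of the $\Delta$-minor not involving $e$ is inherited directly.

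The hard part will be verifying $d(P_{ux}, P_{xw}) \geq K'$, because the arc $P_e$ may fold in $X$, making the two subarcs metrically close despite being separated along $P_e$ by $B_x$. To handle this I would exploit the freedom provided by $\asm{\Delta}{X}$ both to choose $K$ arbitrarily large and to modify the branch path $P_e$ inside the fat minor before subdividing. Concretely, the plan is to iteratively straighten $P_e$ by shortcutting folds: whenever two non-adjacent segments of $P_e$ come metrically closer than their arc-length gap, replace the intermediate portion with a short connecting arc. Each such shortcut stays inside a small tubular neighbourhood of $P_e$, so, provided $K$ is large enough compared to $K'$, fatness against the other branch sets and branch paths is preserved after finitely many shortcuts. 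Once the process terminates, $P_e$ is essentially geodesic in $X$, and splitting it as above produces the required $K'$-fat $\Delta'$-minor.
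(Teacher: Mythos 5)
Your reduction to a single subdivision, your backward direction, and your identification of the real difficulty (that $d(P_{ux},P_{xw})$ may be small because $P_e$ folds in $X$) are all correct. The gap is in the proposed fix. The iterative shortcutting is not justified on two counts. First, termination: each shortcut shortens the path, but possibly by an amount tending to $0$, so in a length space the process need not stop after finitely many steps; and if you only shortcut when the saving exceeds a fixed threshold, the number of steps is bounded only in terms of the length of $P_e$, which is not controlled by $K$. Second, drift: a shortcut whose endpoints lie on an earlier shortcut arc is only guaranteed to stay near \emph{that} arc, not near the original $P_e$, so the displacement compounds with the nesting depth; without a bound on the number of steps you cannot conclude that fatness against the other branch sets and branch paths survives, and the final path is in any case not ``essentially geodesic''. (A one-shot variant --- take a near-shortest $B_u$--$B_w$ path inside a fixed neighbourhood of $P_e$, and observe that any $X$-shortcut of length $<K'$ between two of its subarcs would itself stay in that neighbourhood, contradicting near-shortestness --- can be made to work, but that is not what you wrote.)

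The paper sidesteps the folding problem entirely, and you could too with one change: do not make $B_x$ a \emph{short} central subarc and keep the two long end-pieces of $P_e$ as branch paths. Instead, starting from a $3K$-fat $\Delta$-minor, let $p$ be the last point of $P_e$ (travelling from $B_u$ to $B_w$) with $d(p,B_u)\le K$ and $q$ the first point after $p$ with $d(q,B_w)\le K$; take the new branch set $B_x$ to be the entire (possibly long and folded) subarc of $P_e$ from $p$ to $q$; discard the two end-pieces of $P_e$ altogether; and take as the two new branch paths fresh geodesics of length $K$ from $p$ to $B_u$ and from $q$ to $B_w$. A branch set carries no distance constraint against itself or against its incident branch paths, so all the folding of $P_e$ is absorbed harmlessly into $B_x$. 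The two new branch paths are at distance at least $K$ from each other because each lies in the $K$-ball around $B_u$, respectively $B_w$, and $d(B_u,B_w)\ge 3K$; they are at distance at least $K$ from every non-incident piece because they lie in $K$-balls around points of $P_e$, which is $3K$-far from those pieces. This yields a $K$-fat $\Delta'$-minor with no straightening at all.
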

\begin{proof}
For simplicity, let us assume that $X$ is a geodesic metric space, it is straightforward to adapt the following arguments to more general length spaces.

By the definitions, \asm{\Delta'}{X} implies \asm{\Delta}{X} because $\Delta \prec \Delta'$.
To show the converse, it suffices to consider the case where $\Delta'$ is obtained from $\Delta$ by subdividing an edge $e\in E(\Delta)$ once, since we can then repeatedly subdivide edges to obtain any subdivision  $\Delta'$ of $\Delta$.

So given $e=xy\in E(\Delta)$, a geodesic space $X$ with \asm{\Delta}{X}, and $K\in \N$, let $M=(\{V_x, x\in V(\Delta)\}, \{P_f, f\in E(\Delta)\})$ be a $3K$-fat $\Delta$-minor in $X$. We construct a $K$-fat $\Delta'$-minor $M'$ in $X$ as follows. For every $x\in V(\Delta)$, we just keep $V'_x:= V_x$ as a branch set. We need to provide the branch set $V'_o$ of the vertex $o$ of $\Delta'$ arising from the subdivision of $e$. For this, let $p$ be the last point along $P_e$, as we move from $V_x$ to $V_y$, \st\ $d(p,V_x)\leq K$ holds, and let $q$ be the first point along $P_e$ after $p$ \st\ $d(q,V_y)\leq K$ holds. Let $V'_o$ be the subpath of $P_e$ between $p$ and $q$. Easily, $d(V'_o,V_z)\geq K$ holds \fe\ $z\in V(\Delta)$. Let $P'_{xo}$ be a shortest $p$--$V_x$~path in $X$, and let $P'_{yo}$ be a shortest $q$--$V_y$~path in $X$, and notice that $|P'_{xo}|=|P'_{yo}|=K$. For every other edge $f$ of $\Delta'$ we have $f\in E(\Delta)$ and we just let $P'_f:=P_f$ to complete the definition of $M'=(\{V'_x, x\in V(\Delta')\}, \{P'_f, f\in E(\Delta')\})$. We have $d(P'_{xo},P'_f)>K$ \fe\ $f\in E(\Delta)$ because $d(p,P_f)\geq 3K$ and $P'_{xo}\subseteq B_p(K)$. Moreover, $d(P'_{xo},P'_{yo})\geq K$ because $P'_{xo},P'_{yo}$ are within distance $K$ from $V_x,V_y$, respectively, and $d(V_x,V_y)\geq 3K$. 

Thus $M'$ is a $K$-fat $\Delta'$-minor in $X$, establishing \asm{\Delta'}{X}. 

\end{proof}

The converse of \Tr{thm subdiv} is false in general: for example, we can let $\Gamma$ be $K_2$, and $\Delta$ be its complement. But if we insist that $\Gamma,\Delta$ are both connected, and allow $X$ to be disconnected, i.e.\ we allow for infinite distances between pairs of points of $X$, then the converse becomes true and not hard to prove\footnote{For this, given non-homeomorphic $\Gamma_1,\Gamma_2$, let $X_i, i=1,2$ be the disjoint union of \oo\ copies of $\Gamma_i$, with each edge of the $n$th copy subdivided $n$ times. Then  $\asm{\Gamma_i}{X_i}$, but $\nasm{\Gamma_i}{X_{3-i}}$.}. If we insist that $X$ be connected however, then the converse fails: if $\Gamma$ is a cycle, and $\Delta$ the union of two cycles with exactly one common vertex, then $\Gamma,\Delta$ are asymptotically equivalent though non-homeomorphic. 
Still, we can obtain a converse to \Tr{thm subdiv} (for connected $X$) by restricting to 2-connected graphs. We leave the details to the interested reader.


\section{Coarse K\"onig Theorem} \label{sec Konig}
K\"onig's classical theorem states that every infinite connected graph contains a countably infinite star or a ray as a subgraph \cite[Proposition 8.2.1.]{DiestelBook05}. It is a cornerstone of infinite graph theory as it facilitates compactness arguments. In this section we prove a metric analogue. 
\smallskip

Let $P_n$ denote the path (in the graph theoretic sense) of length $n$. Let $\bigvee P_n$ be the graph obtained from the disjoint union of \seq{P}\ by identifying the first vertex of each $P_n$. A \defi{ray} is a 1-way infinite path.

\begin{theorem} \label{thm Konig}
(Coarse K\"onig Theorem) Let $X$ be a (connected) length space of infinite diameter. Then $X$ has either the ray or $\bigvee P_n$ as an asymptotic minor.
\end{theorem}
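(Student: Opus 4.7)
The plan is to adapt König's classical argument to the coarse setting by reusing the layered-component idea from the proof of \Tr{triangle-free}. I would fix a base point $o\in X$ and, for each $K>0$, set $L:=10K$ and let $V_n$ be the set of $K$-near-components of the sphere $S_{nL}:=\{x\in X: d(x,o)=nL\}$. For each $n\geq 0$ and each $C\in V_{n+1}$, pick a parent $\mathrm{par}(C)\in V_n$ that is connected to $C$ in $X$ by a short arc; such a parent exists since $X$ is connected and any arc from $C$ to $o$ must cross $S_{nL}$. This makes $T_K:=\bigsqcup_n V_n$ into a rooted tree with root the $K$-near-component of $o$, and since $X$ has infinite diameter, every $V_n$ is non-empty, so $T_K$ has infinite depth.

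I would then split on a König-style dichotomy. Suppose first that $T_K$ has an infinite descending branch $C_0, C_1, C_2, \ldots$ with $C_n \in V_n$ and $C_n=\mathrm{par}(C_{n+1})$. Choose representatives $v_n\in C_n\subseteq S_{nL}$ and join consecutive $v_n, v_{n+1}$ by short arcs in $X$; after the branch-cutting surgery used in the proof of \Tr{triangle-free}, this yields a $K$-fat ray minor of $X$. Fatness follows because $v_n, v_m$ lie on spheres of radii differing by $|n-m|L\gg K$, and the arcs joining them can be kept close to the radial direction.

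Now suppose $T_K$ has no infinite descending branch. Let $d(x)$ denote the supremum depth of the subtree of $T_K$ rooted at $x$. Since $T_K$ has infinite depth, $d(r)=\infty$ for the root $r$. If every vertex $x$ with $d(x)=\infty$ had some child $y$ with $d(y)=\infty$, iterated descent would produce an infinite branch, a contradiction. Hence some $x$ satisfies $d(x)=\infty$ while $d(y)<\infty$ for every child $y$ of $x$; this $x$ must have infinite degree, and a pigeon-hole argument forces infinitely many children $y$ to satisfy $d(y)\geq n$ for each $n$ (otherwise $\sup_y d(y)$ would be finite, hence $d(x)<\infty$). Picking distinct children $y_{i_1}, y_{i_2}, \ldots$ with $d(y_{i_n})\geq n$, fixing a descending $T_K$-path of length $n$ below each, and translating back to $X$ by the same surgery as in the previous case yields a $K$-fat $\bigvee P_n$ minor centred on the $K$-near-component of $x$. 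Distinct legs remain $K$-far because at each level their branch sets lie in distinct $K$-near-components of the same sphere.

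Finally I would upgrade ``for every $K$ one of the two minors exists'' to the asymptotic-minor dichotomy. Admitting a $K$-fat ray (respectively, $\bigvee P_n$) minor is trivially downward-closed in $K$, so the two sets of admissible $K$ are downward-closed subsets of $(0,\infty)$ whose union is all of $(0,\infty)$; at least one of them must therefore equal $(0,\infty)$, producing the desired asymptotic minor. The main obstacle I foresee is the geometric step in the $\bigvee P_n$ case: showing that branch paths coming from geodesics in different subtrees of $x$ stay at distance $\geq K$ from each other and from non-incident branch sets. This should be manageable by the path-surgery techniques already developed in \Tr{triangle-free}, possibly at the cost of a larger choice of $L$ than $10K$.
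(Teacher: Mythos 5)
Your overall architecture --- the tree $T_K$ of $K$-near-components of concentric spheres, the K\"onig-style dichotomy on $T_K$, and the final upgrade via the observation that two downward-closed subsets of $(0,\infty)$ covering $(0,\infty)$ cannot both be bounded --- is sensible, and the first branch of your dichotomy does work. An infinite descending branch of $T_K$ yields a connected, locally finite subgraph that meets every ball of $X$ only in a bounded initial portion, hence contains a divergent ray; the paper proves exactly the lemma you need here (\Lr{lem ray}: a divergent ray yields the ray as an asymptotic minor), and your radial-separation remark is essentially its proof. Note in passing that for this branch you do not even need the minor to be $K$-fat for the \emph{same} $K$ used to build $T_K$: a divergent ray gives the ray as an asymptotic minor outright.

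The genuine gap is in the $\bigvee P_n$ case, and it is precisely the step you flagged and then waved away. Distinct $K$-near-components of the same sphere are indeed at ambient distance $>K$, but your legs are not contained in the spheres: they consist mostly of the connecting arcs between consecutive levels and the arcs filling in near-paths, all of which live in the open annuli. Two such arcs belonging to different legs can pass within distance $<K$ of one another (say through a common bottleneck of an annulus) without forcing their sphere-level branch sets into a common near-component --- the triangle inequality only bounds the distance between their endpoints by roughly $2L+K$, which is perfectly compatible with those endpoints lying in distinct $K$-near-components. Worse, the first branch paths of all the legs terminate in the \emph{same} central near-component, possibly at arbitrarily close points, which already violates condition \ref{fm ii} of \Dr{def fat} for distinct branch paths. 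Enlarging $L$ or doing local path surgery as in \Tr{triangle-free} does not address this, because the obstruction is tangential rather than radial: if two legs interact far from the centre you must discard one of them, and you then have to show that infinitely many pairwise non-interacting legs of unbounded length survive. This global selection is the missing idea, and it is exactly what the paper supplies: it abandons the sphere decomposition for this theorem, takes a family of geodesics $P_n$ from $o$, forms the auxiliary graphs $G_n$ whose edges are the $(R,n)$-bad pairs (pairs coming $R$-close beyond radius $n$), and applies an iterated Ramsey argument (\Lr{Ramsey}). An infinite independent set gives legs that provably never come close beyond a bounded initial segment, hence a fat $\bigvee P_n$; the nested-cliques alternative is instead used to chain the geodesics into a divergent ray. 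Without some such pairwise-interaction analysis your second case does not go through.
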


We prepare the proof of this with a couple of lemmas of independent interest. A ray $P$ in a graph \g is called \defi{divergent}, if $P\cap S$ is finite \fe\ subset $S$ of $V(G)$ with bounded diameter. 

\begin{lemma} \label{lem ray}
Let $G$ be a graph that has a divergent ray. Then $G$ has  the ray as an asymptotic minor.
\end{lemma}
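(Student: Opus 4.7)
The plan is to carve the divergent ray $P=p_0p_1p_2\ldots$ into successive subpaths at judiciously chosen anchor vertices $p_{n_0},p_{n_1},\ldots$ and to use these as the branch sets and branch paths of a $K$-fat ray minor, for any prescribed $K>0$. The only tool I need is the following immediate consequence of divergence: for every finite $S\subseteq V(G)$ and every $r>0$, only finitely many $p_j$ lie within distance $r$ of $S$. Indeed, the $r$-neighbourhood of $S$ has bounded diameter, so by the definition of a divergent ray it meets $P$ in only finitely many vertices; equivalently, there is some $N$ with $d(p_b,S)\geq r$ for all $b\geq N$.

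Given $K$, I would set $n_0:=0$ and choose $n_1<n_2<\cdots$ greedily: having selected $n_0,\ldots,n_k$, apply the above to $S=\{p_0,\ldots,p_{n_k}\}$ with $r=K$ to pick $n_{k+1}$ so large that $d(p_a,p_b)\geq K$ whenever $a\leq n_k$ and $b\geq n_{k+1}$. I would then take branch sets $B_i:=\{p_{n_i}\}$ and branch paths $P_i:=p_{n_i}p_{n_i+1}\cdots p_{n_{i+1}}$. The non-incident pairs required to be $K$-apart by \Dr{def fat} are $(B_i,B_j)$ with $i\neq j$, $(P_i,P_j)$ with $|i-j|\geq 2$, and $(B_i,P_j)$ with $i\notin\{j,j+1\}$. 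In each such case, after assuming $i<j$, the smaller-indexed set lies in $\{p_0,\ldots,p_{n_{j-1}}\}$ and the larger-indexed set lies in $\{p_{n_j},p_{n_j+1},\ldots\}$, so the defining inequality (applied at stage $k=j-1$) gives distance $\geq K$.

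The main subtlety is more a matter of book-keeping than a conceptual obstacle: consecutive branch paths $P_i,P_{i+1}$ share the endpoint $p_{n_{i+1}}\in B_{i+1}$, so as arcs they are not literally disjoint. The cleanest fix is to enlarge each branch set to two consecutive ray vertices, $B_i:=\{p_{n_i},p_{n_i+1}\}$, and have $P_i$ go from $p_{n_i+1}\in B_i$ to $p_{n_{i+1}}\in B_{i+1}$; then the $P_i$ are pairwise vertex-disjoint arcs with endpoints in the incident branch sets, and the $K$-fatness verification above carries over provided we apply the divergence consequence at each stage to the slightly larger finite set $\{p_0,\ldots,p_{n_k+1}\}$. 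Since $K$ was arbitrary, we obtain a $K$-fat ray minor for every $K$, so the ray is an asymptotic minor of $G$; note moreover that the witnessing branch sets and paths all lie along $P$ itself.
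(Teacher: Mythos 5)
Your proof is correct and follows essentially the same route as the paper's: both carve the divergent ray into consecutive segments that serve alternately as branch sets and branch paths, with cut points pushed far enough along the ray (using the fact that a divergent ray eventually escapes any bounded-diameter set) so that non-consecutive pieces are $K$-apart. The only differences are book-keeping — the paper chooses its cut points by tracking distance to the fixed origin $o$ and deducing fatness via the triangle inequality through $o$, whereas you track distance to the accumulated initial segment directly, and you are somewhat more careful than the paper about the shared endpoints of consecutive pieces.
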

\begin{proof}
Let $P$ be a divergent ray of $G$, and let $o$ be its starting vertex. Given $R\in \N$, let $r(n)$ denote the last vertex $v$ of $P$ with $d(o,v)\leq n$. 
We form an $R$-fat ray minor of \g as follows. Let $B_0:= \{o\}$. 
Having defined $B_{i-1}$, we proceed to define $B_i, i\geq 1$ inductively by letting $B_i$ be the subpath of $P$ from the last vertex of $B_{i-1}$ to $r({R+s_n})$, where $s_n:= \max \{d(o,v) \mid v\in  \bigcup_{j<i }V(B_j) \} $. This choice guarantees that $d(B_i,B_j)\geq R$ holds whenever $|i-j|\geq 2$ (\fig{figFatRay}).

\begin{figure} 
\begin{center}
\begin{overpic}[width=1\linewidth]{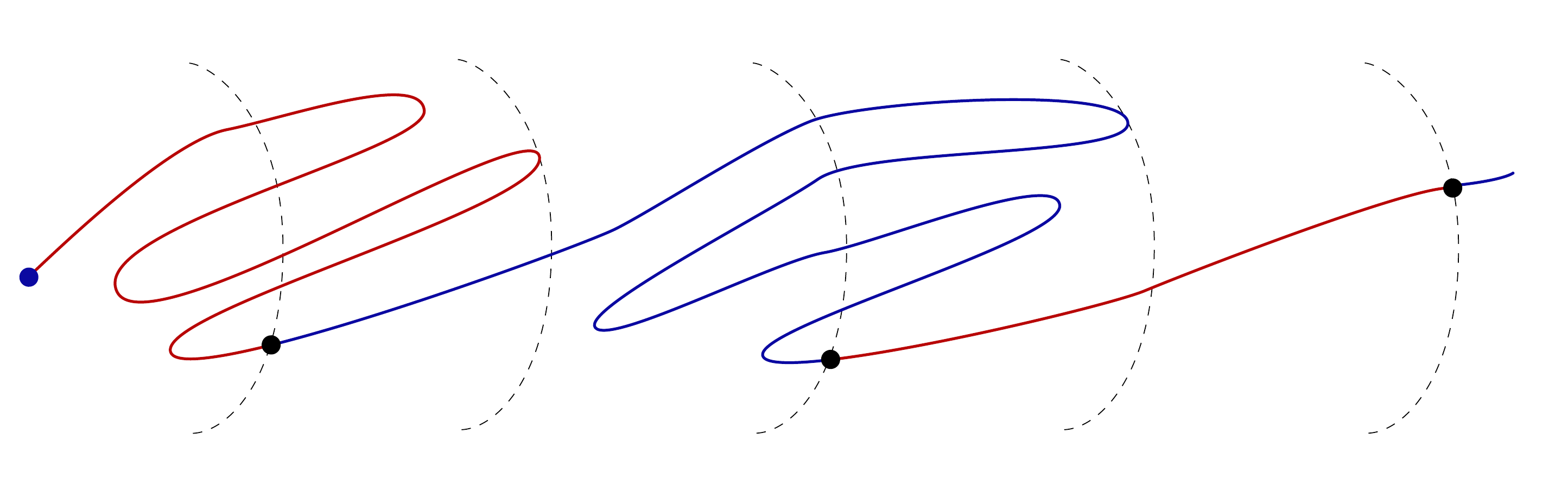} 
\put(0,11){$o$}
\put(11,28){$R$}
\put(17.2,6.4){$r(R)$}
\put(28,28){$s_2$}
\put(53,5.6){$r(R+s_2)$}
\put(94,16.4){$r(R+s_3)$}
\put(44,28){$R+s_2$}
\put(66,28){$s_3$}
\put(83,28){$R+s_4$}
\end{overpic}
\end{center}
\caption{Turning a diverging ray into a fat ray minor. Points meeting the same dashed line have the same distance from $o$,  indicated at the top. Branch sets are depicted in blue, if colour is shown, and branch paths in red.} \label{figFatRay}
\end{figure}

Thus we can let the $B_i$ with even $i$ be the branch sets, and the $B_i$ with odd $i$ the branch paths, of a ray-minor of \G, which minor is $R$-fat by the last remark.
\end{proof}

By the Infinite Ramsey theorem, every infinite graph contains either an infinite clique or an infinite independent set. We will use the following corollary of this.
\begin{lemma} \label{Ramsey}
Let $G_1 \supseteq G_2 \supseteq G_3 \ldots$ be graphs with a common infinite vertex set $V$. Then either some $G_i$ contains an infinite independent set, or there are infinite subsets $S_1 \supseteq S_2 \supseteq S_3 \ldots$ of $V$ \st\ $G_i[S_i]$ is complete \fe\ $i$.
\end{lemma}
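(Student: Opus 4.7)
The plan is to iterate the Infinite Ramsey Theorem down a shrinking sequence of vertex sets. Assume the first alternative of the conclusion fails, so that no $G_i$ contains an infinite independent set; under this hypothesis I will construct the nested sequence $S_1\supseteq S_2\supseteq\ldots$ by induction on $i$.

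For the base case, apply the Infinite Ramsey Theorem to the graph $G_1$ on the infinite vertex set $V$. Since $G_1$ has no infinite independent set by hypothesis, the Ramsey dichotomy forces the existence of an infinite set $S_1\subseteq V$ with $G_1[S_1]$ complete. For the inductive step, suppose an infinite $S_{i-1}$ with $G_{i-1}[S_{i-1}]$ complete has been produced. Consider the graph $G_i[S_{i-1}]$; any infinite independent set in this graph would also be an infinite independent set in $G_i$ itself, contradicting our assumption. Hence Infinite Ramsey yields an infinite subset $S_i\subseteq S_{i-1}$ on which $G_i$ is complete. This gives the required sequence.

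The one feature worth noting is that the nesting $S_i\subseteq S_{i-1}\subseteq\cdots\subseteq S_1$ together with the fact that $G_i[S_i]$ is complete makes the stronger statement $G_j[S_i]$ complete for every $j\leq i$ automatic, since the edges of $G_i$ in $S_i$ are also edges of $G_j\supseteq G_i$. There is no genuine obstacle in this argument: everything reduces to invoking the classical Infinite Ramsey Theorem countably many times, and the only care needed is in choosing, at stage $i$, to apply Ramsey to $G_i$ restricted to the already-constructed $S_{i-1}$, rather than to $G_i$ on all of $V$, so that the nesting is preserved.
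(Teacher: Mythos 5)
Your proof is correct and matches the paper's argument essentially verbatim: both iterate the Infinite Ramsey Theorem, applying it at stage $i$ to $G_i$ restricted to the previously obtained clique set $S_{i-1}$ so as to preserve the nesting. The only cosmetic difference is that you phrase it as a contradiction argument under the assumption that no $G_i$ has an infinite independent set, while the paper keeps the dichotomy at each step; these are interchangeable.
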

\begin{proof}
Apply the Infinite Ramsey theorem to $G_1$. Either it returns an independent set and we are done, or it returns a clique $G_1[S_1]$. Apply the Infinite Ramsey theorem  to the subgraph  $G_2[S_1]$ induced on $G_2$ by the vertices of that clique, and proceed recursively. In each step $i$, either we find an infinite independent set in $G_i$, or a clique $G_i[S_i]$ where $S_i \subseteq S_{i-1} \ldots \subseteq S_1$. 
\end{proof}

We can now prove the main result of this section.
\begin{proof}[Proof of \Tr{thm Konig}]
For convenience we will assume $X$ to be a graph, which we can because every length space is quasi-isometric to a graph (\Or{qi graph}). 

Since $X$ has infinite diameter, there is a sequence $\seq{P}$ of geodesics $P_n:[0,n]\to X$ parametrised by arc length with $P_n(0)=o\in X$ a fixed point.
Call this set of geodesic paths $S$.
Fix $R>0, R\in \mathbb N$. We will show that $X$ contains either an $R$-fat $\bigvee P_n$
or an $R$-fat ray.

We say that $\{P_i,P_j\}$ is a \defi{$(R,n)$-bad pair}, if there are  vertices $v_i\in V(P_i), v_j\in V(P_j)$ satisfying $d(v_i,v_j)\leq R$ and $d(v_i,o), d(v_j,o) > n$.

We define a sequence of auxiliary graphs on $S$ as follows. Let $G_n=G_n(R)$ be the graph on $S$ whose edges are the $(R,n)$-bad pairs. Apply \Lr{Ramsey} to this sequence \seq{G}. There are two cases depending on the outcome that \Lr{Ramsey} returns. 

If it returns some $G_n$ with an independent set $S_n$, then we can find a $\bigvee P_n$ $R$-fat minor in $\bigcup S_n$ as follows. Choose a sequence $\{P'_k\}_{k\in \N}$ of elements of $S_n$
such that the length of $P'_k$ is greater than $2kR+n$. We set 
$$B_{v_0}=\bigcup_{k\in \N} P'_k([0,n]),$$ 
which will be the branch set corresponding to the infinite-degree vertex of $\bigvee P_n$. For the other branch sets, we decompose each $P'_k \sm P'_k([0,n])$ into $2k$ intervals of length at least $R$, and let every other such interval be a branch path and every other one a branch set.
This defines an $\bigvee P_n$ minor in $X$, which is $R$-fat because no pair $\{P'_k,P'_j\}$ is $(R,n)$-bad, and each $P'_k$ is a geodesic. 

\medskip
Otherwise, our application of \Lr{Ramsey} returns a sequence $S_1 \supseteq S_2 \supseteq S_3 \ldots$ with each $G_i[S_i]$ complete. In this case, we will find a divergent ray $D$ in $X$, and therefore an asymptotic ray minor by \Lr{lem ray}. We construct $D$ recursively as follows. Let $D_0$ be the trivial path $\{o\}$, and pick an element  $P^1$ of $S_1$. Then for $i=1,2, \ldots$, assume we have defined a connected subgraph $D_{i-1}$ of $X$ and some $P^{i}\in S_i$ such that $D_{i-1} \cap P^{i}$ contains a vertex $h_i$ with $d(o,h_i)\geq i-1-R$. Pick $P^{i+1}\in S_{i+1}$, and note that since $G_{i}[S_{i}]$ is complete, and $P^{i+1}\in S_{i+1} \subseteq S_i$, the pair $\{P^{i}, P^{i+1}\}$ is $(R,i)$-bad. This means that there is a \pth{P^{i}}{P^{i+1}}\ $X_i$ of length at most $R$, and at distance at least $i-R$ from $o$. Note that $\lim_{j\to \infty} d(o, X_j)= \infty$ as $R$ is fixed.  Let $Z_i$ be the subpath of $P^{i}$ from $X_i$ to $h_{i}$, and let $D_{i}:=D_{i-1} \cup X_i \cup Z_i$. Letting $h_{i+1}:= P^{i+1} \cap X_i$,  our inductive hypothesis is preserved. Note that $\lim_{j\to \infty} d(o, Z_j)$ is infinite as well. 

Let $D':= \bigcup_{i\in \N} D_i$. Clearly, $D'$ is connected since each $D_i$ is. We claim that $D'$ is \lf. 
Indeed, notice that each $D_i$ is finite, and it meets at most finitely many of the  $X_j \cup Z_j, j\in \N$, because $\lim_{j\to \infty} d(o, X_j \cup Z_j)= \infty$.

Let $D$ be a ray in $D'$, which exists by K\"onig's theorem mentioned at the beginning of this section. Then $D$ is divergent by the last argument.

\end{proof}

	\comment{For convenience we will assume $X$ to be a graph, which we can because every length space is quasi-isometric to a graph (\Or{qi graph}). 

Since $X$ has  infinite diameter, there is a sequence of geodesics $p_n:[0,n]\to X$ parametrised by arc length with $p_n(0)=a\in X$ a fixed point.
Call this set of geodesic paths $S$.
Fix $R>0, R\in \mathbb N$. We will show that $X$ contains either an $R$-fat $\bigvee P_n$
or an $R$-fat ray.

Consider the following property that we call $G(n)$: There is an infinite subset of $S$, say $S_n$, such that
ifor any $p_1,p_2\in S_n$ then $d(p_1(t_1),p_2(t_2))\leq R$ implies that $t_i\leq n$ for at least one $i (i=1,2)$. Note that
if $d(p_1(t_1),p_2(t_2))\leq R$ then $|t_2-t_1|\leq R$ as $p_1,p_2$ are geodesics.

Clearly if $G(n)$ holds for some $n\geq R$ then there is a subset of $S_n$
which is a $\bigvee P_n$ $R$-fat minor. Indeed it suffices to take a sequence $q_k\in S_n$
such that the length of $q_k$ is greater, say, than $2kR+n$. We set 
$$B_{v_0}=\bigcup q_k([0,n])$$
$B_{v_{ij}}=q_i(jR+n)$ for $j\geq 1, j\in \mathbb N$ to be the branch sets, where $j<i$. For $j=i$,  if $\ell=length(q_i)$ we set the branch set
$B_{v_{ii}}$ to be
$$B_{v_{ii}}=q_i([iR+n,\ell]).$$
We take $P_{vw}$  to be the obvious 
paths in the geodesics $q_k$ joining `successive' branch sets $B_v$, $B_w$. This defines an $\bigvee P_n$ $R$-fat minor.

So we may assume that $G(n)$
does not hold for any $n$.

Let $S'\subseteq S$ be infinite. We say that a path $p\in S'$ is a $bad(n)$ path if there are infinitely many paths $q\in S'$
such that $d(p(t),q(s))\leq R$ for some $t\geq n, s\in \mathbb N$.

Assume that for some $n$ there are no $bad(n)$ paths in some infinite subset of $S$, say $S'$. Pick then a path $q_1$ in $S'$ and take out from $S'$ the finite set
of paths $q$ such that $d(q_1(t),q(s))\leq R$ for some $t\geq n$ to obtain a set $A_1\subseteq S$. Now repeat, take $q_2\in A_1$
and eliminate from $A_1$ all paths $q$ such that $d(q_2(t),q(s))\leq R$ for some $t\geq n$ to obtain
a new infinite set of paths $A_2$.
 And so on. This produces an infinite set of paths $\{q_1,q_2,...\}$
that satisfies $G(n)$, a contradiction.  So there are $bad(n)$ paths in $S'$. In particular there are $bad(n)$ paths in $S$ for any $n$.

Let $q_1$ be $bad(n)$ in $S$ for $n=10R$ and let $C_1$ be the set of all paths $q$ such that $d(q_1(t),q(s))\leq R$ 
for some $t\geq n$. Since $q_1$ is a finite path there is some $k_1\in \mathbb N, k_1\geq n$ and an infinite
subset of $C_1$ (which we still denote $C_1$ to keep notation simple) so that for each $q\in C_1$
there is some $s\in \mathbb N$ for which
$d(q_1(k_1),q(s))\leq R$. Since both $q_1,q$ are geodesic paths $|s-k_1|\leq R$, hence
$$d(q_1(k_1),q(k_1))\leq 2R.$$

Since $C_1$ is an infinite subset of $S$, it contains a $bad(10k_1)$ path $q_2$. We repeat
and we obtain as before $k_2\in \mathbb N, k_2\geq 10k_1$ and an infinite subset $C_2$ of $C_1$ so that
so that for each $q\in C_2$ there is some $s$ so that
$d(q_2(k_2),q(s))\leq R$. As before for all $q\in C_2$ we have
$$d(q_2(k_2),q(k_2))\leq 2R.$$

We continue inductively constructing infinite subsets $S=C_0\supset C_1\supset C_2 \ldots$
and paths $q_1,q_2,\ldots$ such that $q_i\in C_{i-1}$  with the following property:
$d(q_i(k_i),q_j(k_i))\leq 2R$ for all $j>i$ where the sequence $\{k_i\}$ satisfies $k_{i+1}\geq 10k_i$, $k_1\geq 10R$.

We use this sequence of paths to construct an $R$-fat ray $r$ as follows. For $i\geq 1$ we pick a geodesic path $r_i$ joining $q_i(k_i)$ to $q_{i+1}(k_{i})$. We define the branch set $B_{v_i}$ to be the set 

$$q_i([k_i-2R,k_i])\cup r_i\cup q_{i+1}([k_{i},k_{i}+2R])$$

For $i=0$ we set $B_{v_0}=q_1(0)$

We define $P_{e_i}$ for $i\geq 1$ to be the arc
$$q_{i+1}([k_{i}+2R,k_{i+1}-2R]).$$

For $i=0$ we define $P_{e_0}=q_1([0,k_1-2R ])$.
It is clear then that by construction the set
$$r=\bigcup B_{v_i} \bigcup P_{e_i}$$ is an $R$-fat ray.
} 

\comment{
A potential approach towards \Cnr{Cnr Konig} is to take, \fe\ $K\in \N$, a $K$-net $G_n$ in $X$, and apply the following to $G_n$:
\begin{lemma} \label{lem}
Let $G$ be a (connected) graph of infinite diameter. Then $G$ contains the ray or $\bigvee P_n$ as a subgraph.
\end{lemma}
\begin{proof}
Pick a vertex $o\in V(G)$, and let $L_n,\nin$ be the set of vertices at distance exactly $n$ from $o$. \Fe\ $\nin$, pick a vertex $x_n\in L_n$, and a `downward' \pth{x_n}{o}\ $T_n$. By stopping $T_n$ upon reaching $\{T_j \mid j<n\}$ we can ensure that $\bigcup T_n$ is a (directed) tree $T\subseteq G$. We distinguish two cases. 

If $|L_n \cap T| <\infty$ holds \fe\ \nin, then K\"onig's lemma yields an infinite geodesic in \G. 

Otherwise, we can choose the smallest index  $n$ \st\ $|L_n \cap T| = \infty$. Then some $o'\in L_j, j< n$ sends infinitely many disjoint paths to $L_n$, and by extending these paths to higher $L_i$'s we obtain a copy of $\bigvee P_n$ in $T$.
\end{proof}

But I cannot yet use \Lr{lem} to prove \Cnr{Cnr Konig}.
}

\section{Coarse Halin Theorems} \label{sec Halin}

For a cardinal $k$, and a graph $H$, let $k \cdot H$ denote the disjoint union of $k$ copies of $H$. Let $R$ denote the ray, i.e.\ the 1-way infinite path. Halin \cite{halin65} proved that \fe\ graph $G$, if $n\cdot R \subset G$ holds \fe\ $\nin$, then $\omega \cdot R \subset G$. Does the coarse version hold?

\begin{conjecture} \label{Cnr HRT}
(Coarse Halin Ray Theorem) Let $X$ be a length space. Suppose $n\cdot R \prec^\infty X$ holds \fe\ \nin. Then $\omega \cdot R \prec^\infty X$.
\end{conjecture}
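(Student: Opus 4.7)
The plan is to imitate the classical proof of Halin's ray theorem \cite{halin65} in the coarse setting. By \Or{qi graph} I reduce to the case where $X$ is a graph $G$. Fix $K\in\N$; the goal is to exhibit $\omega$ pairwise $K$-far $K$-fat rays in $G$, which is exactly a $K$-fat $\omega\cdot R$ minor. For a large constant $C$ to be chosen later, the hypothesis provides, for every $n$, a family $\mathcal{F}_n=(R_1^n,\ldots,R_n^n)$ of $n$ pairwise $CK$-far $CK$-fat rays in $G$.

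The first move is to pass to an auxiliary graph $H$ defined on a maximal $K$-net $N$ of $G$, with two net points joined iff they are at $G$-distance $\leq 3K$. Each $CK$-fat ray $R_i^n$ projects to a divergent ray $r_i^n$ in $H$ by picking successive net points along its branch sets and branch paths; choosing $C$ large enough ensures that the rays $r_i^n$, $r_j^n$ of the common family $\mathcal{F}_n$ are vertex-disjoint (indeed pairwise far in $H$), because their net representatives lie in disjoint $CK/2$-neighbourhoods of the original fat rays. Hence $H$ contains $n$ vertex-disjoint divergent rays for every $n$, and Halin's classical ray theorem delivers $\omega$ pairwise vertex-disjoint divergent rays $\{r_i\}_{i\geq 1}$ in $H$. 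Connecting successive net points of each $r_i$ by $G$-geodesics gives divergent connected subgraphs $\tilde r_i\subseteq G$; each contains a divergent ray by K\"onig's infinity lemma, which \Lr{lem ray} then thickens into a $K$-fat ray $F_i$ in $G$.

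The hard part will be ensuring that the resulting $F_i$ are pairwise at distance $\geq K$: vertex-disjointness of the $r_i$ in $H$ does not immediately yield $K$-separation of the $F_i$, because the filler geodesics of distinct $r_i$'s can pass close to each other. To overcome this I would either work from a much sparser net and combine Halin with a Menger-style rerouting that spreads out the filler geodesics, or adapt the Ramsey/K\"onig scheme used in the proof of \Tr{thm Konig}: for each pair $(r_i,r_j)$, record whether their $K$-neighbourhoods in $G$ come within distance $K$ of each other infinitely often, apply the infinite Ramsey theorem (or \Lr{Ramsey}) to the resulting sequence of pair-graphs to extract a subfamily whose pairs are uniformly eventually separated, and build the fat rays inside that subfamily.

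A complementary high-level route, closer in spirit to the classical end-theoretic proof of Halin, would split on the number of coarse ends of $X$: if there are infinitely many, one picks a single $K$-fat ray per end so that truncating initial segments makes them automatically $K$-separated; if there are only finitely many, pigeonholing the families $\mathcal{F}_n$ funnels them into a single coarse end containing $n$ pairwise $K$-far $K$-fat rays for every $n$, and one recurses on a suitable notion of ``width'' of that end, using fat analogues of separators. I expect the complete argument to be of comparable difficulty to the classical Halin theorem itself, with the coarse constants and the rerouting/separation step providing the main technical subtlety.
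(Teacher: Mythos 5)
The first thing to note is that the paper does not prove this statement: it is posed as an open conjecture (the text asks ``Does the coarse version hold?'' and the ``Latest progress'' section records no resolution). So there is no proof of record to compare yours against, and a correct argument here would be a new result, not a reconstruction.

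Your proposal is a programme rather than a proof, and the gap you flag yourself --- making the final family of fat rays pairwise $K$-separated --- is exactly where the whole difficulty lives, not a finishing technicality. Halin's theorem applied to the net graph $H$ returns $\omega$ rays that are merely vertex-disjoint, and disjoint rays can run ``parallel at distance one'' forever (the horizontal rays of the grid $\N\times\N$ are the standard example); Halin's rerouting gives no metric separation whatsoever, and after pulling back to $G$ and inserting filler geodesics two such rays can even intersect. Neither of your two patches closes this. The Ramsey extraction can land in the ``clique'' case, where every pair of the $\omega$ rays comes within distance $K$ of each other infinitely often, and then you have no construction at all and no contradiction with the hypothesis. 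Even in the ``independent set'' case, knowing that each pair $(i,j)$ is \emph{eventually} $K$-separated does not produce a single truncation point $t_i$ for each ray $F_i$: the separation point for the pair $(i,j)$ can tend to infinity with $j$, so no tail of $F_i$ works against all other rays simultaneously. Taming this unbounded interaction between one ray and infinitely many others is precisely what the delicate inductive rerouting in Halin's original proof accomplishes, and that rerouting is the step that destroys distances; the end-splitting route in your last paragraph faces the same issue inside a single end. In short, the reduction to $H$ plus classical Halin is a sensible first move, but the conjecture remains open after your sketch.
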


Let $\mathcal{H}$ denote the `half' square grid on $\N \times \N$, and let $\mathcal{F}$  denote the `full' square grid on $\Z \times \Z$. Halin's grid theorem \cite{halin65} says that every 1-ended graph satisfying $n\cdot R \subset G$ \fe\ \nin\ contains a subdivision of $\mathcal{H}$.

\begin{problem}
(Coarse Halin Grid Theorem) Let $X$ be an 1-ended length space. Suppose $\omega \cdot R \prec^\infty X$. Then $\mathcal{H} \prec^\infty X$.
\end{problem}

\begin{problem}
Let $G$ be a \Cg\ of an 1-ended finitely generated group. Must $\mathcal{H} \prec^\infty G$ hold? Must $\mathcal{F} \prec^\infty G$ hold?
\end{problem}

\section{Coarse Menger Theorem} \label{sec Menger}

In this section we prove the case $n=2$ of \Cnr{conj MM}. In \Sr{sec div rays} below we present a corollary for diverging rays in infinite graphs. We restate our special case for convenience: 

\begin{theorem} \label{Menger}
Let $X$ be a graph or a geodesic metric space, and $A,Z$ two subsets of $X$. For every $K>0$, there is either
\begin{enumerate}
\item \label{S} a set $S \subset X$ with $\diam(S)\leq K$ such that $X \setminus S$ contains no path joining $A$ to $Z$, or 
\item \label{paths} two paths joining $A$ to $Z$ at distance at least $a=K/16\cdot 17=K/272$ from each other.
\end{enumerate}
Moreover, \fe\ $x\in A, z\in Z$, we can choose the paths of \ref{paths} so that both $x,z$ are among their four endpoints.

\end{theorem}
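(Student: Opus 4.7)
Set $a := K/272$ and let $P \colon [0,L] \to X$ be a shortest $A$-to-$Z$ path with $P(0) = x$, $P(L) = z$, parametrized by arclength. The plan is to take $P$ itself as one of the two paths required by conclusion \ref{paths}, so that its endpoints already provide both $x$ and $z$ among the four; the task then reduces to producing either a separator $S$ as in \ref{S}, or a second $A$-to-$Z$ path $Q$ with $d(P,Q) \geq a$.

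The arithmetic $272 = 16 \cdot 17$ suggests discretizing $P$ at scale $16a$. I would place reference points $m_i := P(16 a \cdot i)$ for $i = 0, 1, \dotsc, N$, and consider, for each $i$, the ball $B_i := B(m_i, 8a)$. The main dichotomy is: either some $B_i$ separates $A$ from $Z$ in $X$, in which case setting $S := B_i$ yields $\diam(S) \leq 16 a \leq K$ and conclusion \ref{S} holds; or for every $i$ there exists an $A$-to-$Z$ path $Q_i$ contained in $X \setminus B_i$. The second alternative is the substantive one.

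In that second alternative, the strategy is to splice local portions of the $Q_i$ into a single $A$-to-$Z$ path $Q$ staying at distance $\geq a$ from $P$ throughout. For each $i$, let $R_i$ be the sub-arc of $Q_i$ whose nearest-point projection to $P$ falls in the interval $[16a(i-1),\,16a(i+1)]$. The triangle inequality, combined with $Q_i \cap B_i = \emptyset$, forces every point of $R_i$ to sit at distance $\geq 2a$ from the corresponding stretch of $P$. To obtain $Q$ one joins consecutive $R_i$'s by short bridges in the complement of the open neighborhood $N_a(P)$.

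The main obstacle, which I expect to be the crux, is establishing that these bridges exist: if the local portions $R_i$ and $R_{i+1}$ cannot be connected inside $X \setminus N_a(P)$, one must extract from this obstruction a genuine separator of $A$ from $Z$ of diameter at most $K$, centered near $m_i$, and return to case \ref{S}. The proof of this lemma should combine a local cut argument with the shortest-path property of $P$, which constrains how far a competing path can backtrack and hence controls the diameter of the candidate separator. The factor $17$ should record the number of consecutive reference points that must be examined when locating this separator, producing the final bound $a = K/(16 \cdot 17)$.
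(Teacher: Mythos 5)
The central assumption of your plan --- that the shortest path $P=\gamma$ can itself serve as one of the two output paths, so that it only remains to produce a single second path $Q$ with $d(P,Q)\ge a$ --- is false, and it is precisely the difficulty the theorem has to overcome. Consider a space built from a long geodesic $\gamma$ from $A$ to $Z$ together with a sequence of long ``arches'' $b_1,b_2,\dots$, where $b_i$ joins $\gamma(iK)$ to $\gamma((i+2)K)$, stays far from $\gamma$ except near its two feet, and distinct arches are far apart except where their feet force them close. No set of diameter at most $K$ separates $A$ from $Z$ (cutting $\gamma$ near a point leaves intact the arches passing over that point, and these cannot all be met by a set of small diameter), so alternative (i) fails; yet the complement of the $a$-neighbourhood of $\gamma$ decomposes into the pairwise disconnected middle portions of the arches, none of which joins $A$ to $Z$. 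Hence no second $A$--$Z$ path at distance $\ge a$ from all of $\gamma$ exists. Conclusion (ii) still holds in this example, but only because \emph{both} paths are permitted to use pieces of $\gamma$: they must leapfrog along $\gamma$, repeatedly swapping which of them currently occupies $\gamma$ by crossing over via the arches. This is exactly what the paper's proof does: it extracts a perfect crossing sequence of maximal bridges (and then meta-bridges, to absorb bridges that come within $a$ of one another), and finally two-colours bridges and intervals of $\gamma$ alternately to build $\alpha_1,\alpha_2$, neither of which is $\gamma$ itself.

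Two further steps of your outline also would not go through as written. First, the sub-arc $R_i$ of $Q_i$ need not be at distance $\ge 2a$ from $P$: the path $Q_i$ only avoids the ball $B(m_i,8a)$ and may touch $P$ at any point outside that ball, in particular at points whose projection lies in $[16a(i-1),\,16a(i+1)]$. Second, and relatedly, the failure to connect $R_i$ to $R_{i+1}$ inside $X\setminus N_a(P)$ does not yield a separator of $A$ from $Z$ of bounded diameter (again see the arch example, where these connections fail everywhere while no small separator exists), so the fallback to case (i) in your proposed crux lemma is not available.
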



We prepare the proof with some preliminaries. 
We will assume that \ref{S} does not hold and we will show that \ref{paths} holds.

We fix throughout the proof a shortest path $\gamma :[0,\ell]\to X $ joining $A$ to $Z$.
 
\begin{definition}[Bridges] \label{bridge}
A \textit{bridge} $B$ is a union of three successive paths $c_1\cup b\cup c_2$ 
such that:

\begin{itemize}
\item for any $x\in b, y\in \gamma $, we have  $d(x,y)\geq K/8$.

\item If $b\cap A\ne \emptyset $ then $c_1$ is an endpoint of $b$ and similarly if
$b\cap Z\ne \emptyset $ then $c_2$ is an endpoint of $b$; otherwise
$c_1,c_2$ are (geodetic) paths of length $K/8$ joining the endpoints
of $b$ to $\gamma $. 

\end{itemize}

\end{definition}

We call $c_1,c_2$ the \defi{legs} of $B$, and $b$ its \defi{spine}. The two endpoints of $B$ on $\gamma$, if they exist, are denoted by $B^0,B^1$ so that $B^0< B^1$. The \defi{interval} of $B$ is the interval $[B^0,B^1]$ of $\gamma$. If $c_1$ is a point (of $A$) then the interval of $B$ is $[\gamma (0),v]$ where $v$ is the
endpoint of $c_2$, and we define similarly the interval of $B$ when $c_2$ is a point of $Z$.

We say that a bridge is \textit{maximal} if its interval is not contained properly in the interval of another bridge.	
 
\begin{proposition} \label{max}
Let $B$ be a maximal bridge with spine $b$. For any point $x\in b$, let $x'$ be a point of $\gamma$ minimizing $d(x,x')$. Then $x'\in [B^0,B^1]$.
\end{proposition}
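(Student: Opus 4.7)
The plan is to argue by contradiction: assume $x' \notin [B^0,B^1]$, say $x' < B^0$ on $\gamma$, and construct a bridge $\tilde B$ whose interval strictly contains $[B^0,B^1]$, contradicting the maximality of $B$.

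First I would fix the setup. Let $y_1,y_2$ be the endpoints of $b$, with $c_i$ the leg at $y_i$, and assume $c_2$ lands at $B^1$. Take a geodesic $\sigma\colon [0,d(x,x')]\to X$ from $x$ to $x'$; since $x\in b$, we have $d(x,x')=d(x,\gamma)\geq K/8$. Let $w:=\sigma\!\left(d(x,x')-K/8\right)$, so that the final subpath $\sigma|_{[d(x,x')-K/8,\,d(x,x')]}$ is a geodesic of length exactly $K/8$ joining $w$ to $x'$.

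The key calculation is the following pair of distance bounds, both consequences of $x'$ minimising $d(x,\gamma)$. (i) $d(w,\gamma)=K/8$: clearly $d(w,\gamma)\le d(w,x')=K/8$; and if some $w^*\in\gamma$ satisfied $d(w,w^*)<K/8$, the triangle inequality would give $d(x,w^*)\le d(x,w)+d(w,w^*)<d(x,x')$, contradicting the choice of $x'$. (ii) $d(\sigma(s),\gamma)\geq K/8$ for every $s\in [0,\,d(x,x')-K/8]$: otherwise some $\gamma^*\in\gamma$ would give $d(x,\gamma^*)\leq s+d(\sigma(s),\gamma^*) < s+K/8\leq d(x,x')$, again contradicting the minimality of $d(x,x')$.

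Now I would assemble $\tilde B$. Let the spine $\tilde b$ be the concatenation of the subpath of $b$ from $y_2$ to $x$ with $\sigma|_{[0,\,d(x,x')-K/8]}$; its endpoints are $y_2$ and $w$, and by (ii) together with the defining property of $b$ it stays at distance $\geq K/8$ from $\gamma$. Take legs $\tilde c_2:=c_2$ (reaching $B^1$) and $\tilde c_1:=\sigma|_{[d(x,x')-K/8,\,d(x,x')]}$, which by (i) is a valid length-$K/8$ geodesic from $w$ to $x'\in\gamma$. Then $\tilde B=\tilde c_1\cup\tilde b\cup\tilde c_2$ is a bridge with feet $x'$ and $B^1$, so its interval $[x',B^1]$ strictly contains $[B^0,B^1]$ because $x'<B^0$, contradicting maximality of $B$.

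The main obstacle I foresee is not conceptual but bookkeeping: one must check that the concatenated spine $\tilde b$ can be reduced to a simple path between $y_2$ and $w$ without losing the distance bound (any sub-path inherits it), and that the degenerate cases where a leg is a point of $A$ or $Z$, or where $x$ coincides with $y_1$ or $y_2$, or where $d(x,x')=K/8$ (so $w=x$ and the $\sigma$-portion of $\tilde b$ collapses), all still yield a valid bridge with strictly larger interval. These are routine adjustments, but they are the only delicate part of the argument.
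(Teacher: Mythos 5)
Your proposal is correct and follows essentially the same route as the paper: modify $B$ by discarding the portion of the spine beyond $x$ together with its leg, and attach an $x$--$x'$ geodesic, whose last $K/8$ becomes the new leg, to produce a bridge with strictly larger interval, contradicting maximality. Your explicit verification of the distance bounds (i)--(ii) just fills in the details the paper leaves implicit in the phrase ``we can modify $B$ into a bridge $B'$.''
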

\begin{proof}
If $x'\not\in [B^0,B^1]$, say $x'>B^1$, then we can modify $B$ into a bridge $B'$ by adding an $x$--$x'$~geodesic and removing the subpath of $B$ from $x$ to $B^1$. The interval of $B'$ is $[B^0,x']$ which strictly contains $[B^0,B^1]$, contradicting the maximality of $B$.
\end{proof}

We assume that $\gamma $ is parametrized by arc-length. If $a\in \gamma $ and $r>0$, we denote by 
$a-r$  the point at distance $r$ from $a$ on $\gamma $ which is before $a$. We define 
 $a+r$ similarly.

\begin{definition}[Surrounding] \label{surround}
We say that a bridge $B$ $d$-\textit{surrounds} a point $x=\gamma (t)\in \gamma $ if the interval $[t_1,t_2]$ of $B$
contains $x-d$ and $x+d$. Here if $t<d$ or $t>\ell -d$ we set $x-d=\gamma (0)$ and $x+d=\gamma (\ell)$ respectively.
\end{definition}

We restrict our attention now to maximal bridges.

\begin{definition}[Order] \label{order}
Let  $B_1, B_2$ be maximal bridges. 
If $B_2^0 > B_1^0$ 
(in which case $B_2^1 > B_1^1$ holds too by maximality), we write $B_2>B_1$. 
\end{definition}

For $R\in \R$, we say that $B_2$ \defi{$R$-crosses}  $B_1$ if $B_2$ $R$-surrounds $B_1^1$. 

\begin{lemma} \label{bridge}
Assume condition \ref{S} of \Tr{Menger} fails. Then for any $x\in \gamma $ there is some bridge $B$ that $3K/8$-surrounds $x$. 
\end{lemma}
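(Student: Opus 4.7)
The plan is to prove the contrapositive: assume that no bridge $3K/8$-surrounds $x = \gamma(t_0)$ and derive that the closed ball $S$ of radius $K/2$ around $x$ separates $A$ from $Z$ in $X$; since $\diam(S) \le K$, this contradicts the running assumption that condition~\ref{S} of Theorem~\ref{Menger} fails. So suppose $P:[0,L]\to X\setminus S$ is an $A$--$Z$ path avoiding $S$, meaning $d(P(s),x)\ge K/2$ for all $s$; the goal is to extract a bridge $3K/8$-surrounding $x$ from $P$.

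The starting auxiliary fact is that, since $\gamma$ is a shortest $A$--$Z$ path, $d(A,Z)=\ell$, and hence any nearest point on $\gamma$ to $P(0)$ has $\gamma$-parameter at most $d(P(0),\gamma)$, and symmetrically at $P(L)$. Thus the endpoints of $P$ project near $\gamma(0)$ and $\gamma(\ell)$ respectively. Next, decompose $[0,L]$ using the closed $K/8$-tube $N := \{y \in X : d(y,\gamma)\le K/8\}$, and set $T := P^{-1}(N)$. For $s\in T$ a triangle-inequality estimate shows that every nearest point of $\gamma$ to $P(s)$ is at $\gamma$-distance $\ge 3K/8$ from $t_0$, hence lies either in $\gamma([0,t_0-3K/8])$ (call this the $L$-side) or in $\gamma([t_0+3K/8,\ell])$ (the $R$-side); a second triangle-inequality estimate, using that $\gamma$ is a geodesic, rules out both sides at once. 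This gives a well-defined side function $\sigma : T \to \{L,R\}$, and the same estimate shows that two points $s,s'\in T$ of opposite $\sigma$-values satisfy $d(P(s),P(s'))\ge K/2$; by uniform continuity of $P$ on $[0,L]$, $\sigma$ is locally constant on $T$.

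To finish, the projection estimate forces, in the interior regime $K/2\le t_0\le \ell-K/2$, that $\sigma = L$ on the component of $T$ containing $0$ and $\sigma = R$ on the one containing $L$; by local constancy there must be two consecutive components $I_j,I_{j+1}$ of $T$ with opposite $\sigma$-values. The subpath of $P$ between them is an excursion outside $N$ whose interior stays at distance $>K/8$ from $\gamma$ and whose two endpoints lie at distance exactly $K/8$ from $\gamma$; attaching $K/8$-geodesic legs to $\gamma$ at these endpoints produces a bridge in the sense of the definition above. By construction, its two legs have feet on opposite sides of $x$ along $\gamma$, each at $\gamma$-distance $\ge 3K/8$ from $t_0$, so the bridge's interval contains $[x-3K/8,x+3K/8]$ and it $3K/8$-surrounds $x$, the desired contradiction.

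The main obstacle is the endpoint case $t_0<K/2$ or $t_0>\ell-K/2$, where the projection estimate no longer forces $\sigma(0)=L$ or $\sigma(L)=R$. Here $x$ is close to $\gamma(0)$ or $\gamma(\ell)$ and one exploits the clause of the bridge definition that permits a spine to touch $A$ or $Z$ directly: the first excursion of $P$ out of $N$ (or the initial segment of $P$ itself, if $P$ already starts outside $N$) yields a bridge with a degenerate leg on the $A$-side, and the surround condition degenerates (by the convention $x-3K/8=\gamma(0)$ when $t_0<3K/8$) to merely requiring the non-degenerate leg to land at $\gamma$-parameter $\ge t_0+3K/8$; the situation on the $Z$-side is symmetric. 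Verifying this requires no new idea beyond the side-function bookkeeping of the main case, but it is the only part of the argument that is not a purely mechanical consequence of the two triangle-inequality estimates above.
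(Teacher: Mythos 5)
Your proof is correct and is essentially the paper's own argument run in contrapositive form: both take an \pth{A}{Z}\ avoiding $\ball_x(K/2)$ (which exists since that ball has diameter at most $K$) and extract, by a connectedness/intermediate-value argument, a subpath running from the $K/8$-neighbourhood of the part of $\gamma$ before $x$ (or from $A$ itself) to the $K/8$-neighbourhood of the part after $x$ (or to $Z$), with interior clear of both, then attach $K/8$-legs; your side function $\sigma$ is exactly the paper's choice of a first $A'$--$Z'$ subpath with interior disjoint from $A'\cup Z'$. One bookkeeping remark: the case $P(0)\notin N$ (or $P(L)\notin N$) can occur for any $t_0$, not only in the endpoint regime $t_0<K/2$ or $t_0>\ell-K/2$, so the degenerate-leg device you reserve for that regime is also needed in the interior case --- but it applies there verbatim and costs nothing.
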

\begin{proof}
The ball $\ball_x(3K/8)$ separates $\gamma$ into two components $\gamma_A$ and $\gamma_Z$. Let $A':= A \cup \{y\in X \mid d(y, \gamma_A) \leq K/8 \}$, and define $Z'$ analogously. Let $\alpha$ be a shortest path in $X \sm \ball_x(K/2)$ joining $A\cup \gamma_A$ to $Z\cup \gamma_Z$, which exists by our assumption. Note that $\alpha$ contains at least one subpath $b$ from a point $p\in A'$ to a point $q\in Z'$ the interior of which is disjoint from $A' \cup Z'$. If $p\in A$ we let $c_1$ be the trivial path containing just $p$, otherwise we let $c_1$ be a geodesic from $p$ to $\gamma_A$. Define $c_2$ analogously.
 
We claim that $c_1 \cup b \cup c_2$ is a bridge that $3K/8$-surrounds $x$. Indeed, notice that each $c_i$ is either a singleton or has length $K/8$ and it starts outside $\ball_x(K/2)$, therefore it avoids $\ball_x(3K/8)$. Moreover, for every $z\in b$, we have  
$d(z, \gamma_A \cup \gamma_Z) \geq K/8$ and $d(z,\ball_x(3K/8))\geq K/8$. Since $\gamma\subseteq \gamma_A \cup \gamma_Z \cup \ball_x(3K/8)$, it follows that $d(z,\gamma )\geq K/8$, and our claim is proved. 

\end{proof}

\begin{lemma} \label{disjoint}

Let $B<C$ be maximal bridges with 
$C^0> B^1$.
Assume that there are $x\in B, y\in C$ such that $d(x,y)\leq K/4$. Then  $d(B^1,C^0)< 3K/4$.  
\end{lemma}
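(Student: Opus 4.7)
The goal is to prove $d(B^1, C^0) = C^0 - B^1 < 3K/4$, where the equality uses that $\gamma$ is a geodesic in $X$. The plan is to bound $d(B^1, C^0)$ by travelling through suitable projections of $x$ and $y$ onto $[B^0, B^1]$ and $[C^0, C^1]$ respectively. For $x$ on a leg of $B$, the leg's $\gamma$-endpoint $p_x \in \{B^0, B^1\}$ already lies in $[B^0, B^1]$ and satisfies $d(x, p_x) \le K/8$; for $x \in b^B$, Proposition~\ref{max} supplies a closest point $x^* \in \gamma$ of $x$ inside $[B^0, B^1]$. Either way I can choose $\tilde x \in [B^0, B^1]$ with $d(x, \tilde x) \le K/8$ in the leg case or $d(x, \tilde x) = d(x, \gamma)$ in the spine case, and similarly $\tilde y \in [C^0, C^1]$. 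Since $\tilde x \le B^1 < C^0 \le \tilde y$ along the geodesic $\gamma$,
\[
d(B^1, C^0) \le \tilde y - \tilde x = d(\tilde x, \tilde y) \le d(\tilde x, x) + d(x, y) + d(y, \tilde y) \le d(\tilde x, x) + K/4 + d(y, \tilde y),
\]
so it will suffice to establish $d(\tilde x, x) + d(y, \tilde y) < K/2$.

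I would split into three cases according to whether $x, y$ lie on legs or spines. If both lie on legs, the sum is immediately at most $K/4 < K/2$. If exactly one lies on a spine---say $x \in b^B$ and $y$ on a leg of $C$ with $\gamma$-endpoint $p \in \{C^0, C^1\}$---then $d(y, p) \le K/8$ forces $d(x, p) \le 3K/8$. Because $p \ge C^0 > B^1$ lies strictly outside $[B^0, B^1]$, Proposition~\ref{max} rules out $p$ as a closest point of $\gamma$ to $x$, so $d(x, \gamma) < d(x, p) \le 3K/8$, yielding $d(\tilde x, x) + d(y, \tilde y) < 3K/8 + K/8 = K/2$. The symmetric subcase, $y \in b^C$ with $x$ on a leg of $B$, is handled by Proposition~\ref{max} applied to $C$.

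The key difficulty is the double-spine case $x \in b^B, y \in b^C$, where I plan to use the maximality of $B$ directly. Let $\sigma$ be an $x$--$y$ geodesic in $X$, of length at most $K/4$, and form a candidate $B'$ by concatenating $c_1^B$, the portion of $b^B$ from its $c_1^B$-end to $x$, then $\sigma$, then the portion of $b^C$ from $y$ to its $c_2^C$-end, and finally $c_2^C$. Its $\gamma$-interval would be $[B^0, C^1] \supsetneq [B^0, B^1]$. The inherited subarcs of $b^B, b^C$ already sit at distance $\ge K/8$ from $\gamma$, so the only way $B'$ can fail to be a bridge is if some $w \in \sigma$ satisfies $d(w, \gamma) < K/8$; were that not the case, $B'$ would be a valid bridge contradicting the maximality of $B$. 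Picking such a $w$ with nearest $w^* \in \gamma$ (so $d(w, w^*) < K/8$), and using $d(x, w) + d(y, w) = d(x, y)$ because $w$ lies on the geodesic $\sigma$,
\[
d(x, \gamma) + d(y, \gamma) \le d(x, w^*) + d(y, w^*) \le d(x, w) + d(y, w) + 2\, d(w, w^*) < d(x, y) + K/4 \le K/2,
\]
which is exactly what is needed. The main technical point I anticipate is confirming that the candidate $B'$ satisfies the bridge definition in all regimes: the legs are inherited wholesale from $B$ and $C$, the spine condition reduces to the dichotomy above, and the degenerate situation in which a leg is forced to be a singleton by $A$ or $Z$ should require only cosmetic adjustments.
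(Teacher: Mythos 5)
Your proposal is correct and follows essentially the same route as the paper's proof: project $x,y$ to closest points in the intervals $[B^0,B^1]$, $[C^0,C^1]$ via Proposition~\ref{max}, use maximality to force a point of the connecting geodesic within $K/8$ of $\gamma$, and conclude by the triangle inequality. Your version merely makes explicit the leg/spine case distinction that the paper's one-shot argument glosses over.
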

\begin{proof} Consider a shortest path $\alpha $ joining $x,y$, and let $|\alpha|\leq K/4$ denote its length. Let $x'\in[B^0,B^1]$, $y'\in [C^0,C^1]$ be closest points to $x,y$ respectively
on $\gamma$, where we used \Prr{max}. By the maximality of the bridges, there is a point $z$ on $\alpha $ such that $d(z,z')<K/8$ for some $z'\in \gamma $. Notice that $d(x,x') \leq d(x,z') < |x \alpha z| + K/8$, where $x \alpha z$ denotes the subpath of $\alpha$ from $x$ to $z$. Similarly, we have $d(y,y') < |z \alpha y| + K/8$.
Then $$d(x',y')\leq d(x,x')+d(x,y)+d(y,y')< 2K/8 + 2 |\alpha| \leq K/4 + 2K/4.$$
It follows that $d(B^1,C^0) < 3K/4$.
\end{proof}

We choose a smallest set $\mathcal U$ of maximal bridges such that any $x\in \gamma $ is $K/4$-surrounded by some 
bridge in $\mathcal U$, and 
\labtequ{length}{every $B\in \cu$ has an interval of length at least $3K/4$.} 
Such a \cu\ exists by \Lr{bridge}. We remark that $\mathcal U$ is finite because $\gamma$ is compact, and any bridge that $3K/4$-surrounds $x\in \gamma $ also $K/4$-surrounds any point in an open interval of $x$ on $\gamma$.

The minimality of $\mathcal U$ implies that the initial points of its bridges are not too dense on $\gamma$:

\begin{proposition} \label{U i}  
Let $B,C,D$ be bridges in $\mathcal U$. Suppose that
		 $C$ $r$-crosses $B$, and $D$ $r$-crosses $C$ for some $r\leq K/4$. Then $d(B^0,D^0)\geq r$. 
\end{proposition}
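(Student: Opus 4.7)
The plan is to argue by contradiction using the minimality of $\cu$: assuming $d(B^0,D^0)<r$, I will show that $\cu\setminus\{C\}$ still $K/4$-surrounds every point of $\gamma$, contradicting the choice of $\cu$ as a smallest such set. The core task is therefore to prove that every point of $\gamma$ which $C$ $K/4$-surrounds is already $K/4$-surrounded by $B$ or by $D$.

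First I would establish the ordering $B<C<D$ on $\gamma$. Since $r>0$, the hypothesis that $C$ $r$-surrounds $B^1$ yields $C^1\geq B^1+r>B^1$, and by \Dr{order} (in which the ordering of $B^0$'s agrees with that of $B^1$'s by maximality) this forces $C>B$; symmetrically $D>C$. Hence $B^0<C^0<D^0$ and $B^1<C^1<D^1$. Parametrising $\gamma$ by arc-length and using that any subpath of the shortest path $\gamma$ is itself a geodesic, we get $d(B^0,D^0)=D^0-B^0$, so the contradiction hypothesis reads $D^0-B^0<r$. Combining this with the length bound \eqref{length}, namely $B^1-B^0\geq 3K/4$, gives
\[
B^1 - D^0 \;=\; (B^1-B^0) - (D^0-B^0) \;>\; \frac{3K}{4} - r \;\geq\; \frac{K}{2},
\]
where the last inequality uses $r\leq K/4$.

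Now let $x=\gamma(t)$ be any point $K/4$-surrounded by $C$, and split into cases on whether $t\leq B^1-K/4$ or not. In the first case, $B^1\geq t+K/4$ is the right-hand surround condition for $B$, and the left-hand condition $B^0\leq\max(t-K/4,0)$ follows from $B^0\leq C^0$ together with the corresponding condition for $C$. In the second case, the displayed inequality gives $t>B^1-K/4\geq D^0+K/4$, so the left-hand condition $D^0\leq t-K/4$ for $D$ holds; the right-hand condition $D^1\geq\min(t+K/4,\ell)$ follows from $D^1>C^1$ together with the right-hand condition for $C$. In either case $x$ is $K/4$-surrounded by some element of $\cu\setminus\{C\}$, so $\cu\setminus\{C\}$ is a smaller admissible set than $\cu$, the desired contradiction.

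The main (and rather minor) obstacle I anticipate is keeping track of the endpoint clamping built into \Dr{surround} when $t<K/4$ or $t>\ell-K/4$, but thanks to the sandwiching $B^0\leq C^0$ and $C^1\leq D^1$ these clamping cases transfer automatically from $C$ to $B$ or to $D$, so they amount to routine bookkeeping rather than a genuine difficulty.
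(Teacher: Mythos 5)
Your proof is correct and follows essentially the same route as the paper: assume $d(B^0,D^0)<r$, use \eqref{length} to place $D^0$ well inside the interval of $B$, and show every point $K/4$-surrounded by $C$ is already surrounded by $B$ or $D$, contradicting the minimality of $\cu$. The only difference is cosmetic — you extract the single inequality $B^1-D^0>K/2$ up front and verify the surround conditions directly, whereas the paper runs its case split through triangle-inequality contradictions with \eqref{length}; both handle the endpoint clamping equally (im)plicitly.
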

\begin{proof}
Suppose $d(B^0,D^0)< r$. This implies $D^0 \in [B^0,B^1]$ by \eqref{length}. We claim that any point that is $K/4$-surrounded by $C$ is also $K/4$-surrounded by either $B$ or $D$, contradicting
the minimality of $\mathcal U$. So suppose $p$ is $K/4$-surrounded by $C$. Then $d(D^1,p)> K/4$ as $D^1> C^1$. Moreover, we have $p>D^0$ because $d(C^0,p)\geq K/4$ and $d(C^0,D^0)< r$ by our assumptions.

We consider two cases. If $p>B^1$, then either $d(D^0,p)\geq K/4$, in which case $p$ is $K/4$-surrounded by $D$ and we are done, or $d(D^0,B^1)< d(D^0,p) < K/4$. In the latter case, however, the triangle inequality yields $d(B^0,B^1) \leq d(B^0,D^0) + d(D^0,B^1) < K/2$, contradicting \eqref{length}. 

It remains to consider the case where $p\leq B^1$. As above, we obtain  $d(D^0,p)< K/4$ unless $p$ is $K/4$-surrounded by $D$. Similarly, we have  $d(p,B^1)< K/4$ unless $p$ is $K/4$-surrounded by $B$. But then the triangle inequality yields $d(B^0,B^1) \leq d(B^0,D^0) + d(D^0,p) + d(p,B^1) < 3K/4$, again contradicting \eqref{length}. (This last inequality is the reason why we chose \cu\ so that any $x\in \gamma $ is $K/4$-surrounded, sacrificing some of the $3K/8$ provided by \Lr{bridge}.)
\end{proof}

A sequence $\cb= B_1, \ldots B_n$ of bridges is said to be \defi{$R$-crossing} for some $R\in \R$, if $B_1$ $R$-surrounds $\gamma(0)$, each $B_i, i\geq 2$ $R$-crosses $B_{i-1}$, and $B^1_n\in Z$. Note that $B_n$ must $R$-surround $\gamma(1)$ as $\gamma(1)> B^1_{n-1}$.
Such a sequence is \defi{perfect}, if in addition its initial points are $R$-apart, i.e.\ $d(B_i^0,B_j^0)\geq R$ holds \fe\ $1\leq i,j \leq n$.

\smallskip
\noindent {\it {\bf Remark:} A perfect $R$-crossing sequence \cb\ remains a perfect $R$-crossing sequence if we exchange the roles of $A$ and $Z$ and reverse the direction of $\gamma$ and the ordering of \cb. But we will not use this fact.}

\smallskip
It is straightforward to check that if we order the elements of \cu\ by $<$ then the resulting sequence $S$ is a $K/4$-crossing sequence. Our next lemma says that we can obtain a perfect subsequence by sacrificing half of our crossing constant.
\begin{lemma} \label{initials} 
Let $S= \{B_i\}_{1\leq i \leq n}$ be a $K/4$-crossing sequence of bridges.  Then there is a perfect $K/8$-crossing subsequence $S'$ of $S$. 
\end{lemma}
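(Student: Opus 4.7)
The plan is a greedy construction. I set $B_{i_1}:=B_1$, which $K/8$-surrounds $\gamma(0)$ since it already $K/4$-surrounds it. Given $B_{i_k}$, I define $i_{k+1}$ to be the smallest index $j>i_k$ with $d(B_{i_k}^0,B_j^0)\geq K/8$, stopping when $B_{i_k}=B_n$, which guarantees $B_{i_k}^1\in Z$.

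The pairwise $K/8$-apart condition follows at once from the greedy rule together with the fact that all $B_i^0$ lie on the geodesic $\gamma$: ambient distance between points of $\gamma$ equals their arc-length difference, so consecutive $K/8$-apartness of the selected $B_{i_k}^0$ yields pairwise $K/8$-apartness. The upper half of the $K/8$-surround condition, namely $B_{i_{k+1}}^1\geq B_{i_k}^1+K/8$, is also automatic, since already $B_{i_k+1}^1\geq B_{i_k}^1+K/4$ and the sequence $B_j^1$ is monotone increasing.

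The substantive step is the lower half of the surround condition, $B_{i_{k+1}}^0\leq B_{i_k}^1-K/8$. If $i_{k+1}=i_k+1$, this is exactly the $K/4$-crossing property of $S$. If $i_{k+1}>i_k+1$, then $d(B_{i_k}^0,B_{i_k+1}^0)<K/8$, and applying \Prr{U i} to the triple $B_{i_k},B_{i_k+1},B_{i_k+2}\in\mathcal U$ (with $r=K/4$) gives $d(B_{i_k}^0,B_{i_k+2}^0)\geq K/4$, so no further skipping is needed and $i_{k+1}=i_k+2$. Using $B_{i_k+2}^0\leq B_{i_k+1}^1-K/4$ from the original $K/4$-crossing, the desired bound then reduces to establishing $B_{i_k+1}^1\leq B_{i_k}^1+K/8$.

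The main obstacle lies in this last inequality. I would derive it via a minimality/redundancy argument paralleling the proof of \Prr{U i}: if $B_{i_k+1}^0$ sits within $K/8$ of $B_{i_k}^0$ while $B_{i_k+1}^1$ exceeds $B_{i_k}^1+K/8$, then every $x\in\gamma$ that $B_{i_k}$ $K/4$-surrounds is already $K/4$-surrounded by $B_{i_k+1}$ (whose interval is shifted only slightly to the right, yet stretches far further) together with one of its neighbours in $\mathcal U$, so $B_{i_k}$ could be removed from $\mathcal U$ without breaking the cover, contradicting minimality. Making this covering computation precise at the borderline values---and, if necessary, supplementing the greedy by a local replacement step to handle configurations where the redundancy would instead favour removing $B_{i_k+1}$---is the delicate part of the proof.
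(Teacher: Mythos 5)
There is a genuine gap, and it sits exactly where you flagged it. Your greedy rule discards, from each pair of bridges whose initial points are less than $K/8$ apart, the \emph{later} bridge $B_{i_k+1}$ and keeps the earlier one $B_{i_k}$. This is the wrong member of the pair to discard: nothing bounds $B_{i_k+1}^1 - B_{i_k}^1$ from above, so the inequality $B_{i_k+1}^1\leq B_{i_k}^1+K/8$ that your argument reduces to can fail by an arbitrary amount, and then $B_{i_k+2}^0\leq B_{i_k+1}^1-K/4$ tells you nothing about $B_{i_k+2}^0$ relative to $B_{i_k}^1$, i.e.\ the crossing chain genuinely breaks at the skip. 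The minimality argument you sketch to rescue this does not close: if $d(B_{i_k}^0,B_{i_k+1}^0)<K/8$ and $B_{i_k+1}^1>B_{i_k}^1+K/8$, the points of $[B_{i_k}^0+K/4,\,B_{i_k+1}^0+K/4)$ --- a nonempty interval of length up to $K/8$ --- are $K/4$-surrounded by $B_{i_k}$ but not by $B_{i_k+1}$, and there is no reason a neighbouring bridge of $\mathcal U$ must cover them; so $B_{i_k}$ need not be redundant and minimality of $\mathcal U$ is not contradicted. (Note also that the lemma as stated is about an arbitrary $K/4$-crossing sequence $S$, so an argument leaning on minimality of $\mathcal U$ would in any case be proving something narrower.)

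The repair is to make the opposite choice, which is what the paper does. First, exactly as you observe via \Prr{U i}, no three consecutive initial points can be pairwise $<K/8$ apart, so the close pairs of initial points form a matching. For each close pair delete the bridge $B_x$ with the \emph{smaller} initial point, keeping $B_y$ with $B_y^0<B_x^0+K/8$ and $B_y^1>B_x^1$. The survivor's interval starts at most $K/8$ further right and ends no earlier, so every point that was $K/4$-surrounded by $B_x$ is $K/8$-surrounded by $B_y$, and the surviving subsequence is perfect and $K/8$-crossing with no case analysis at the skips. Your version would work if you replaced the "keep the first, skip the next" rule by "when the next initial point is within $K/8$, replace the current bridge by that next one".
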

\begin{proof}
We claim that 
\labtequ{xyz}{there are no three  initial points $x < y < z$ of elements of $S$ \st\ $d(x,y), d(y,z) < K/8$.} Indeed, if such a triple exists, then we may assume that no other initial point of an element of $S$ lies in $[x,z]$ by choosing $x,y,z$ as close to each other as possible. It follows that there is $i\leq n$ \st\ $x= B_i^0, y= B_{i+1}^0, z= B_{i+2}^0$, and so $B_{i+1}$ $K/4$-crosses $B_i$ and $B_{i+2}$ $K/4$-crosses $B_{i+1}$. This contradicts \Prr{U i}, and so our claim \eqref{xyz} is proved.

Thus the pairs $\{x,y\}$ of initial points of elements of $S$ with $d(x,y) < K/8$ form a matching, i.e.\ no point participates in more than one such pair. For every such pair with $x<y$, remove the bridge $B_x$ with initial point $x$ from $S$, and let  $S'$ be the remaining subsequence of $S$. Clearly, we cannot have a pair $\{x,y\}$ as above in $S'$. To see that $S'$ is $(R-K/8)$-crossing, notice that whenever we removed a bridge $B_x$ from $S$, we left a bridge $B_y$ in $S'$ such that $B_y^0 < B_x^0 + K/8$ and  $B_y^1 > B_x^1$. Thus any $p\in \gamma$ that was $R$-surrounded by $B_x$ is $(R-K/8)$-surrounded by $B_y$.
\end{proof}

\begin{proposition} \label{U ii}   
Suppose that $B_1,B_2,B_3,B_4,B_5$ are bridges 
		such that each $B_i, i\geq 2$, $K/8$-crosses $B_{i-1}$, 
		and $B_i^0$ lies in the interval of $B_1$ for every $2\leq i \leq 5$.  Then at least one $B_i$ does not lie in $\mathcal U$.
\end{proposition}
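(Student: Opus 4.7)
We argue by contradiction. Suppose that all five bridges $B_1,\ldots,B_5$ lie in $\mathcal{U}$, aiming to contradict the minimality of $\mathcal{U}$ by exhibiting a bridge (namely $B_3$) that is redundant. The chain of $K/8$-crossings iterates to give $B_j^1 \geq B_1^1 + (j-1)K/8$ for each $j \geq 2$, so in particular $B_3^1 \geq B_1^1 + K/4$ and $B_5^1 \geq B_1^1 + K/2$, while the hypothesis places all $B_j^0$ (for $j \geq 2$) in the window $[B_1^0, B_1^1]$. Applying \Prr{U i} to each consecutive triple $(B_i, B_{i+1}, B_{i+2})$ with $r=K/8$ gives $B_{i+2}^0 - B_i^0 \geq K/8$ for $i=1,2,3$; in particular $B_5^0 - B_1^0 \geq K/4$.

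Next I try to show that every point $p$ that is $K/4$-surrounded by $B_3$ is also $K/4$-surrounded by one of $B_2$ or $B_4$. The two extreme cases are routine: if $p \leq B_2^1 - K/4$ then $B_2$ covers $p$ since $B_2^0 < B_3^0 \leq p - K/4$; and if $p \geq B_4^0 + K/4$ then $B_4$ covers $p$ since $p + K/4 \leq B_3^1 \leq B_4^1 - K/8$. The only problematic range is the middle window $p \in (B_2^1 - K/4, B_4^0 + K/4)$, which is non-empty only when $B_4^0 > B_2^1 - K/2$; if this inequality fails, $B_3$ is already redundant, contradicting minimality. Applying the same reasoning to the essentiality of $B_2$ and $B_4$ in $\mathcal{U}$ yields the symmetric inequalities $B_3^0 > B_1^1 - K/2$ and $B_5^0 > B_3^1 - K/2$.

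The endgame is to combine these inequalities with the earlier bounds to derive a contradiction. The inequality $B_5^0 > B_3^1 - K/2$ together with $B_5^0 \leq B_1^1$ forces $B_3^1 < B_1^1 + K/2$, so $B_3$'s entire interval sits in a window of length less than $K$ around $B_1^1$. Now apply \Prr{U i} to the non-consecutive triple $(B_1, B_3, B_5)$ with the maximal crossing parameter $r = \min(K/4, B_1^1 - B_3^0)$ --- here $B_3^1 - B_5^0 \geq K/4$ and $B_5^1 - B_3^1 \geq K/4$ come for free from the first step, so $B_5$ genuinely $r$-crosses $B_3$, and $B_3$ $r$-crosses $B_1$ by the choice of $r$. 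The conclusion $B_5^0 - B_1^0 \geq r$, combined with $B_5^0 \leq B_1^1$ and the constraint $B_3^0 > B_1^1 - K/2$, should pin $B_3^0$ and $B_5^0$ into a window that violates the bound $B_5^0 - B_3^0 \geq K/8$ established earlier.

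I anticipate the main obstacle to be this last step: extracting the explicit numerical contradiction from the tangled system of linear inequalities on the endpoints $B_i^0, B_i^1$. The constants $K/8$, $K/4$, $K/2$, and the length bound $3K/4$ from \eqref{length} are all in play simultaneously, and the precise derivation may need an additional case distinction on whether $B_1^1 - B_3^0$ exceeds $K/4$ or not (corresponding to which term in the $\min$ defining $r$ is active).
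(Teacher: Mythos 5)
Your reduction to a system of linear inequalities on the endpoints does not close. The final application of \Prr{U i} to the triple $(B_1,B_3,B_5)$ with $r=\min(K/4,\,B_1^1-B_3^0)$ only yields $B_5^0-B_1^0\geq r$ with $r\leq K/4$, which is already implied by your earlier conclusion $B_5^0-B_1^0\geq K/4$ from the consecutive triples, so it contributes nothing. Worse, the whole system you have assembled is satisfiable, so no amount of further manipulation will extract a contradiction. Take $K=8$ (so $K/8=1$ and $3K/4=6$) and let the intervals $[B_i^0,B_i^1]$ be $[0,10]$, $[1,11]$, $[7,13]$, $[8,14]$, $[9.5,15.5]$: all lengths are at least $6$, each $B_i$ $1$-crosses $B_{i-1}$, all $B_i^0$ lie in $[0,10]$, the separations $B_3^0-B_1^0$, $B_4^0-B_2^0$, $B_5^0-B_3^0\geq 1$ from \Prr{U i} hold, and your three ``window nonempty'' inequalities $B_3^0>B_1^1-K/2$, $B_4^0>B_2^1-K/2$, $B_5^0>B_3^1-K/2$ hold as well; indeed the point $p=9.5$ is $K/4$-surrounded by $B_3$ and by none of the other four. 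So the obstacle you flag in your last paragraph is not a computational nuisance but fatal to this route: a two-bridge cover of $B_3$ necessarily leaves a middle window, and the inequalities expressing that the windows of $B_2,B_3,B_4$ are all nonempty are mutually consistent with everything else.

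The missing idea is to cover with three bridges rather than two. The paper shows that $B_4$ (not $B_3$) is redundant: any point it surrounds that lies in $[B_4^0,B_1^1]$ is surrounded by $B_2$, any such point in $[B_1^1,B_2^1]$ is surrounded by $B_3$, and any such point in $[B_2^1,B_4^1]$ is surrounded by $B_5$. The middle window that defeats a two-bridge cover is exactly what the third bridge absorbs, and the margins needed are supplied by the hypothesis $B_5^0\leq B_1^1$ together with the two separations $d(B_2^0,B_4^0)\geq K/8$ and $d(B_3^0,B_5^0)\geq K/8$, which you already obtained from \Prr{U i}. (The paper runs this covering at the $K/8$ scale matching the crossing hypothesis.) Your first two steps---iterating the crossings and applying \Prr{U i} to consecutive triples---coincide with the paper's; it is only the endgame that needs to be replaced.
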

\begin{proof}
We will show that any point $K/8$-surrounded by $B_4$ is $K/8$-surrounded by either $B_2$, $B_3$, or $B_5$, so the result follows by the minimality of $\mathcal U$. 
 
For this, notice that our assumptions impose the ordering 
$$B_1^0 <  B_2^0 <  B_3^0 < B_4^0 < B_5^0 < B_1^1 < B_2^1< B_3^1 < B_4^1 < B_5^1.$$
The definition of crossing implies that  $d(B_{i-1}^1, B_i^j) \geq K/8$ for every $i\in \{2,5\}$ and $j\in \{0,1\}$.
Moreover, applying \Prr{U i} twice, once on $B_2,B_3,B_4$, and once on $B_3,B_4,B_5$, we also obtain $d(B_2^0,B_4^0), d(B_3^0,B_5^0)\geq K/8$. 

\begin{figure} 
\begin{center}
\begin{overpic}[width=1\linewidth]{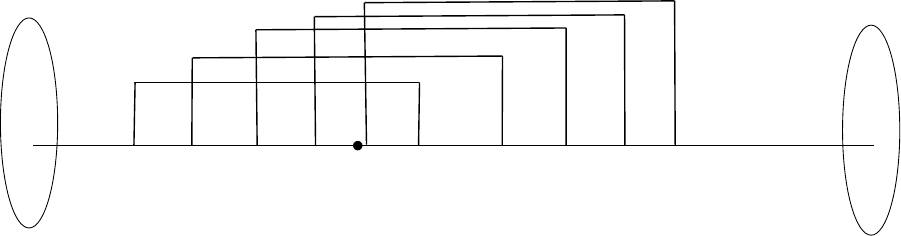} 
\put(2,26){$A$}
\put(96,25){$Z$}
\put(39,7){$p$}
\put(12,7){$B_1^0$}
\put(20,7){$B_2^0$}
\put(27,7){$B_3^0$}
\put(33,7){$B_4^0$}
\put(44,7){$B_1^1$}
\put(54,7){$B_2^1$}
\put(61,7){$B_3^1$}
\put(68,7){$B_4^1$}
\put(73.5,7){$B_5^1$}
\put(83,12){$\gamma$}
\end{overpic}
\end{center}
\caption{One of the possibilities in the proof of \Prr{U ii}.} \label{figBridges}
\end{figure}

Suppose now that a point $p$ is $K/8$-surrounded by $B_4$, and so $p\in [B_4^0,B_4^1]$. 
If $p\in [B_4^0,B_1^1]$ (\fig{figBridges}), then it is $K/8$-surrounded by $B_2$, because $d(B_2^0,B_4^0), d(B_1^1,B_2^1) \geq K/8$.
If $p\in [B_1^1,B_2^1]$, then it is $K/8$-surrounded by $B_3$, because $d(B_2^1,B_3^1) \geq K/8$ and $d(B_3^0,B_1^1)> d(B_3^0,B_5^0) \geq K/8$.
Finally, if $p\in [B_2^1,B_4^1]$, then it is $K/8$-surrounded by $B_5$, because $d(B_5^0,B_2^1)> d(B_0^1,B_2^1) \geq K/8$ and $d(B_4^1,B_5^1)\geq K/8$. This proves our claim that $p$ is always $K/8$-surrounded by either $B_2$, $B_3$, or $B_5$, contradicting the minimality of \cu.
\end{proof}

\begin{lemma} \label{U iii} 
Suppose that $B_i:i=1,\ldots,n$ are bridges in $\mathcal U$ such that $B_i$ crosses $B_{i-1}$ for $i=2,\ldots,n$, and that 
		there are $x\in B_1, y\in B_n$ such that $d(x,y)<K/4$.  Then $n\leq 8$.
\end{lemma}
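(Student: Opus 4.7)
The plan is to suppose $n \geq 9$ and derive a contradiction by showing that the initial points $B_i^0$ must spread out along $\gamma$ faster than the closeness hypothesis $d(x,y) < K/4$ permits.

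First, I would apply \Prr{U ii} to $B_1, B_2, B_3, B_4, B_5$. Since all five lie in $\mathcal U$ and each consecutive pair crosses (hence in particular $K/8$-crosses), the contrapositive of \Prr{U ii} says that some $B_i^0$ with $2 \leq i \leq 5$ must lie outside the interval $[B_1^0, B_1^1]$; combined with the $<$-ordering of bridges in $\mathcal U$ (so $B_i^0 \geq B_1^0$), this forces $B_i^0 > B_1^1$, and hence $B_5^0 \geq B_i^0 > B_1^1$. Applying \Prr{U ii} in the same way to $B_5, B_6, B_7, B_8, B_9$ gives $B_9^0 > B_5^1$.

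Next, combining these with \eqref{length} (every bridge in $\mathcal U$ has interval of length at least $3K/4$) yields, for every $n \geq 9$,
$$B_n^0 \;\geq\; B_9^0 \;>\; B_5^1 \;\geq\; B_5^0 + \tfrac{3K}{4} \;>\; B_1^1 + \tfrac{3K}{4}.$$
Because $\gamma$ is a geodesic parametrised by arc-length, metric distances between points of $\gamma$ agree with arc-length differences, so $d(B_1^1, B_n^0) > 3K/4$.

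Finally, the hypothesis $d(x,y) < K/4$ with $x \in B_1$, $y \in B_n$, together with $B_n^0 > B_1^1$, activates \Lr{disjoint}, which yields $d(B_1^1, B_n^0) < 3K/4$. This contradicts the previous inequality, forcing $n \leq 8$. The main subtlety is the tightness of the constants: a single application of \Prr{U ii} only places $B_5^0$ beyond $B_1^1$, leaving no numerical room to contradict the $3K/4$ bound from \Lr{disjoint}; the two successive applications of \Prr{U ii} are what create the required extra $3K/4$ gap through the length bound \eqref{length}, yielding the correct threshold $n = 9$.
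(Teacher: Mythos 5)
Your proposal is correct and follows essentially the same route as the paper: two applications of \Prr{U i}\!i-type separation (to $B_1,\dots,B_5$ and to $B_5,\dots,B_9$) to push $B_n^0$ beyond $B_5^1$, then \eqref{length} and \Lr{disjoint} to squeeze the interval of $B_5$ and obtain the contradiction. The paper phrases the final step as ``the interval of $B_5$ has length less than $3K/4$'' rather than as your inequality chain $d(B_1^1,B_n^0)>3K/4$, but these are the same estimate.
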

\begin{proof}
Suppose $n\geq 9$. By \Prr{U ii}, the intervals of $B_1,B_5$ are disjoint. By the same argument, the intervals of $B_5,B_n$ are disjoint too. By lemma \ref{disjoint}, we have $d(B_1^1,B_9^0)< 3K/4$. These facts combined imply that the interval of $B_5$ has length less than $3K/4$,  contradicting \eqref{length}. 
\end{proof}

Our plan is to construct the two paths $\alpha_1,\alpha_2$ joining $A$ to $Z$ required by \Tr{Menger} by combining subpaths of $\gamma$ with bridges from a sequence $S'$ as in \Lr{initials}. However, we can have bridges $B,C$ in $S'$ arbitrarily close to each other (but not intersecting, as this would contradict maximality), in which case we have to avoid putting $B$ in $\alpha_1$ and $C$ in $\alpha_2$. Therefore, if $d(B,C)<a$, we will use a \pth{B}{C} $\alpha$ of length less than $a$ to join $B,C$, and produce a new path called a \defi{\mb}. We may need to iterate the process, joining two nearby \mb s into a new \mb. But the following lemma will exploit \Lr{U iii} to upper bound the complexity of \mb s. 

Formally, we can simply define a \defi{\mb} $B$ to be a path
with endpoints in $\gamma \cup A \cup Z$ whose interior is disjoint from $\gamma \cup A \cup Z$. But the \mb s we will actually work with will obey further restrictions, being constructed recursively starting from bridges as explained above. Much of the notation we introduced for bridges can be applied verbatim to \mb s, and we will do so without further comment.

A \defi{join} of a \mb\ $D$ is a maximal subpath of $D$ consisting of points that lie on no bridge of $S'$. 

Choose $a=  K/(16\cdot 17)$. 
\begin{lemma} \label{meta} 
There is a perfect, $K/8$-crossing, sequence $M$ of \mb s \st\
\begin{enumerate}
\item \label{m ii} each join $J$ of an element of $M$ has length $\ell(J) < 15a$; 
\item \label{m iii} we have $d(J, \gamma)\geq a$ for each join $J$ of an element of $M$, and
\item \label{m i} if $x,y$ are points of distinct \mb s $B,C$ of $M$, then $d(x,y)\geq a$ unless there are endpoints $B^i,C^{1-i}, i\in \{0,1\}$ with $d(B^i,C^{1-i})<K/8$. 
\end{enumerate}
\end{lemma}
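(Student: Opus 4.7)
The plan is to obtain $M$ by starting from the sequence $S'$ provided by Lemma~\ref{initials} (a perfect, $K/8$-crossing sequence of bridges, each of which is treated as a trivial meta-bridge) and iteratively merging close meta-bridges. At each stage, if there exist two meta-bridges $B < C$ in the current sequence together with witnesses $x\in B$, $y\in C$ satisfying $d(x,y)<a$ and failing the endpoint exception of condition~(iii), then I merge $B$, $C$ and every meta-bridge between them into a single new meta-bridge $D$, as follows. Choosing a geodesic $\alpha$ of length $<a$ from $x$ to $y$, I set
\[
D := B[B^0,x]\cup\alpha\cup C[y,C^1],
\]
where $B[B^0,x]$ is the subpath of $B$ from its initial endpoint $B^0$ to $x$, and $C[y,C^1]$ is defined analogously; note that $D^0=B^0$ and $D^1=C^1$. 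Each merge strictly reduces the number of meta-bridges, so the procedure terminates in some sequence $M$.

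Four of the required properties should follow without much work. The $K/8$-crossing property is preserved at every step because $D^0=B^0$ and $D^1=C^1$, so $D$ still $K/8$-surrounds the final point of its predecessor and is $K/8$-surrounded by its successor; the intervals of intermediate meta-bridges we have absorbed are swallowed by $[D^0,D^1]$. Perfectness is preserved because the collection of initial endpoints only loses members. Condition~(iii) is immediate from the stopping rule. For condition~(ii), I select the witnesses $x,y$ to lie on the \emph{spines} of original bridges of $S'$; as spines are at distance $\geq K/8$ from $\gamma$ by the definition of bridges, and $|\alpha|<a\ll K/8$, every point of $\alpha$ lies within $a/2$ of either $x$ or $y$, hence stays at distance $\geq K/8-a/2 > a$ from $\gamma$.

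The main obstacle is condition~(i), which demands that each join in $M$ have length $<15a$. A given merge contributes a single short geodesic $\alpha$ of length $<a$; the difficulty is that if the witness $x$ happens to lie in the interior of a join $J$ that $B$ inherited from an earlier merge, then truncating $B$ at $x$ glues the surviving portion of $J$ to $\alpha$, producing a longer single join in $D$. To avoid runaway concatenation, I will argue that one can always shift witnesses onto spines of bridges of $S'$: when the only $a$-close pair lies in joins, the inductive bound $\ell(J)<15a$ allows one to slide each witness to the nearest bridge endpoint of its join at cost $<15a/2$, and Lemma~\ref{U iii}, together with the choice $a=K/(16\cdot 17)$, ensures that the resulting bridge-to-bridge pair is still close enough (at distance strictly less than $16a$) for a geodesic between them to serve as a fresh, self-contained new join. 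The specific constants in the lemma are calibrated precisely to make this accounting close: the effective merge threshold on bridges ends up just under $16a$, each new join is bounded by $15a$, and the leftover gap of $16a-15a=a$ between distinct meta-bridges reproduces the value in~(iii).
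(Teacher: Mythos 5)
Your overall scheme---start from the perfect $K/8$-crossing sequence $S'$ of \Lr{initials} and repeatedly merge two \mb s $B<C$ through a short geodesic $\alpha$ whenever a pair of points violates condition \ref{m i}---is the same as the paper's, and your observations that perfectness and the $K/8$-crossing property survive each merge, and that \ref{m i} holds at termination, are correct. The gaps are in \ref{m ii} and \ref{m iii}, which is exactly where the real work lies. For \ref{m ii}, you cannot simply ``select the witnesses $x,y$ to lie on the spines'': the pair violating \ref{m i} may involve points on legs or on previously created joins, and your patch does not close. By your own accounting, sliding a witness from the interior of a join of length $<15a$ to a bridge of $S'$ costs up to $15a/2$ per side, so the new geodesic between bridge points has length $<16a$, which already breaks the inductive invariant $\ell(J)<15a$; iterating gives $17a$, $18a$, \dots, i.e.\ unbounded joins. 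Moreover \Lr{U iii} does not bound the \emph{distance} between the two bridge points (it bounds how many bridges of $\cu$ a short connection can span), so it cannot rescue the $<15a$ claim. The paper's resolution is different in kind: it lets joins concatenate freely into \emph{join-trees}, proves $\ell(T)<a(\rank(T)-1)$ by induction on the number of leaves, and then uses \Lr{U iii} (via the pigeonhole extraction of a subtree of rank between $9$ and $16$, whose total length would be $<16a\le K/4$ yet would connect bridges $B_i,B_j$ with $j\ge i+8$) to cap the rank at $16$, hence $\ell(T)<15a$. That rank argument is the missing idea.

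For \ref{m iii}, your estimate $d(\alpha,\gamma)\ge K/8-a/2$ is valid only when both witnesses lie on spines; a witness on a leg can be arbitrarily close to $\gamma$, so the claim as stated is false for the merges you are forced to perform. The paper handles this by showing only that \emph{some leaf} $p$ of each join-tree satisfies $d(p,\gamma)\ge K/16-a$ (using that the two legs' feet on $\gamma$ are bridge endpoints, hence at distance $\ge K/8$ by the merge criterion, together with the triangle inequality), and then propagates this bound across the whole join-tree using the already-established length bound $15a$; this is where the constant $a=K/(16\cdot 17)$ is actually consumed. You should replace your spine assumption and sliding argument with the join-tree construction, its rank/length estimates, and the leaf-distance estimate.
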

\begin{proof}
We start with a sequence $S'$ of maximal bridges as in \Lr{initials}, and construct $M$ recursively by joining (meta-)bridges together as long as \ref{m i} is not satisfied. Since $S'$ is finite, the process will terminate and \ref{m i} will then be satisfied. The fact that \ref{m ii} and \ref{m iii} hold will then be deduced from \Lr{U iii}.
\smallskip

To make this precise, set $M_0:= S'$, and proceed inductively as follows. At step $i=1,2,\ldots$, we have defined a sequence $M_{i-1}$ of \mb s. If $M_{i-1}$ satisfies \ref{m i}, we set $M:= M_{i-1}$ and stop. Otherwise we can find points $x,y$ in distinct elements $B, C$ of $M_{i-1}$, \st\ $d(x,y)<a$ and $d(B^i,C^j)\geq K/8$ \fe\ $i,j\in \{0,1\}$. We pick a shortest \pth{x}{y}\ $\alpha$, and combine it with a subpath of each of $B,C$ to form a \mb\ $D$ as follows. We notice that $\alpha$ separates $B\cup C$ into 4 subpaths, and choose two of them $B_\alpha,C_\alpha$ so as to maximize the interval of the \mb\ $D:= B_\alpha \cup \alpha \cup C_\alpha$. Notice that $D$ now $K/8$-surrounds any point of $\gamma$ previously $K/8$-surrounded by either $B$ or $C$. Thus the sequence $M_i$ obtained by replacing the subsequence of $M_{i-1}$ between $B$ and $C$ by $D$ is a perfect $K/8$-crossing sequence. For later use, we colour the path $\alpha$ \defi{brown}.

\medskip
The final sequence $M$ of this process satisfies  \ref{m i} by definition. To prove that it satisfies  \ref{m ii}, we will introduce some terminology. 

A \defi{join-tree} is any maximal (with respect to inclusion) connected subspace of $X$ all points of which had been coloured brown in some step $i$ of the above process. It is easy to see, by induction on $i$, that every join-tree is indeed homeomorphic to a finite simplicial tree, and that every join of an element of $M$ is contained in some join-tree. The \defi{length $\ell(T)$} of a join-tree $T$ is its 1-dimensional Hausdorff measure, and it equals the sum of the lengths of its constituent paths. We will prove \ref{m ii} by showing that $\ell(T)< 15a$ holds \fe\ join-tree $T$.

It is easy to see ---by induction on $i$--- that every leaf  of a join-tree $T$ lies on a bridge of $S'$. The \defi{rank $\rank(T)$} of $T$ is the number of leaves of $T$. Note that every two leaves of $T$ lie in distinct elements of $S'$, because when a new brown path is introduced it joins distinct \mb s. (Such a leaf $P$ is a (trivial) join-tree itself, and we have $\rank(P)=1$.)
 
We claim that  \fe\ join-tree $T$ with $\rank(T)\geq 2$, we have
\labtequ{tree length}{$\ell(T)< a (\rank(T)-1)$.} 
We prove this by induction on $\rank(T)$. Let $\alpha$ be an edge of $T$, and recall that $\ell(\alpha)< a$. The case $\rank(T)=2$ occurs exactly when $T=\alpha$, and so our claim is corroborated. For the general case, notice that $T \sm \alpha$ has two components $T_1,T_2$. We have $\rank(T)= \rank(T_1)+  \rank(T_2)$, and $\ell(T)< \ell(T_1) + \ell(\alpha) + \ell(T_2)$ and so our inductive hypothesis yields  $\ell(T)< a (\rank(T_1)+  \rank(T_2)-2)+a = a (\rank(T)-1)$ as claimed.

Next we claim that 
\labtequ{tree rank}{there is no join-tree $T$ with $\rank(T)\geq 17$.}
Indeed, if such a $T$ exists, then by recursively removing its last edge coloured brown, we can obtain a join-subtree $T'$ with $16\geq \rank(T')\geq 9$ by the pigeonhole principle. Let $B_i,B_j$ be bridges of $S'$ incident with $T'$, \st\ $j\geq i+8$, which exist since $\rank(T')\geq 9$. By \eqref{tree length} we have $\ell(T')< 16a$, and so $d(B_i,B_j)< 16a$. This contradicts \Lr{U iii} because $16a\leq K/4$, and so \eqref{tree rank} is proved. \mymargin{need $a\leq K/32$} Combining \eqref{tree rank} with \eqref{tree length} yields \ref{m ii}.

\medskip
To prove \ref{m iii}, we will check that 
\labtequ{leaf}{every join-tree $T$ has a leaf $p$ \st\ $d(p,\gamma)\geq K/16-a$.}
For this, let $\alpha$ be the first subpath of $T$ coloured brown, let $x,y$ be its two endpoints, and let $B_x,B_y$ be the bridges in $S'$ containing them. Thus both $x,y$ are leaves of $T$. Since the spines of $B_x,B_y$ are at distance at least $K/8$ from $\gamma$, and $\ell(\alpha)<a$, the only interesting case is where both $x,y$ lie on legs. Let $x',y'$ be the endpoints of those legs on $\gamma$, and recall that we must have $d(x',y')\geq K/8$ for otherwise we would not have coloured $\alpha$ brown. Thus the triangle inequality yields $d(x,\gamma) + d(y,\gamma) + a \geq K/8$. Moreover, we have $d(y,\gamma) \leq d(x,\gamma) + a$. Combining these inequalities we obtain $2d(x,\gamma) + 2a \geq K/8$, and letting $p:=x$ proves \eqref{leaf}.

Combining  \ref{m ii} and \eqref{leaf} we deduce that every join-tree lies at distance at least  $K/16-a-15a=K/16-16a$ from $\gamma$. Since $17a\leq K/16$, and every join is contained in a join-tree, this implies \ref{m iii}. \mymargin{need $a\leq K/16\cdot 17$}
\end{proof}

\noindent {\bf Remark:} The last inequality is the bottleneck for maximizing the distance $a$ between the two paths $\alpha_i$ of \Tr{Menger}.

\bigskip

\comment{
\begin{proposition} \label{}  
Let $x,y$ be points of meta-bridges $X,Y$ \st\ $d(x,\gamma), d(y,\gamma)\leq r$. Then $d(x,y)\geq a$ unless there are endpoints $x',y'$ of $X,Y$ \st\ $d(x',y')\leq ...$
\end{proposition}
\begin{proof}
\end{proof}
}

We have collected all the necessary ingredients for the proof of our \Tr{Menger}.

\begin{proof}[Proof of \Tr{Menger}]
Assuming condition \ref{S} fails, we will now define the two paths $\alpha _1, \alpha _2$ of \ref{paths}, joining $A$ to $Z$, and satisfying $d(\alpha _1,\alpha _2)\geq a$. \smallskip

Each $\alpha_j$ will consist of intervals of $\gamma$ and \mb s in a family $M$ as in \Lr{meta}. It will be convenient to colour the paths contained
in $\alpha _1$ red and the paths of $\alpha _2$ green. We will define these paths inductively. 

Since $M$ is a perfect sequence, it starts with a \mb\ $M_1$ surrounding $t_0=\gamma (0)$. We colour $M_1$ red, and set $\beta_1^1:= M_1$. We colour the initial subpath of $\gamma$ from $t_0$ to $t_1:= M_1^1$ green, and  set $\beta _2^1:= [t_0,t_1]$. This completes step 1 of our induction. 
 
For step $n$, assume that we have already defined a red path $\beta_1=\beta^{n-1}_1$ and a green path $\beta_2=\beta^{n-1}_2$ with a common endpoint $t= t_{n-1}\in \gamma$. Let $C\in M$ be a \mb\ that $K/8$-surrounds $t$. Let $s$ be a point of $\gamma \cap (\beta_1 \cup \beta_2)$ closest to $C^0$ (it is possible that $s=C^0$). Suppose $s\in \beta_i$. Remove the subpath of $\beta_i$ from $s$ to $t$, and replace it by $C \cup I$ where $I$ is the subpath of $\gamma$ between $s$ and $C^0$\mymargin{ (\fig{fig betas})}. Finally, extend the other path $\beta_{3-i}$ by the subpath of $\gamma$ from $t$ to $C^1$, if $C^1\in \gamma$, or from $t$ to $\gamma (1)$ if $C^1\in Z$.
In the latter case we terminate the process and set $\alpha_j:= \beta_j, j\in \{1,2\}$. This completes the construction of  $\alpha _1, \alpha _2$.

\medskip
It remains to check that $d(\alpha _1,\alpha _2)\geq a$, and to do so we first point out some properties of our construction.

Firstly, every \mb\ $C\in M$ contained in an $\alpha _1$ is traversed from $C^0$ to $C^1$ as $\alpha _1$ moves from $A$ to $Z$. 

Secondly, any \mb\ $B$ removed from $\beta_1 \cup \beta_2$ in some step $n$ cannot return to $\beta_1 \cup \beta_2$ in a later step $n'>n$, because $B^1\leq t_{n-1}$.

Finally, notice that if $d(C^0,B^1) < K/8$ holds for some $\mb s$ $B,C\in M$,  then either $s= B^j$, in which case $B$ and $C$ become part of the same $\beta_i$,  
or $s\neq B^j$ and $B$ is not part of $\beta_1 \cup \beta_2$ at step $n$, hence nor at any later step. To summarize, we have proved that 
\labtequ{BC}{if $d(C^i,B^{i'}) < K/8$ holds for some \mb s $B,C\in M$, then $B,C$ cannot lie in distinct $\alpha_j$.}
Define a \defi{\game} to be a shortest interval of $\gamma$ whose endpoints are both endpoints of \mb s of $M$. Thus \eqref{BC} implies
\labtequ{game}{if two \game s $e,f$ are at distance $< K/8$, then $e,f$ cannot lie in distinct $\alpha_j$.}
%

We now check that $d(x,y)\geq a$ holds \fe\ $x\in \alpha_1, y\in \alpha _2$, distinguishing three cases according to whether these points lie on \mb s or on $\gamma$.

If $x,y$ lie on  \mb s $B,C$, then $B\neq C$ by construction. Then \eqref{BC} says that the endpoints of $B,C$ are at distance at least $K/8$ from each other, and so property \ref{m i} of \Lr{meta} yields $d(x,y)\geq a$.

If $x,y\in \gamma$, then \eqref{game} implies again $d(x,y)\geq a$.

Finally, if  $x \in B \in M$, and $y\in \gamma$, then property \ref{m iii} of \Lr{meta} yields $d(x,y)\geq a$ unless $x$ lies on a leg of $B$. Here, a \defi{leg} of a \mb\ is a maximal initial or final subpath contained in a bridge in $S'$. If $x$ lies on a leg $c$ of $B$, we recall that if  $c$ lies in $\alpha_j$, then one of the two \game s $e$ incident with its endpoint $x'\in \gamma$ also lies in $\alpha_j$. Since $y$ cannot lie in $e$, \eqref{game} yields $d(y,x')\geq K/8$. We also know that $d(x,y)\geq d(x,x')$ by the definition of a leg, and so  $d(x,y)\geq a$ easily follows.

This completes the proof of \Tr{Menger}. 
\end{proof}



\subsection{Alternative construction of the paths --- reducing to the classical Menger theorem}

This section offers an alternative proof of \Tr{Menger}, which is simpler but gives a weaker lower bound $a$ on $d(\alpha_1,\alpha_2)$. The main difference is that instead of constructing $\alpha_1,\alpha_2$ explicitly, we apply Menger's theorem to an auxiliary graph, consisting of \game s and \mb s in $M$.

For this, we first adapt \ref{m i} of \Lr{meta} by lowering the constant $K/8$ to $K/20$, say, and lowering $a$ proportionately; the proof can be repeated verbatim after this change. This change allows us to deduce the following corollary of \Lr{meta}: 
\begin{corollary} \label{cor meta}
If $B,C$ are \mb s in $M$ or \game s, then $d(B,C)\geq a$ unless $B,C$ are incident or as in \ref{m i} of \Lr{meta} (i.e.\ they have $K/20$-close end-points).
\end{corollary}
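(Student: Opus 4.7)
The plan is a short case analysis on whether each of $B$ and $C$ is a meta-bridge in $M$ or a $\gamma$-edge. When both are meta-bridges, the statement is precisely property~\ref{m i} of the adapted \Lr{meta}. When both are $\gamma$-edges, I use that $\gamma$ is a geodesic, so $d(B,C)$ equals the $\gamma$-distance between them: if $B$ and $C$ are non-incident they bound a gap on $\gamma$ whose endpoints are endpoints of $B$ and $C$, and if this gap has length less than $a$ then, arranging $a \le K/20$, those endpoints are $K/20$-close.

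The mixed case, with $B\in M$ and $C$ a $\gamma$-edge sharing no endpoint and no $K/20$-close endpoint with $B$, is the crux. I decompose $B$ along its recursive construction into four kinds of pieces: spines of constituent bridges of $S'$, joins, partial legs of constituent bridges (legs cut off by a brown path before reaching $\gamma$), and the two endpoint-legs at $B^0$ and $B^1$. Spines lie at distance at least $K/8$ from $\gamma$ by \Dr{bridge}; joins lie at distance at least $a$ from $\gamma$ by property~\ref{m iii} of \Lr{meta}; and each partial leg has its lowest point at a brown leaf $x$ of a join-tree, which inherits $d(x,\gamma)\ge a$ from the proof of~\ref{m iii} (every point of a join-tree, including its leaves, satisfies this bound), so the whole partial leg lies at distance at least $a$ from $\gamma$. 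Since $C\subseteq\gamma$, each of these three kinds of pieces is already at distance at least $a$ from $C$.

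The endpoint-leg subcase is the key computation. The endpoint-leg at $B^i$ is a length-$K/8$ geodesic from $B^i\in\gamma$ up to a spine, so one checks that $d(p,\gamma)=d(p,B^i)=:\delta\in[0,K/8]$ for any point $p$ on it. Because every meta-bridge endpoint is a $\gamma$-edge endpoint, $B^i$ is not an interior point of $C$; thus the closest point of $C$ to $B^i$ is an endpoint $C^j$, and by hypothesis $d(B^i,C^j)\ge K/20$. Combining the trivial bound $d(p,C)\ge d(p,\gamma)=\delta$ with the triangle inequality $d(p,C)\ge d(B^i,C)-d(p,B^i)\ge K/20-\delta$ yields $d(p,C)\ge\max(\delta,K/20-\delta)\ge K/40$. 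It thus suffices to take $a$ to be a small enough universal multiple of $K$, concretely the minimum of $K/40$, $K/20$, and the $a$ already produced by the adapted \Lr{meta}. This endpoint-leg calculation is the main obstacle: balancing the leg height $\delta$ against the gap $d(B^i,C^j)$ is precisely what forces the constant $K/20$ in \ref{m i} (in place of the original $K/8$).
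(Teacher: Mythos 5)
Your proof is correct and is essentially an expanded version of the paper's own (very terse) argument, which uses the same three-way case split and, in the mixed case, the same three ingredients: property \ref{m iii} of \Lr{meta} for the joins, the $K/8$ lower bound for spines, and the fact that legs are geodesics to $\gamma$ (your explicit treatment of partial legs and the endpoint-leg triangle-inequality computation are exactly the details the paper leaves implicit). One small correction to your closing remark: the endpoint-leg balance is not what forces the constant $K/20$ --- with the original threshold $K/8$ your computation would give the even better bound $K/16$ --- rather, $K/20$ is needed later, in the contraction step of the alternative proof of \Tr{Menger}.
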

\begin{proof}
If $B,C\in M$, then this is just \Lr{meta} \ref{m i}. If $B,C$ are both \game s, then it is obvious because $\gamma$ is a geodesic. Otherwise, we have $B\in M, C\subseteq \gamma$ or vice-versa. In this case we use \Lr{meta} \ref{m iii}, that fact that spines of \mb s are far from $\gamma$, and that legs of \mb s are geodetics to $\gamma$.
\end{proof}

\begin{proof}[Alternative proof of \Tr{Menger}]
Let \g be the graph whose edge set consists of $M$ (as in \Lr{meta}) and the \game s that $M$ defines. \Fe\ $x,y\in V(G)$ joined by a \game\ $e$ of length less than $K/20$ (the new constant of \ref{m i} of \Lr{meta}), contract $e$. Let $G'$ be the graph obtained from \g after all these contractions. Notice that the contracted edges of \g are disjoint, and therefore  $G'$ has no cut-vertex $x$ since $M$ contains a \mb\ surrounding $x$. Let $\alpha_1,\alpha_2$ be two disjoint \pths{A}{Z}, which exist by Menger's theorem. Then $d(\alpha_1,\alpha_2)>a$ by \Cr{cor meta}.
\end{proof}

\subsection{Choosing endpoints}
We remark that $\gamma(0),\gamma(1)$ are among the four endpoints of $\alpha_1, y\in \alpha _2$ in the above construction. This can be seen directly from the construction, or by noticing that $\cu$, and hence $S'$ and $M$, contains at most one bridge with an endpoint in each of $A$ and $Z$ by maximality. Recall that $\gamma$ was chosen as an arbitrary shortest \pth{A}{Z}. By modifying $X$ we can strengthen the above conclusion:

\begin{corollary} \label{endpoints}
For every $x\in A$ and $z\in Z$, we can choose the paths of \Tr{Menger} so that both $x,z$ are among their four endpoints.
\end{corollary}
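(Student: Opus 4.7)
The plan is to modify $X$ into a length space $X'$ in which every shortest path between the new source/sink sets is forced to run from $x$ to $z$, then invoke Theorem~\ref{Menger} on $X'$ and restrict the resulting paths back to $X$.

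Let $N:=d_X(x,z)+K+1$ and build $X'$ from $X$ by attaching, at each $y\in A\setminus\{x\}$, a pendant arc of length $N$ ending at a new point $\tilde y$, and similarly at each $w\in Z\setminus\{z\}$ a pendant arc of length $N$ ending at $\tilde w$. Set
\[
A':=\{x\}\cup\{\tilde y:y\in A\setminus\{x\}\},\qquad Z':=\{z\}\cup\{\tilde w:w\in Z\setminus\{z\}\}.
\]
Shortest paths in $X'$ between two points of $X$ never profitably enter the pendant dead-ends, so $X\hookrightarrow X'$ is isometric; moreover any $A'$--$Z'$ path through a pendant has length at least $N>d_X(x,z)$, so every shortest $A'$--$Z'$ path in $X'$ is an $x$--$z$ geodesic lying inside $X$.

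The first step is to verify that a diameter-$K$ separator of $A'$ from $Z'$ in $X'$ would yield a comparable separator of $A$ from $Z$ in $X$. Since $K<N$, any such $S'$ touches at most one pendant nontrivially; a short case analysis ($S'\subseteq X$; $S'$ entirely inside one pendant far from its base; or $S'$ straddling $X$ and an adjacent pendant segment) either exhibits an $A'$--$Z'$ path in $X'\setminus S'$ or, after adding at most the attachment point of the touched pendant to $S'\cap X$, gives a diameter-$K$ separator of $A$ from $Z$ in $X$, contradicting the hypothesis that alternative \ref{S} of Theorem~\ref{Menger} fails for $(X,A,Z)$.

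Having ruled out \ref{S} for $(X',A',Z')$, Theorem~\ref{Menger} supplies paths $\alpha_1',\alpha_2'$ from $A'$ to $Z'$ at mutual distance $\geq K/272$ in $X'$. Taking the theorem's choice of $\gamma$ to be an $x$--$z$ geodesic in $X$, the Remark preceding this corollary places $x=\gamma(0)$ and $z=\gamma(1)$ among their four endpoints. Assuming each $\alpha_i'$ simple, it contains at most an initial and a final pendant segment; truncating at the first and last points of $\alpha_i'$ inside $X$ produces paths $\alpha_i\subset X$ joining $A$ to $Z$ with $x,z$ among the endpoints, and isometricity together with $\alpha_i\subseteq\alpha_i'$ yields $d_X(\alpha_1,\alpha_2)\geq K/272$.

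The main obstacle is the separator-translation step: specifically the mixed case, where one has to argue that augmenting $S'\cap X$ by a single attachment point still blocks every $A$--$Z$ path in $X$ while keeping the diameter under control (perhaps after slightly shrinking $K$ to absorb the triangle-inequality loss, which would only cost a constant factor in the final distance $a$).
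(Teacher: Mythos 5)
Your construction is essentially the one in the paper: attach long pendant arcs at the points of $A\sm\{x\}$ so that every shortest $A'$--$Z'$ path is an $x$--$z$ geodesic inside $X$, verify that no diameter-$K$ set separates $A'$ from $Z'$ in $X'$, apply \Tr{Menger} with $\gamma$ chosen to be that geodesic, and truncate the pendant segments at the end. (The paper in fact only modifies the $A$-side, taking $Z'=Z$; your symmetric treatment of $Z\sm\{z\}$ is the more careful choice, since without it $d(A',Z')=d(x,Z)$ need not equal $d(x,z)$ and the shortest $A'$--$Z'$ path need not end at $z$.)

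The step you flag as the remaining obstacle --- translating a diameter-$K$ separator $S$ of $(X',A',Z')$ into one of $(X,A,Z)$ --- is closed in the paper by a cleaner device than your case analysis: replace $S\sm X$ by the set $S_A$ of attachment points $p$ whose pendant arc meets $S$, and put $S':=S_A\cup(S\cap X)$. Every path from a point $q$ of the pendant at $p$ to any point off that pendant passes through $p$, so this projection is distance-nonincreasing and $\diam(S')\le\diam(S)\le K$ with no loss whatsoever; in particular there is no need to shrink $K$ or sacrifice a constant factor in $a$. And $S'$ separates $A$ from $Z$ in $X$: an $A$--$Z$ path $P$ in $X\sm S'$ starting at $p\in A$ would, prefixed by the (untouched, since $p\notin S_A$) pendant at $p$ when $p\neq x$, give an $A'$--$Z'$ path in $X'\sm S$. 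Note also that your sub-claim that $S'$ ``touches at most one pendant nontrivially'' is false as stated: two points of $A$ at distance less than $K$ in $X$ carry pendants whose initial segments a single diameter-$K$ set can meet simultaneously. This does not matter once every pendant point is projected to its attachment point as above, but your case analysis as described would not go through without that repair.
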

\begin{proof}
By the above remark, the statement holds if $d(x,z)= d(A,Z)$. We will extend $X$ into a geodesic metric space $X'$ and define $A'\ni x, Z' \ni z$ so that $d(x,z)= d(A',Z')$ and $X'$ has no subset $S$ with $\diam(S)\leq K$ separating $A'$ from $Z'$.             

To do so, \fe\ point $p \in A \sm \{x\}$, attach a geodetic arc $A_p$ of length  larger than $d(x,z)$ to $p$, and let $p'$ denote the other endpoint of $A_p$. Let $A':= \{x\} \cup \{p' \mid p \in A, p\neq x\}$. Let $Z':= Z$. There is a natural way to extend the metric of $X$ to $X'$, and this is also the unique metric $d$ that makes $X'$ a geodesic metric space. Note that $d(x,z)= d(A',Z')$ holds.

Let $S$ be a subset of $X'$ separating $A'$ from $Z'$ with $\diam(S)\leq K$. We modify $S$ into a subset $S'$ of $X$ by `projecting' $S \sm X$ onto $A$. To make this precise, let $S_A \subset A$ be the set of those $p\in A$ \st\  $S$ intersects $A_p$, and let $S':= S_A \cup (S\cap X)$. It \istc\ that $\diam(S')\leq  \diam(S)$, and that $S'$ separates $A$ from $Z$ in $X$. It follows that no such $S \subset X'$ can exist by our assumption on $X$. Applying \Tr{Menger}, and the above remark, to $X',A',Z'$ we obtain two \pths{A'}{Z'}\ that are $a$-far apart and contain $x,z$ among their four endpoints. Removing a subpath of the form $A_p$ from one of them completes our proof.
\end{proof}

\subsection{An application --- diverging rays} \label{sec div rays}

It is not hard to extend \Tr{Menger} to the situation where $Z$ is replaced by $\infty$; that is, we have

\begin{corollary} \label{cor inf Menger}
Let $X$ be a \lf\ metric graph, 
 and $A \subset X$. For every $K>0$, there is either
\begin{enumerate}
\item \label{S} a set $S \subset X$ with $\diam(S)\leq K$ such that $X \setminus S$ contains no path of infinite length starting at $A$, or 
\item \label{paths} two paths of infinite length starting at $A$ at distance at least $a=K/16\cdot 17$ from each other.
\end{enumerate}
\end{corollary}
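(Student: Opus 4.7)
The plan is to reduce to \Tr{Menger} via a sequence of finite-radius instances and extract two infinite rays by a König-style compactness argument. Assume $A \neq \emptyset$ (else the claim is vacuous: take $S = \emptyset$) and fix a base vertex $a_0 \in A$. For each $n \in \mathbb{N}$ set $Z_n := \{y \in X : d(y, a_0) \geq n\}$, and apply \Tr{Menger} to the pair $(A, Z_n)$ with constant $K$.

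If the separator conclusion \ref{S} of \Tr{Menger} holds for some $n$, producing $S_n \subset X$ with $\diam(S_n) \leq K$ separating $A$ from $Z_n$, then I claim $S_n$ serves as the required separator of \ref{S} of the corollary. Indeed, any infinite-length simple path $P$ starting in $A$ visits infinitely many vertices; since $X$ is locally finite, every ball $B_r(a_0)$ is finite, so $P$ must leave $B_{n-1}(a_0)$ and hence meet $Z_n$. The initial segment of $P$ up to its first point in $Z_n$ is an $A$-to-$Z_n$ path, which must cross $S_n$. Therefore no infinite-length path starting in $A$ is contained in $X \setminus S_n$.

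Otherwise, for every $n$ \Tr{Menger} yields paths $P_n^1, P_n^2$ joining $A$ to $Z_n$ with $d(P_n^1, P_n^2) \geq a$. By \Cr{endpoints} I may arrange that $a_0$ is among the four endpoints of the pair; after passing to a subsequence I may assume $a_0$ is always the starting vertex of $P_n^1$. Reducing to the case that $A$ is bounded (the unbounded case is handled by replacing $A$ with a bounded truncation, since a ray starting in a truncation still starts in $A$), local finiteness makes $A \cap B_R(a_0)$ finite for each $R$, so a further subsequence forces the starting vertex $b \in A$ of $P_n^2$ to be constant. Since $|P_n^i| \to \infty$ and the initial vertex-sequences of the $P_n^i$ of any fixed length $r$ form a finite set (again by local finiteness), König's Infinity Lemma applied to the tree of these initial segments extracts rays $R^1, R^2$ starting at $a_0$ and $b$ respectively, each a subsequential limit of initial segments of its $P_n^i$.

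The lower bound $d(R^1, R^2) \geq a$ then passes to the limit: for any $x \in R^1$ and $y \in R^2$, a common cofinal set of indices (obtained by interleaving the two extractions diagonally) provides $n$ with $x \in P_n^1$ and $y \in P_n^2$, whence $d(x,y) \geq d(P_n^1, P_n^2) \geq a$. The step I expect to be most delicate is coordinating the two König extractions so that this distance bound survives simultaneously for both rays; a standard diagonal argument on the product of the two trees of initial segments does the job. The remaining technicalities, namely controlling starting endpoints via \Cr{endpoints} and handling unbounded $A$ by truncation, are routine.
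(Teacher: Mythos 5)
Your approach is exactly the paper's: the paper proves this corollary only by a two-line sketch (apply \Tr{Menger} to a sequence of finite-radius instances with $Z_n$ a sphere around $A$, then ``use a compactness argument''), and your write-up supplies the missing details. The core of the argument is fine: in the separator case a single $S_n$ works, and in the other case the K\"onig-style extraction on the \emph{product} of the two trees of initial segments (so that both paths stabilise along a common subsequence of indices) is the correct way to make the distance bound $d(R^1,R^2)\ge a$ survive the limit; that is indeed the only delicate point and you handle it correctly.

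One step of your proposal does not work as claimed: the reduction of unbounded $A$ to bounded $A$ by truncation. If you replace $A$ by $A':=A\cap B_R(a_0)$ and \Tr{Menger} returns a separator $S$ for $A'$, you have only shown that $X\setminus S$ contains no infinite path starting in $A'$; an infinite path could still start at a point of $A\setminus A'$ and avoid $S$ entirely, so you cannot conclude alternative \ref{S} for the original $A$. In fact no such reduction can exist, because the statement genuinely requires $A$ to be bounded: take $X$ a ray and $A=V(X)$. Then no set of bounded diameter can separate $A$ from infinity (every tail of the ray is an infinite path starting in $A$), yet any two infinite paths in a ray share a tail, so neither alternative holds. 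The corollary should therefore be read with $A$ bounded (as it is in its only application, via \Lr{lem balls}, where $A$ is finite), and your proof is complete in that setting; the truncation remark should simply be deleted rather than presented as covering an extra case. A second, much smaller caveat: your assertion that every ball $B_r(a_0)$ is finite uses more than local finiteness of vertex degrees when edge lengths are allowed to shrink, but it holds in the setting the corollary is actually used in.
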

\begin{proof}
Apply \Tr{Menger} to a sequence of spaces $\seq{X}$, namely the balls of radius $n \in \N$ around $A$ in $X$, with $Z=Z_n$ being the set of points at distance exactly $n$ from $A$. This yields pairs of \pths{A}{Z_n} at distance at least $a$ from each other, and we can use a compactness argument to obtain \pths{A}{\infty}  at least $a$ from each other.
\end{proof}

\noindent {\bf Remark:} the local-finiteness condition is necessary in \Cr{cor inf Menger}. \mymargin{added this remark} A non-locally-finite counterexample can be obtained as follows. Let $R=x_0x_1\ldots$ be  a ray, 
and for each $i>1$, join $x_0$ to $x_i$ by a path $P_i$ of length $i$, so that $P_i\cap P_j = \{x_0\}$ \fe\ $i\neq j$, to obtain a graph $X$ (in which $x_0$ is the only vertex of infinite degree, see \fig{figNLF}). Let $A$ be any infinite set of vertices of $X$. 
Easily, no set of finite diameter separates $A$ from infinity in $X$, i.e.\ \ref{S} fails. Moreover, if $X$ contains two infinite paths, then so does $X':=X-x_0$. But $X'$ is an 1-ended tree. Thus \ref{paths} fails as well.

\begin{figure} 
\begin{center}
\begin{overpic}[width=.45\linewidth]{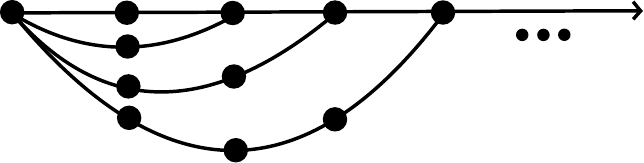} 
\put(-2,28){$x_0$}
\put(19,28){$x_1$}
\put(60,5){$P_4$}

\end{overpic}
\end{center}
\caption{A non-locally-finite graph failing the conclusion of \Cr{cor inf Menger}.} \label{figNLF}
\end{figure}

\medskip
We say that two rays $R,L$ in a graph \g  \defi{diverge}, if
\fe\ $n\in \N$ they have tails $R' \subseteq R, L' \subseteq L$ satisfying $d(R',L')>n$.

\begin{theorem} \label{thm div rays}
Let \g be a graph with bounded degrees which has an infinite set of pairwise disjoint rays. Then \g has a pair of diverging rays. 
\end{theorem}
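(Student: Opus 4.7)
The plan is to apply Corollary \ref{cor inf Menger} to produce pairs of rays at arbitrarily large distance, and then extract a single diverging pair by a compactness/end analysis that exploits the bounded-degree hypothesis. First, let $A$ be the (infinite) set of starting vertices of the given pairwise disjoint rays $R_1, R_2, \ldots$. For each $K \in \N$, I apply Corollary \ref{cor inf Menger} to $(G,A)$ with parameter $K$. Condition \ref{S} of the corollary fails: any set $S$ of diameter $\leq K$ is finite, because $G$ has bounded degree, and such a finite $S$ can destroy at most finitely many of the rays $R_i$, so the remaining rays are infinite paths in $G\setminus S$ starting at $A\setminus S$. Hence \ref{paths} gives, for every $K$, a pair of rays $(\rho_1^K, \rho_2^K)$ starting in $A$ with $d(\rho_1^K, \rho_2^K) \geq K/272$.

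Next I would split on the number of ends of $G$. If $G$ has at least two ends, pick rays $R,L$ lying in distinct ends $\omega_1 \neq \omega_2$ (these exist by König's theorem and the assumption on disjoint rays). I claim $R,L$ already diverge: given $n\in \N$, let $S$ be a finite set that separates $\omega_1$ from $\omega_2$, and set $T^{(n)} := \{v \in V(G) : d(v,S) \leq 2n\}$, which is finite by bounded degree. Take tails $R'\subset R$, $L'\subset L$ disjoint from $T^{(n)}$, which exist since both rays tend to their respective ends. Any path from $R'$ to $L'$ in $G$ must visit $S$ at some vertex $s$, and for every $u \in R'\cup L'$ we have $d(u,s)\geq d(u,S) > 2n$ by the definition of $T^{(n)}$. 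Hence $d(R',L') > 2n > n$, proving divergence.

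For the case where $G$ has exactly one end, I would use the one-ended structure to anchor a König-type compactness argument on the pairs $(\rho_1^K, \rho_2^K)$. Fix a base vertex $v_0 \in V(G)$, and let $C_n$ denote the (unique) infinite component of $G\setminus B(v_0,n)$. Every ray in $G$ eventually enters $C_n$ for each $n$, and for each $K$ large enough both $\rho_1^K$ and $\rho_2^K$ must cross every sphere $S_n = \partial B(v_0,n)$ at vertices that are $\geq K/272$ apart. Since each sphere $S_n$ is finite (bounded degree), a diagonal pigeonhole over $n$ and $K$ lets me extract subsequences along which the first crossings $r_n^K \in \rho_1^K \cap S_n$ and $\ell_n^K \in \rho_2^K \cap S_n$ stabilize to vertices $r_n, \ell_n$. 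Assembling these via König's infinity lemma yields two rays $R,L$ with $r_n \in R, \ell_n \in L$, and the inequality $d(r_n,\ell_n)\geq K_n/272$ for a subsequence $K_n\to\infty$ forces $d(R_{\geq n}, L_{\geq n}) \to \infty$, i.e., $R$ and $L$ diverge.

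The main obstacle is the compactness step in the one-ended case: the starting vertices $x^K,y^K \in A$ of the coarse-Menger pairs satisfy $d(x^K,y^K)\geq K/272$, so they wander off to infinity, and neither ray need pass through any bounded neighborhood of a fixed $v_0$. What rescues the argument is the 1-endedness, which forces both rays to eventually live in the unique infinite component $C_n$ of $G\setminus B(v_0,n)$; this provides the ``funneling'' needed to pigeon on finite spheres. The bounded-degree hypothesis is essential here, both for finiteness of $S_n$ and (via Corollary \ref{cor inf Menger}) to guarantee the Menger-type pairs; indeed the remark following that corollary shows the conclusion fails without local finiteness, and the stronger bounded-degree assumption is what makes the pigeonhole step go through uniformly in $n$.
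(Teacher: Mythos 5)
Your reduction to Corollary \ref{cor inf Menger} and your treatment of the multi-ended case are fine, but the one-ended case --- which is the heart of the theorem --- has a genuine gap, and it is exactly the obstacle you name without actually overcoming. For a fixed base vertex $v_0$ and a fixed $n$, the sphere $S_n$ lies inside $B(v_0,n)$, which has diameter at most $2n$. Once $K/272>2n$, two sets at mutual distance at least $K/272$ cannot both meet $B(v_0,n)$, so for each fixed $n$ at most one of $\rho_1^K,\rho_2^K$ has a crossing point on $S_n$ for all large $K$. Hence the crossing points $r_n^K,\ell_n^K$ you want to pigeonhole on simply do not exist cofinally in $K$, and the inequality $d(r_n^K,\ell_n^K)\ge K/272$ you invoke is vacuous (it would contradict $r_n^K,\ell_n^K\in S_n$). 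One-endedness does not rescue this: the fact that every ray eventually lives in the unique infinite component of $G\setminus B(v_0,n)$ says nothing about the rays passing near $v_0$, which is what a K\"onig-type extraction over the finite spheres $S_n$ requires. There is no fixed finite region through which all the pairs $(\rho_1^K,\rho_2^K)$ pass, so no compactness argument anchored at a fixed $v_0$ can assemble them into a single pair; and no individual pair can be used directly either, since two rays at distance $\ge K/272$ need not diverge (think of two parallel rays in the half-grid).

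What is missing is a mechanism that makes the required separation \emph{grow with the distance from the origin} within a \emph{single} application of the Menger-type statement, rather than applying it at ever larger but spatially uniform scales $K$. The paper achieves this with \Lr{lem balls}: one assigns edge-lengths $\ell(e)\to 0$ at infinity (length $1/n$ on the $n$-th annulus, with annuli chosen via the bounded-degree and disjoint-rays hypotheses so that no $d_\ell$-ball of radius $1$ separates a suitable finite $A$ from infinity), applies \Cr{cor inf Menger} once in the rescaled metric $d_\ell$ to get a single pair of rays with $d_\ell(R,L)>b$, and then observes that a path joining far-out tails of $R$ and $L$ must use many short edges, so its graph length tends to infinity. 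If you want to salvage your scheme you would need to reproduce some version of this rescaling (or otherwise produce one pair whose separation increases along the rays); the case split on the number of ends, while correct in the multi-ended branch, is not needed in the paper's argument and does not help where the difficulty actually lies.
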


The idea is to apply \Cr{cor inf Menger}, letting the scale $K$ increase as we progress towards infinity. Instead of increasing $K$, we will use the equivalent but more convenient method of scaling down the edge-lengths as the distance from a fixed origin tends to infinity. This is the role of the following lemma:

\begin{lemma} \label{lem balls}
Let $G=(V,E)$ be a graph with bounded degrees which has an infinite set of pairwise disjoint rays. Then \ti\ a finite vertex-set $A\subset V$ and an assignment of edge-lengths $\ell : E \to \R_{>0}$ with the following properties: 
\begin{enumerate}
\item \label{non sep} no ball of radius $1$ in the corresponding metric $d_\ell$ separates $A$ from $\infty$, and
\item \label{to zero} $\lim_{e\in E} \ell(e) = 0$.\footnote{This means that \fe\ $\epsilon>0$ all but finitely many $e\in E$ satisfy $\ell(e)<\epsilon$.}
\end{enumerate}
\end{lemma}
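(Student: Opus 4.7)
The plan is to construct the base set $A$ and the length function $\ell$ in concert so that the structure of $d_\ell$-balls of radius $1$ prevents them from becoming graph-cuts between $A$ and infinity, while still enforcing $\ell(e)\to 0$.

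\textbf{Construction.} Select a finite base set $A\subset V(G)$ (to be refined by the analysis; as a first approximation, take the first vertex $v_0$ of one of the disjoint rays, augmented if necessary). For an edge $e=uv$, write $\mathrm{lev}(e):=\min(d_G(u,A),d_G(v,A))$, and pick a decreasing positive sequence $c_0>c_1>c_2>\cdots\to 0$, setting $\ell(e):=c_{\mathrm{lev}(e)}$; condition~(ii) is then immediate. The sequence $(c_n)$ must be chosen so that $c_0$ is large enough (say $c_0\ge 2$) that no $d_\ell$-ball of radius $1$ contains $A$ or blocks all ``escape directions'' from $A$, and so that the decay of $c_n$ is slow enough that any closed $d_\ell$-ball $B:=B_p(1)$ of radius $1$ is contained in a graph-annulus of controlled width around $p$. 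Bounded degrees then yield $|B\cap V(G)|<\infty$.

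\textbf{Escape argument.} Given any such ball $B$, we may assume $A\cap B=\emptyset$ (otherwise $B$ is not a valid separator under the usual convention). Set $S:=(B\cap V(G))\cup V(e_p)$, where $e_p$ is the (at most one) edge whose interior contains $p$; then $S$ is finite and $A\not\subseteq S$. Combining the hypothesis of infinitely many pairwise disjoint rays with the choice of $A$ (through a Menger-type argument on the ``local'' structure of $G$ near $B$), one finds a ray $R$ in $G\setminus S$ starting in $A\setminus S$. The triangle inequality then shows $R\cap B=\emptyset$ inside $X$: any edge $e\neq e_p$ of $R$ has both endpoints at $d_\ell$-distance $>1$ from $p$ and does not contain $p$, hence its entire image lies at $d_\ell$-distance $>1$ from $p$. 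Thus $R$ witnesses that $B$ does not separate $A$ from $\infty$.

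\textbf{Main obstacle.} The main technical obstacle is the coupled choice of $A$ and $(c_n)$: the sequence must be calibrated against the graph's local connectivity at each scale using the infinite disjoint-ray family, so that even though $|B\cap V(G)|$ can grow with the graph-distance from $p$ to $A$, the set $S$ never becomes a graph-cut between $A$ and $\infty$. In sparsely-branching graphs such as subcubic trees, no single vertex has unbounded graph-connectivity to $\infty$, so $A$ cannot be a single vertex but must be a carefully chosen finite ``net'' deep in $G$, calibrated to the local branching structure. Extracting this $A$ together with $(c_n)$ from the disjoint-ray family, and verifying the escape argument uniformly in the location of $p$, is expected to be the bulk of the proof.
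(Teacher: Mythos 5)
Your overall shape is right --- radially decreasing edge-lengths, balls of $d_\ell$-radius $1$ confined to an annulus, and an escape via the disjoint rays --- but the step you defer as ``the bulk of the proof'' is exactly the step that carries the argument, and the mechanism you gesture at (a ``Menger-type argument on the local connectivity near $B$'') is not the one that works. The paper's escape argument is a pure counting/pigeonhole argument, not a connectivity argument: once the ball $B$ is confined to two consecutive annuli, bounded degree $k$ gives $|B\cap V|\le k^{n+1}+1$ where $n$ indexes the annulus; the annulus radii $r_n$ are then chosen (using the infinite family of disjoint rays) so that at least $k^{n+3}+2$ of the rays already meet $B_o(r_{n-2})$. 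Since the rays are pairwise disjoint and outnumber the vertices of $B$, some ray misses $B$ entirely, and a shortest path from that ray back to $A$ lives in the inner annuli, which $B$ cannot reach. Your ``Main obstacle'' paragraph worries about local branching and connectivity (e.g.\ subcubic trees), but none of that is relevant --- only the comparison between the cardinality of $B$ and the number of disjoint rays crossing its annulus matters --- so the proposal is missing the key idea rather than merely its execution.

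Two further calibration points that your setup leaves unresolved. First, defining $\mathrm{lev}(e)$ as the exact graph distance from $A$ gives unit-width level sets, and then confining a radius-$1$ ball to boundedly many levels forces delicate conditions on $(c_n)$; the paper instead uses annuli $S_n$ of graph-width greater than $n$ carrying edge-length $1/n$, so that crossing a whole annulus costs more than $1$ and the confinement $B\subseteq S_{n-1}\cup S_n$ or $B\subseteq S_n\cup S_{n+1}$ is immediate. Second, you must in any case ensure the radial sums $\sum_n c_n$ diverge: if they converge, a single ball of radius $1$ can contain an entire tail of $G$, and condition \ref{non sep} fails outright. Neither of these is fatal to your scheme, but together with the missing counting argument they mean the proposal does not yet constitute a proof.
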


Let us first see how this implies \Tr{thm div rays}.

\begin{proof}[Proof of \Tr{thm div rays}]
Applying \Cr{cor inf Menger} to $G$ endowed with $d_\ell$ as in \Lr{lem balls}, we obtain a pair of rays $R,L$ starting at $A$ \st\ $d_\ell(R,L)>b$ for some (universal) constant $b>0$. Let $E_n:= \{e\in E \mid \ell(e)\geq 1/n\}$. Thus by \ref{to zero}, $E_n$ is finite. Let $E'_n$ be the set of edges of $G$ at distance at most $n$ from $E_n$ \wrt\ $d_G$, which is finite too since \g is \lf. To see that $R,L$ diverge, choose tails $R',L'$ avoiding $E'_n$. 
Notice that any  \pth{R'}{L'}\ $P$ either avoids $E_n$ or contains at least $n$ edges in $E'_n \sm E_n$. The fact that $\ell(P)>b$ implies that $P$ contains at least $\min\{n,bn\}$ edges, proving that $R,L$ diverge.  
\end{proof}

It remains to prove \Lr{lem balls}.

\begin{proof}[Proof of \Lr{lem balls}]
Let \seq{R} be a sequence of pairwise disjoint rays in \G. Let $k$ be an upper bound on the vertex-degrees of $G$. Fix a vertex $o\in V$. For $n=1,2,\ldots$, choose $r_n \in \N$ \leth\ \begin{enumerate}
\item \label{meets} $B_o(r_n)$ meets at least $k^{n+3}+2$ of the $R_i$, and 
\item \label{plus n}  $r_n > r_{n-1} + n$,
\end{enumerate}
where we set $r_0:= 1$, say. Let $S_1:= B_o(r_1)$, and for $n=2,3, \ldots$ let $S_n:=B_o(r_n) \sm S_{n-1}$. Assign length $\ell(e)= 1/n$ for each edge $e$ in $S_n,n\geq 1$. 

Clearly, this assignment satisfies property \ref{to zero} of the statement as each $S_n$ is finite. We claim that $\ell$ also satisfies \ref{non sep} with $A:= \bigcup_{1\leq i \leq 4}S_i$. To see this, pick any $x\in S_n$, and let $B$ denote its ball of radius 1 \wrt\ $d_\ell$. Thus we have to show that \ti\ a ray in $G \sm B$ starting in $A$. 

It follows easily from \ref{plus n} that 
\labtequ{B in Sn}{$B \subseteq S_{n-1} \cup S_n$ or  $B \subseteq S_n \cup S_{n+1}$.} 
Thus for any $y\in B$ we have $d_G(x,y)\leq n+1$ because $d_\ell(x,y)\leq d_G(x,y)/(n+1)$ by the previous remark. This means that $B$ has radius at most $n+1$ in $d_G$, and so it contains at most $k^{n+1}+1$ vertices since \g has maximum degree $k$. We consider two cases: if $n\geq 3$, then by \ref{meets}, $B$ avoids at least one ray $R_i$ meeting $S_{n-2}$. Joining that ray to $A$ by a shortest possible path ---which avoids $B$ by \eqref{B in Sn}--- we obtain a ray starting in $A$ and avoiding $B$. If $n< 3$, then again by \eqref{B in Sn} we can find a ray starting in $S_4\subset A$ and avoiding $B$. This proves that \ref{non sep} is satisfied.
\end{proof}

\section{Further problems} \label{sec OP}
\pp{added questions and made comments in the next couple of paragraphs}
As we saw in the introduction, \Cnr{conj fat min} can be very difficult even for a simple graph $H$. The following special case is of particular interest (\Cnr{conj fat min} can be formulated with $H$ replaced by a finite set of finite graphs):
\begin{conjecture}[Coarse Kuratowski--Wagner theorem] \label{Kur}
A length space $X$ is quasi-isometric to a planar graph \iff\ it has no asymptotic $K_5$ or $K_{3,3}$ minor. 
\end{conjecture}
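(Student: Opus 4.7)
The forward direction is immediate: by \Or{invariance}, asymptotic minors are preserved under quasi-isometries, and a planar graph has neither $K_5$ nor $K_{3,3}$ as a minor by the classical Kuratowski--Wagner theorem, let alone as an asymptotic minor. So the entire plan concerns the backward direction, which is the case $H\in\{K_5,K_{3,3}\}$ of \Cnr{conj fat min}.

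The plan is to extend the layered-partition method of \Tr{triangle-free} and \Tr{thm star}. By \Or{qi graph} we may assume $X$ is a graph. Fix a root $o$, partition $V(X)$ into annuli $A_n$ of width $M$ proportional to $K$, decompose each $A_n$ into its $K$-near-components (\emph{boxes}), and merge tall boxes with the short boxes they feed into to form super-boxes, exactly as in \Tr{thm star}. Let $G$ be the quotient graph obtained by contracting each super-box to a vertex and joining two super-boxes by an edge whenever $X$ has an edge between them. Once one establishes a diameter bound $D=D(K)$ for super-boxes, $G$ will be quasi-isometric to $X$, and it will remain to show that $G$ is planar (or quasi-isometric to a planar graph). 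The natural route is by contradiction: a $K_5$ or $K_{3,3}$ minor $\Delta$ in $G$ should lift to a $K$-fat $\Delta$-minor in $X$, with \Lr{lem subdiv} used to absorb subdivisions. Branch sets lift to unions of super-boxes automatically; the real content is to upgrade the branch \emph{paths} of $\Delta$ to \emph{pairwise $K$-far} paths between the corresponding branch sets in $X$, which is precisely what a higher-order coarse Menger theorem would provide.

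The main obstacle is exactly this last step: we would need \Cnr{conj MM} for $n=3$ (to lift $K_{3,3}$ minors) and for $n=4$ (to lift $K_5$ minors), and this conjecture is open for all $n\geq 3$, with the authors already warning that even $n=3$ ``seems very difficult''. A secondary difficulty is the super-box diameter bound itself: forbidding only $K_5$ and $K_{3,3}$ does \emph{not} forbid high-degree coarse stars, so the star-based pigeonhole of Claim~a in \Tr{thm star} does not transfer directly; instead one would have to glue many geodesics emanating from a hypothetical wide box into enough independent coarse cycles to force one of the two forbidden minors, a genuinely cycle-based argument with no direct precedent in the present paper. Pending progress on \Cnr{conj MM}, sensible intermediate targets are the weakening where ``planar graph'' is replaced by ``graph of bounded Euler genus'', or the one-minor version forbidding only $K_{3,3}$, where 2-separator tree-decompositions combined with the already-proved $n=2$ case \Tr{Menger} may suffice and would already go beyond \Cr{cor OP}.
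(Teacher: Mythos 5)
This statement is \Cnr{Kur}, which the paper states as an \emph{open conjecture} and does not prove; there is therefore no proof of the authors' to compare yours against, and what you have written is a research plan rather than a proof. Judged as a plan, it has two fatal gaps beyond the one you acknowledge. First, the step you identify as the ``main obstacle''---lifting branch paths via \Cnr{conj MM} for $n=3$ and $n=4$---is not merely open: the paper's final section records that Nguyen, Scott \& Seymour found a counterexample to \Cnr{conj MM}, and that this was used by Davies, Hickingbotham, Illingworth \& McCarty to disprove \Cnr{conj fat min} itself. A route that factors through a refuted conjecture cannot be the intended one, and the status of \Cnr{Kur} itself must now be regarded as delicate rather than as a safe target of the general machinery.

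Second, and more structurally, the layered-partition method of \Tr{triangle-free} and \Tr{thm star} cannot even get started here, because the box-diameter bound (Claim~1 / Claim~a) is simply \emph{false} when only $K_5$ and $K_{3,3}$ are forbidden. Take $X=\Z^2$ with the lattice metric: it is planar, so it has no $2$-fat $K_5$ or $K_{3,3}$ minor, yet the sphere $S_n$ (and likewise any annulus $A_n$) is a single $5K$-near-component of diameter $2n$. Consequently the quotient graph $G$ obtained by contracting boxes degenerates to a ray, and the contraction map is not a quasi-isometry. This is not a ``secondary difficulty'' to be repaired by a cycle-based pigeonhole; the diameter bound is what makes the quotient a quasi-isometry in the $K_3$ and $K_{1,m}$ cases, and it is unavailable for any target class containing $\Z^2$. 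Any proof of \Cnr{Kur} would need a genuinely two-dimensional decomposition (or a different strategy altogether), not an extension of the one-dimensional layered partition. Your suggested fallbacks (bounded Euler genus, or forbidding only $K_{3,3}$) inherit the same two problems, since $\Z^2$ lies in those classes as well.
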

Here, and more generally in \Cnr{conj fat min}, we may have to allow the resulting planar graph to be edge-weighted; we do not know whether every planar graph with arbitrary real-valued edge-lengths (used to induce a length space) is quasi-isometric to a planar graph with all edge-lengths equal to 1. 

This special case is well-motivated in itself due to widespread interest in planar graph metrics, and due to the interest in groups acting on planar surfaces \cite{Kleinian}. 

Another interesting class of graphs is that of graphs of finite tree width.

\begin{conjecture}[Coarse grid theorem] \label{con grid}
A length space $X$ is quasi-isometric to a graph of finite tree width \iff\ it does not have the $n\times n$  grid as an   asymptotic minor for some \nin. 
\end{conjecture}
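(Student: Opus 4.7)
The backward implication is immediate: by \Or{invariance}, asymptotic minors are preserved under quasi-isometries of length spaces, and a graph of tree-width at most $k$ contains no sufficiently large grid as a minor (hence not as an asymptotic minor), since minors do not increase tree-width.

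For the forward direction, my plan is to follow the layered-annulus template developed in the proofs of \Tr{triangle-free} and \Tr{thm star}, but to extract a tree decomposition in place of a tree. Assume $X$ has no $K$-fat $(n\times n)$-grid minor; by \Or{qi graph} we may take $X$ to be a graph. Fix a basepoint $o$, partition $X$ into annuli $A_j$ of width $M=M(K,n)$, and partition each $A_j$ into its $K$-near-components (``boxes'') as in \Tr{thm star}. The super-box device of that proof ensures that incidence between consecutive annuli is tree-like modulo short-lived branching, and reduces the problem to understanding the internal structure of a single super-box.

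The heart of the argument would be a coarse analogue of the classical tree-width / grid-minor duality. I would introduce a notion of \emph{coarse bramble}: a family of uniformly thick connected subsets of $X$, pairwise within distance $K$ of each other, and aim to show that a coarse bramble of order $N=N(n,K)$ forces a $K'$-fat $(n\times n)$-grid minor (with $K'$ depending only on $K$ and $n$), via a pigeonhole-and-linking argument carried out metrically so that the extracted branch sets and branch paths remain pairwise far apart. The contrapositive would bound the coarse-bramble number of each super-box, which, as in the classical setting, should translate into a tree decomposition of $X$ whose bags have diameter bounded by some $f(K,n)$ and whose adhesions are uniformly bounded. Contracting each bag to a point then produces a graph quasi-isometric to $X$ of finite tree-width.

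The principal obstacle is the coarse bramble $\Rightarrow$ fat grid step. Classically, this is already the most delicate ingredient of the grid theorem, and the metric setting introduces genuine new difficulties: when refining a coarse bramble to extract a grid, one must simultaneously enforce that the extracted branch sets and branch paths are pairwise $K'$-far apart, which no naive iteration guarantees. A coarse Menger-type tool seems indispensable as an input, and \Cnr{conj MM} is open beyond the $n=2$ case proved in \Sr{sec Menger}. Any resolution of this step would settle \Cnr{conj fat min} for $H$ a grid, and would constitute strong evidence for the general conjecture.
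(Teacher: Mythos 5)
The statement you are addressing is \Cnr{con grid}, which the paper poses as an open conjecture and does not prove, so there is no proof of the authors' to compare against. Your backward direction is correct and is the easy half: the $n\times n$ grid has tree-width $n$, so a graph of tree-width $k$ has no $(k+2)\times(k+2)$ grid minor, a fortiori no fat or asymptotic one, and asymptotic minors are invariant under quasi-isometry by \Or{invariance}.

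The forward direction, however, is a programme rather than a proof, and you say so yourself. Three concrete gaps. First, the ``coarse bramble $\Rightarrow$ fat grid'' step that you identify as the heart of the argument is left entirely open; it is essentially the whole content of the conjecture. Second, you describe a coarse Menger-type tool as an indispensable input, but the paper's closing section reports that \Cnr{conj MM} is false in general (Nguyen, Scott \& Seymour), and that this counterexample was used to disprove \Cnr{conj fat min}; any route through coarse Menger must therefore be confined to whatever special cases survive, and cannot be invoked as a black box. Third, even granting a bound on the coarse-bramble number of each super-box, the passage from that bound to a tree decomposition whose \emph{bags have diameter} at most $f(K,n)$ is not what classical tree-width duality delivers: duality controls the order (cardinality) of bags, not their diameter, and a decomposition into bounded-diameter bags with bounded adhesion is essentially equivalent to the quasi-isometry statement you are trying to prove, so as written this step is circular. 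The plan is a sensible extension of the layered-partition template of \Tr{triangle-free} and \Tr{thm star}, but it does not establish the statement.
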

One can also ask whether under the same hypothesis $asdim (X)\leq 1$ (as this is weaker). 

Our \Cnr{conj fat min}  would imply that the fundamental Robertson-Seymour graph minor theory has a coarse geometric generalisation. One can of course formulate weaker versions of our conjecture in the same direction. For example, one could ask whether excluding an infinite family of asymptotic minors is equivalent to excluding a finite such family. In geometry and topology one often asks for the `nicest' metric among all metrics in a homeomorphism class. This is the classical topic of uniformisation of surfaces,
and this point of view was crucial in the recent classification of 3-manifolds after Thurston and Perelman.  \Cnr{conj fat min}  can be seen as a coarse analog to
uniformisation: we ask for a `nice' representative in a quasi-isometry class of graphs.

\medskip

Our next conjecture is a coarse version of a result of Thomassen  \cite{ThoHad} saying that an 1-ended vertex transitive graph is either planar or has an infinite clique as a minor. 

\begin{conjecture} \label{Tho}
Let $G$ be a locally finite, vertex-transitive graph. Then either $G$ is quasi-isometric to a planar graph, or it contains every finite graph as an asymptotic minor.
\end{conjecture}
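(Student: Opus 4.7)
The plan is to argue by contrapositive: assume $G$ is not quasi-isometric to any planar graph, and show that $G$ contains every finite graph as an asymptotic minor. Since every finite graph is a minor of some $K_n$, and asymptotic minors are closed under taking minors, it suffices to prove $\asm{K_n}{G}$ for every $n$.

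First I would reduce to the case where $G$ is one-ended. A locally finite vertex-transitive graph has $0$, $1$, $2$ or infinitely many ends (Jung). If $G$ is finite then it is quasi-isometric to a point, and if $G$ has exactly two ends then $G$ is quasi-isometric to $\Z$; both are planar, contradicting our hypothesis. In the infinitely-many-ends case, Thomassen--Woess accessibility of locally finite vertex-transitive graphs yields a tree-decomposition whose torsos are finite or one-ended vertex-transitive subgraphs; the quasi-isometry type of $G$ is built tree-like from these pieces, so if each one-ended piece is quasi-isometric to a planar graph then so is $G$, reducing to the main case.

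For one-ended $G$, apply Thomassen's theorem stated just before \Cnr{Tho}: either $G$ is planar (again contradicting the hypothesis) or $G$ contains $K_{\aleph_0}$ as a minor, witnessed by pairwise disjoint connected branch sets $\{B_i\}_{i\in\N}$ joined pairwise by disjoint branch paths. The plan is to upgrade this combinatorial minor to an asymptotic $K_n$ minor using vertex-transitivity. Given $n$ and $K>0$, pick $n$ vertices $v_1,\dots,v_n$ pairwise at distance greater than $100K$ in $G$. For each $i$, use vertex-transitivity to translate a large finite sub-minor of $K_{\aleph_0}$ so that one of its branch sets contains $v_i$; this yields $n$ local ``hubs'' pairwise far apart. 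The remaining task is to connect these hubs by $\binom{n}{2}$ branch paths that are mutually $K$-apart and $K$-apart from all hubs except their own endpoints. Since $G$ is one-ended, excess path length is cheap: routing through the single end, one expects this to follow from an $n$-commodity coarse Menger theorem (the general case of \Cnr{conj MM}) combined with vertex-transitivity to avoid previously placed paths.

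The principal obstacle is this last step, where we pass from a combinatorial $K_{\aleph_0}$ minor plus vertex-transitivity to arbitrarily fat $K_n$ minors. Two difficulties compound: first, controlling $\binom{n}{2}$ simultaneous branch paths so that they remain pairwise far apart requires a multi-commodity coarse Menger theorem beyond the $n=2$ case proved in \Sr{sec Menger}; second, even granted such routing, one must show that vertex-transitivity propagates non-planarity \emph{uniformly across scales}, not merely at the scale witnessed by the $K_{\aleph_0}$-minor itself. A plausible alternative route goes through \Cnr{Kur}: failure of being quasi-isometric to a planar graph would yield $\asm{K_5}{G}$ or $\asm{K_{3,3}}{G}$, and from such an asymptotic non-planar seed, vertex-transitivity together with an amplification exploiting connectivity of the unique end ought to force $\asm{K_n}{G}$ for all $n$. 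Either route depends on currently unresolved conjectures of this paper, which suggests that \Cnr{Tho} sits genuinely at the interface of coarse geometry and the structure theory of vertex-transitive graphs.
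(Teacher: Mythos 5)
The statement you are trying to prove is stated in the paper as \Cnr{Tho}, i.e.\ as an open conjecture: the paper gives no proof, and its closing section records only that the special case of finitely presented Cayley graphs was later settled by MacManus. So your text should be judged as a proof attempt of an open problem, and as such it has several genuine gaps. First, the reduction to the one-ended case is not sound: locally finite vertex-transitive graphs need not be accessible (Dunwoody constructed inaccessible finitely generated groups, hence inaccessible Cayley graphs), so the Thomassen--Woess machinery does not in general produce the tree-decomposition into finite and one-ended vertex-transitive pieces that you invoke; even where it does, the claim that a tree-like arrangement of pieces quasi-isometric to planar graphs is itself quasi-isometric to a planar graph is a nontrivial gluing statement that you assert without argument. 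Second, and more fundamentally, the passage from Thomassen's \emph{combinatorial} $K_{\aleph_0}$ minor to a $K$-fat $K_n$ minor for every $K$ is precisely the entire content of the conjecture, and your sketch does not supply it. The paper's own example $L'=L\times K_2$ shows that having every finite graph as an ordinary minor says nothing about fat minors, so the ``hubs'' you obtain by translating a finite sub-minor of the $K_{\aleph_0}$ minor are fat only at scale $0$; vertex-transitivity lets you place many copies far apart, but gives no mechanism for making any single copy fat.

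The tools you propose for the amplification step make matters worse rather than better. You appeal to ``an $n$-commodity coarse Menger theorem (the general case of \Cnr{conj MM})'', but the paper's final section records that \Cnr{conj MM} has been refuted by Nguyen, Scott \& Seymour, and that the general form of \Cnr{conj fat min} (of which your alternative route via \Cnr{Kur} is an instance) has been disproved by Davies, Hickingbotham, Illingworth \& McCarty. So both of the routes you outline rest on statements that are either open or false. Your closing paragraph honestly identifies the obstruction, which is to your credit, but the net effect is that the proposal is a programme rather than a proof: no step after the (flawed) end-count reduction is actually carried out, and the conjecture remains open after your argument exactly where it was before it.
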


Recall that a graph \g is \defi{$k$-planar}, if it can be drawn in $\R^2$ so that each of its edges crosses at most $k$ other edges.
\begin{problem} \label{k planar}
Let $G$ be a locally finite, vertex-transitive graph. Is it true that $G$ is quasi-isometric to a planar graph \iff\ it is $k$-planar for some $k\in \N$?
\end{problem}
Vertex-transitivity is essential here, for otherwise we could let \g be a sequence of subdivisions of $K_5$.

Our next problem is motivated by the following theorem of Khukhro: 
\begin{theorem}[\cite{KhuCha}] \label{Khukhro}
A finitely generated, infinite group $\Gamma$ is virtually free, if and only if for every finitely generated Cayley graph \g of $\Gamma$, there is some finite graph that is not a minor of \G.
\end{theorem}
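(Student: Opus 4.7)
My plan is to prove both directions by relating virtual freeness of $\Gamma$ to tree-decompositions (equivalently, exclusion of large grid minors) of its Cayley graphs, using the Stallings--Dunwoody structure theorem as the bridge between the algebraic and combinatorial sides.

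For the forward direction, assume $\Gamma$ is virtually free. By Stallings' ends theorem combined with Dunwoody's accessibility theorem, $\Gamma$ is the fundamental group of a finite graph of finite groups, and hence acts cocompactly on a simplicial tree $T$ with finite vertex and edge stabilizers. Fix any Cayley graph $G$ of $\Gamma$ with finite generating set $S$. Using the $\Gamma$-action on $T$, I would build a tree-decomposition of $G$ indexed by $V(T)$, whose bag at a node $t$ is the $S$-neighbourhood (of some bounded radius depending on $S$ and the splitting) of the corresponding finite stabilizer coset. Cocompactness plus finite stabilizers force all bags to have size bounded by some $c=c(S,T)$, so $G$ has bounded tree-width. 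By the excluded grid theorem, some $n\times n$ grid fails to be a minor of $G$, supplying the required finite graph.

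For the converse, suppose $\Gamma$ is not virtually free. By Stallings--Dunwoody, either $\Gamma$ is one-ended, or it splits as a finite graph of groups with at least one one-ended vertex group (the virtually $\mathbb{Z}$ case being already virtually free). I would reduce to the one-ended case by choosing a generating set adapted to the splitting so that a one-ended vertex group appears as a Cayley subgraph. For one-ended $\Gamma$ the goal is to exhibit a Cayley graph in which every finite graph is a minor, which it suffices to obtain arbitrarily large grid (equivalently, clique) minors. The intuition is that one-endedness forces spheres of large radius to be connected and of bounded ``width'', and vertex-transitivity lets one translate a partial grid construction across $G$; adding enough generators allows one to realise branch sets and branch paths as geodesic segments linked through these thick spheres.

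The main obstacle is unquestionably the converse: translating the qualitative fact ``one end and not free'' into a quantitative embedding of arbitrarily large grids as minors is delicate, because for one-ended groups of exponential growth balls are tree-like and a naive ``large boundary $\Rightarrow$ large grid'' argument fails. A cleaner route would be to pass through the coarse geometry developed in the paper: by Manning's Theorem~\ref{triangle-free}, a graph is quasi-isometric to a tree iff it has no $K$-fat $K_3$ minor for some $K$, and virtually free groups are exactly those quasi-isometric to trees. Hence if $\Gamma$ is not virtually free, every Cayley graph has $\asm{K_3}{G}$. The remaining (and non-trivial) step is to bootstrap asymptotic $K_3$ minors in a \emph{vertex-transitive} graph into asymptotic $K_n$ minors for every $n$, at which point one invokes vertex-transitivity and a suitable thickening of the generating set to upgrade asymptotic minors into actual finite minors of a single well-chosen Cayley graph.
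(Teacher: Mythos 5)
First, a caveat: the paper does not prove this statement at all --- it is quoted from Khukhro \cite{KhuCha} as a known theorem --- so there is no in-paper argument to compare yours against, and I can only assess your sketch on its own terms. Your forward direction is fine in outline (and is the direction the paper itself regards as the easy one): a cocompact action on the Bass--Serre tree with finite stabilizers gives every Cayley graph bounded tree-width, and since tree-width is minor-monotone and the $n\times n$ grid has tree-width $n$, some grid is excluded. Note you only need this trivial monotonicity, not the excluded grid theorem.

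The converse, however, contains two concrete errors. First, ``arbitrarily large grid (equivalently, clique) minors'' is false: grids are planar, so a graph can contain every grid as a minor while excluding $K_5$ --- the square lattice on $\Z^2$ does exactly this. Since the negation of the right-hand side demands a Cayley graph containing \emph{every} finite graph (equivalently, every $K_n$) as a minor, producing grids buys you nothing. Second, and more fundamentally, the proposed bootstrap ``asymptotic $K_3$ minor $\Rightarrow$ asymptotic $K_n$ minor for vertex-transitive graphs'' is false, and the entire coarse route cannot work: $\Gamma=\Z^2$ is not virtually free, yet every one of its Cayley graphs is quasi-isometric to the Euclidean plane and hence (by the same reasoning the paper applies to $L\times K_2$) has no $K$-fat $K_5$ minor for large $K$; asymptotic minors are moreover invariant under change of generating set (\Or{invariance}), so they cannot detect the phenomenon the theorem is about. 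For $\Z^2$ the required clique minors in a well-chosen Cayley graph are necessarily non-fat, created at bounded scale by enlarging the generating set. That step --- how failure of virtual freeness lets one choose generators producing unbounded local complexity, and hence all clique minors, in a single Cayley graph --- is the heart of Khukhro's theorem and is exactly what your sketch leaves open.
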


\begin{conjecture} \label{virt planar}
A finitely generated group $\Gamma$ is virtually planar, if and only if for every finitely generated Cayley graph \g of $\Gamma$, there is some finite graph that is not an asymptotic minor of \G. 
\end{conjecture}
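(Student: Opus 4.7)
The forward direction is direct. Suppose $\Gamma$ is virtually planar, so there exists a finite-index subgroup $\Gamma' \leq \Gamma$ admitting a finitely generated planar Cayley graph $G'$. Any finitely generated Cayley graph \g of $\Gamma$ is quasi-isometric to $G'$ by the standard fact that finite-index subgroups yield quasi-isometric Cayley graphs. Since $G'$ is planar it contains no $K_5$ minor, hence no $K$-fat $K_5$ minor for any $K$, giving $\nasm{K_5}{G'}$. By \Or{invariance} this transfers to \G, so $H=K_5$ is a finite graph that is not an asymptotic minor of any Cayley graph of $\Gamma$.

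For the backward direction, fix any finitely generated Cayley graph \g of $\Gamma$ and let $H$ be a finite graph with $\nasm{H}{G}$. My plan has two stages. \textbf{Stage 1:} produce a quasi-isometry from \g to a planar graph. Since \g is locally finite and vertex-transitive, \Cnr{Tho} gives the dichotomy that either \g is quasi-isometric to a planar graph, or every finite graph is an asymptotic minor of \G. The latter is ruled out by the choice of $H$, so \g is quasi-isometric to some planar graph $P$. (An alternative route via \Cnr{conj fat min} and \Cnr{Kur} requires checking that both $K_5$ and $K_{3,3}$ are not asymptotic minors, which also appears to need the dichotomy from \Cnr{Tho}.)

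\textbf{Stage 2:} from ``the Cayley graph \g of $\Gamma$ is quasi-isometric to a planar graph'', conclude that $\Gamma$ is virtually planar. The plan is to upgrade the coarse planarity of $P$ to a topological action of $\Gamma$ on a planar surface, apply Dunwoody's accessibility theorem to split $\Gamma$ as a graph of groups along finite or virtually cyclic edge groups in a manner respecting the planar structure, and then argue piece by piece that each vertex group admits a planar Cayley graph, so that a Bass--Serre reconstruction yields virtual planarity.

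Stage~2 is the main obstacle. In contrast to Khukhro's analogue (\Tr{Khukhro})---where QI to a tree immediately forces infinitely many ends and a Stallings-type decomposition---there is no short passage from ``coarsely planar Cayley graph'' to ``planar Cayley graph of a finite-index subgroup''. Both the topological upgrade and the compatibility of the splitting with planarity are substantial steps in the vein of the QI-classification of surface groups after Bonk--Kleiner, Maillot and Cannon, and have not been carried out at this level of generality. Since Stage~1 already rests on \Cnr{Tho}, \Cnr{virt planar} should be viewed as a consequence of a package of open conjectures rather than a theorem accessible by current techniques.
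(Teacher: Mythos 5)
This statement is a \emph{conjecture} in the paper, not a theorem: the authors prove only the forward implication, and they do so exactly as you do --- by the Svarc--Milnor lemma all finitely generated Cayley graphs of $\Gamma$ and of the finite-index planar subgroup are quasi-isometric, and \Or{invariance} then transfers the absence of an asymptotic $K_5$ minor from the planar Cayley graph to \G. (Your observation that a planar graph has no $K_5$ minor, hence no $K$-fat $K_5$ minor for any $K$, is the right way to ground this.) So your forward direction is correct and matches the paper.

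For the backward direction there is nothing in the paper to compare against, and your own assessment is accurate: it remains open. Two caveats on your sketch, though. First, your Stage~1 is not an independent reduction --- it invokes \Cnr{Tho}, which is itself an open conjecture of the same paper (only the finitely presented case was later settled by MacManus), so even as a conditional argument your proof rests on an unproved statement from the very source under review. Second, Stage~2 (upgrading ``quasi-isometric to a planar graph'' to ``virtually planar'') is, as you say, a substantial open problem in its own right; note that it is not even clear that the conjunction of Stages 1 and 2 is the intended route, since one could imagine attacking the conjecture directly via accessibility without passing through \Cnr{Tho}. In short: your write-up correctly proves exactly what the paper proves and correctly identifies the rest as open; it should not be presented as a proof of the full statement.
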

\defi{Virtually planar} here means that $\Gamma$ has a finite index subgroup $H$ which has a planar \Cg.  The forward implication of this is easy: by the Svarc--Milnor lemma, any finitely generated \Cg s of  $H$ and $\Gamma$
 are quasi-isometric, and so the statement follows from \Or{invariance}. (The analogous implication of \Tr{Khukhro} was proved `by hand' by Ostrovskii \& Rosenthal \cite{OstRosMet}.)

\medskip
As already mentioned, we expect \Cnr{conj MM} to be very difficult for $n\geq 3$. The following is a related algorithmic question. Let MM3 denote the algorithmic problem that takes as input a graph $G$ on $n$ vertices and two subsets  $A,Z$ of its vertex set, and asks whether there is a triple of \pths{A}{Z}\ that are pairwise at distance at least 3. 
\begin{conjecture} \label{conj MM NPc}
MM3 is NP-complete.
\end{conjecture}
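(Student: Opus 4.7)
The proof splits into NP-membership and NP-hardness.

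\emph{Membership in NP.} A polynomial-size certificate is a triple of $A$-$Z$ paths $P_1,P_2,P_3$ in $G$. Verification consists of checking that each $P_i$ is a simple path from some vertex of $A$ to some vertex of $Z$, and that $d_G(u,v)\geq 3$ for every $u\in V(P_i)$ and $v\in V(P_j)$ with $i\neq j$. The latter runs in polynomial time: a single round of BFS from each vertex of each $P_i$ suffices. Equivalently, one checks that no two paths share a vertex, no edge joins two of them, and no vertex of $G$ is adjacent to more than one of them.

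\emph{NP-hardness.} I would attempt a polynomial reduction from 3-SAT. Given $\Phi$ with variables $x_1,\ldots,x_n$ and clauses $C_1,\ldots,C_m$, the goal is to construct a graph $G_\Phi$ with distinguished subsets $A,Z$ such that $G_\Phi$ admits three $A$-$Z$ paths at pairwise distance $\geq 3$ if and only if $\Phi$ is satisfiable. The construction I envisage has three components: (a) a \emph{backbone} of three parallel long rails running from vertices of $A$ to vertices of $Z$, with enough padding that they lie mutually at distance $\geq 3$ wherever no gadget is attached; (b) for each variable $x_i$, a \emph{variable gadget} interlocking the three rails and offering two alternative local routings that correspond to the truth value of $x_i$, forcing all three paths to read the same assignment; (c) for each clause $C_j$, a \emph{clause gadget} whose geometry permits three distance-$\geq 3$ passages if and only if at least one literal of $C_j$ is satisfied by the chosen assignment. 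Completeness (a satisfying assignment yields three valid paths) should follow from an explicit routing argument. Soundness (three valid paths force a satisfying assignment) should follow by showing that the distance-$\geq 3$ condition leaves no freedom within each variable gadget other than the binary truth choice, and that traversing each clause gadget requires at least one true literal.

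\emph{Main obstacle.} The hard part is the gadget design. The distance-$\geq 3$ constraint is substantially more rigid than vertex-disjointness---it also forbids adjacent pairs and common neighbours---yet the three paths still enjoy substantial routing freedom through the construction. To rule out spurious solutions, one typically needs to inflate each vertex of a naive gadget into a local cluster (say a triangle or short clique) and to calibrate subdivision lengths so that only the intended traversals realise the distance bound. An equally viable starting point would be to reduce from a known hard variant such as $k$-\emph{induced disjoint paths}, converting ``induced'' into ``distance $\geq 3$'' by local vertex-inflation; the essential combinatorial difficulty would remain the same.
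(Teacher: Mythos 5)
This statement is posed as a conjecture in the paper and is not proved there; the authors only note (in the ``Latest progress'' section) that it was subsequently established by Balig\'acs \& MacManus. So there is no in-paper proof to compare against, and your proposal must be judged on its own merits.

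Your NP-membership argument is correct and routine: a triple of paths is a polynomial certificate, and the pairwise distance-$\geq 3$ condition is checkable by BFS (equivalently, no shared vertices, no edges between two paths, no common neighbours). The NP-hardness part, however, is not a proof but a statement of intent. You correctly identify the standard template (backbone rails, variable gadgets enforcing a consistent truth assignment, clause gadgets passable only under a true literal) and you correctly identify where the difficulty lies, but you construct no gadget, prove no completeness or soundness claim, and do not resolve the tension you yourself flag: the distance-$\geq 3$ constraint is rigid enough to break naive disjoint-paths gadgets (which rely on vertices being allowed to be adjacent across paths) yet loose enough that paths can detour around intended bottlenecks unless subdivision lengths are calibrated globally, not just locally. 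Note also that in MM3 the three paths have free endpoints in the sets $A$ and $Z$ rather than prescribed terminal pairs, which defeats the usual disjoint-paths reductions that rely on forcing each path through its own terminal; your plan does not address how the gadgets control which path enters which rail. The suggestion to reduce from induced disjoint paths via vertex inflation is a plausible alternative starting point, but again the conversion is asserted, not carried out. As it stands, the entire mathematical content of the hardness direction is missing.
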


A classical result of \Erd\ \& P\'osa \cite{ErdPosInd} says that every finite graph has either a $k\cdot K_3$ minor or a set of at most $f(k)$ vertices the removal of which results into a forest, where $f(k) \sim k \log k$ is universal. The following proposes a coarse analogue, motivated by both \Cnr{conj fat min} and \Cnr{conj MM}:

\begin{conjecture}[Coarse \Erd--P\'osa Theorem] \label{Cnr EP}
 There is a function $f: \N \to \N$ and a universal constant $C$, \st\ the following holds \fe\ length space  $X$.
If $n \cdot K_3$ is not a $K$-fat minor of $G$, then \ti\ a set $S$ of at most $f(n)$ points of $X$ \st\ the ball of radius $C K$ around $S$ meets all $K$-fat $K_3$ minors of $X$. (Alternatively, $X$ is a graph, and removing $B_{CK}(S)$ results into a quasi-forest.)
\end{conjecture}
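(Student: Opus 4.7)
I would proceed by induction on $n$, using the equivalent formulation via quasi-forests (valid thanks to \Tr{triangle-free}). The base case $n=1$ is immediate: if $X$ contains no $K$-fat $K_3$ minor, then $S=\emptyset$ works, so $f(1)=0$.

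For the inductive step, assume the statement holds for $n-1$ with constant $C=C(n-1)$, and let $X$ be a length space with no $n\cdot K_3$ as a $K$-fat minor. If $X$ contains no $K$-fat $K_3$ minor we are done, so pick one; the crux of the argument will be to find such a minor $T_0=(B_1,B_2,B_3,P_{12},P_{23},P_{31})$ whose ``total support'' $H(T_0):=\bigcup_i B_i\cup\bigcup_{ij}P_{ij}$ has diameter at most some function $D=D(n,K)$. Granting this, pick any $s_0\in H(T_0)$ and set $X':=X\sm B_{C'K}(s_0)$ for $C':=C+D/K+2$. Any $K$-fat $K_3$ minor of $X'$ is then at distance $>K$ from $T_0$ in $X$, so if $X'$ contained $(n-1)$ pairwise $K$-separated $K$-fat $K_3$ minors, those together with $T_0$ would produce an $n\cdot K_3$ $K$-fat minor of $X$, contradicting the hypothesis. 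The inductive hypothesis applied to $X'$ therefore yields a set $S'$ with $|S'|\le f(n-1)$ such that $B_{CK}(S')$ meets every $K$-fat $K_3$ minor of $X'$. Setting $S:=\{s_0\}\cup S'$ gives a set of size $\le 1+f(n-1)$: triangles within $C'K$ of $s_0$ are hit (by enlarging $C$ to $C'$, absorbed into a universal constant), while the rest avoid $B_{C'K}(s_0)$ and hence lie in $X'$, where they are caught by $B_{CK}(S')$.

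The main obstacle is the existence of a bounded-diameter fat triangle $T_0$, and this is where I expect the real difficulty to lie. The strategy I would pursue is a coarse ``uncrossing/splitting'' argument: take a $K$-fat $K_3$ minor $T$ minimizing $\sum_{i}\mathrm{length}(P_{ij})+\sum_i \diam(B_i)$ (or some surrogate), and try to show that if its diameter exceeds a threshold depending on $n$, then one can subdivide its long branch paths via coarse Menger (\Tr{Menger}) or a suitable iteration thereof to extract two disjoint ``sub-triangles'', and iterate to obtain $n$ pairwise $K$-far $K$-fat triangles, contradicting the hypothesis. Intuitively, a very long branch path must, together with the geodesic structure nearby, fold to produce additional short cycles; making this precise is essentially the heart of the full \Cnr{conj fat min} for $K_3$ at a quantitative level, and may well require the still-open coarse Menger theorem for $n\ge 3$.

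If the existence of a small $T_0$ can be established only with $D$ depending on the \emph{depth} of the induction, the simple bound $f(n)\le n$ coming from the above recursion may have to be replaced by something larger (plausibly $O(n\log n)$, matching the classical \Erd--P\'osa bound). A second technical point is to verify that the induced metric on $X'=X\sm B_{C'K}(s_0)$ remains a length space, or to carry out the induction directly in $X$ by insisting that the sets $S$ we produce consist of points of $X$ and interpreting ``$B_{CK}(S)$ meets every $K$-fat $K_3$ minor of $X'$'' as an external statement; this can be handled by working throughout with a fixed geodesic graph quasi-isometric to $X$ via \Or{qi graph}.
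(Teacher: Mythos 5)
First, a point of orientation: \Cnr{Cnr EP} is stated in the paper as an open conjecture, and the paper contains no proof of it (nor does the ``Latest progress'' section report one), so there is no argument of the authors to compare yours against. The only question is whether your plan stands on its own. It does not, and you partly say so yourself: everything hinges on the unproved claim that $X$ contains a $K$-fat $K_3$ minor $T_0$ whose support has diameter at most some $D(n,K)$, and you defer that claim to a heuristic ``coarse uncrossing'' argument.

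That central claim is in fact false, not merely unproved. Let $X$ be a circle (cycle graph) of circumference $N\gg K$. Every $K_3$ minor of $X$ must use the entire circle (any proper sub-arc is a tree), so $2\cdot K_3$ is not even a minor of $X$ and the hypothesis holds with $n=2$; yet $X$ does have a $K$-fat $K_3$ minor, and every such minor has support all of $X$, hence diameter about $N/2$, which no $D(2,K)$ can bound. Note that the conclusion of the conjecture still holds here --- a single ball $B_{CK}(s_0)$ \emph{meets} the unique triangle --- which shows the correct mechanism is hitting, not covering: one cannot hope to swallow an entire fat triangle inside one ball and induct on the complement. This failure also propagates downstream: your radius $C'=C+D/K+2$ depends on $n$ (and, in the example, on $X$ itself), whereas the conjecture demands a \emph{universal} constant $C$. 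A secondary but genuine issue is the passage to $X'=X\sm B_{C'K}(s_0)$: the induced length metric satisfies $d_{X'}\ge d_X$, so an $(n-1)\cdot K_3$ minor that is $K$-fat \emph{in $X'$} need not be $K$-fat in $X$, and therefore cannot be combined with $T_0$ to contradict the hypothesis on $X$; you flag this at the end but do not resolve it, and it blocks the application of the inductive hypothesis to $X'$ as a standalone length space. As it stands, the proposal is a programme whose key lemma is false rather than a proof.
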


\section{Latest progress}
Many of the questions of this paper have been answered between the original submission and the revision. The aim of this section is to provide an update.

Nguyen, Scott \& Seymour \cite{NgScSeCou} found  a counterexample to \Cnr{conj MM}. This was used by Davies, Hickingbotham,  Illingworth \& McCarty \cite{DHIM} to disprove \Cnr{conj fat min}. \Cnr{conj MM NPc} has been proved by Balig\'acs \& MacManus \cite{BalMcMmet}.

The case $H=K_4$ of  \Cnr{conj fat min} has been proved by Albrechtsen, Jacobs,  Knappe, \& Wollan \cite{AJKW}. Special cases of \Cnr{conj MM}, for $r=2$, and sparse graphs, have been proved by Gartland, Korhonen \& Lokshtanov \cite{GaKoLo} and by Hendrey, Norin, Steiner \& Turcotte \cite{HNST}.
The special case of \Cnr{Tho} for finitely presented Cayley graphs has been proved by MacManus \cite{McMFat}. The “only if” direction of \Prr{k planar} has been proved by Esperet \& Giocanti \cite{EsGiCoa}.

\bibliographystyle{plain}
\bibliography{../../collective}

\begin{thebibliography}{10}

\bibitem{AHJKW}
S.~Albrechtsen, T.~Huynh, R.~W. Jacobs, P.~Knappe, and P.~Wollan.
\newblock {A Menger-Type Theorem for Two Induced Paths}.
\newblock {\em SIAM J.\ Discrete Math.}, 38(2):1438--1450, 2024.

\bibitem{AJKW}
S.~Albrechtsen, R.~Jacobs, P.~Knappe, and P.~Wollan.
\newblock A characterisation of graphs quasi-isometric to ${K}_4$-minor-free
  graphs.
\newblock arXiv:2408.15335.

\bibitem{BalMcMmet}
J.~Balig\'acs and J.~MacManus.
\newblock The metric {Menger} problem.
\newblock arXiv:2403.05630.

\bibitem{BelDraAsy}
G.~Bell and A.~Dranishnikov.
\newblock Asymptotic dimension.
\newblock {\em Topology and its Applications}, 155(12):1265--1296, 2008.

\bibitem{quasitrees}
I.~Benjamini and A.~Georgakopoulos.
\newblock Triangulations of uniform subquadratic growth are quasi-trees.
\newblock {\em {Annales Henri Lebesgue}}, 5:905--919, 2022.

\bibitem{BerSeyBou}
E.~Berger and P.~Seymour.
\newblock Bounded diameter tree-decompositions.
\newblock {\em Combinatorica}, 44(1):659--674, 2024.

\bibitem{BBEGLPS}
M.~Bonamy, N.~Bousquet, L.~Esperet, C.~Groenland, C.-H. Liu, F.~Pirot, and
  A.~Scott.
\newblock Asymptotic {Dimension} of {Minor}-{Closed} {Families} and
  {Assouad}-{Nagata} {Dimension} of {Surfaces}.
\newblock {\em J.\ Eur.\ Math.\ Soc.}, 26(10):3739--3791, 2011.

\bibitem{CDNRV}
V.~Chepoi, F.~F. Dragan, I.~Newman, Y.~Rabinovich, and Y.~Vax\`es.
\newblock Constant {Approximation} {Algorithms} for {Embedding} {Graph}
  {Metrics} into {Trees} and {Outerplanar} {Graphs}.
\newblock {\em Discrete \& Computational Geometry}, 47(1):187--214, 2012.

\bibitem{cornulier_metric_2016}
Yves Cornulier and Pierre de~la Harpe.
\newblock {\em Metric {Geometry} of {Locally} {Compact} {Groups}}.
\newblock EMS Press, 2016.

\bibitem{DHIM}
J.~Davies, R.~Hickingbotham, F.~Illingworth, and R.~McCarty.
\newblock Fat minors cannot be thinned (by quasi-isometries).
\newblock arXiv:2405.09383.

\bibitem{DiestelBook05}
Reinhard Diestel.
\newblock {\em Graph {T}heory \emph{(3rd edition)}}.
\newblock Springer-Verlag, 2005.
\newblock \\ Electronic edition available at:\\ {\small\tt
  http://www.math.uni-hamburg.de/home/diestel/books/graph.theory}.

\bibitem{DisPro}
M.~Distel.
\newblock Proper {Minor}-{Closed} {Classes} of {Graphs} have {Assouad}-{Nagata}
  {Dimension} 2.
\newblock {\em {arXiv:2308.10377}}, 2023.

\bibitem{DEMW}
V.~Dujmovi\'c, L.~Esperet, P.~Morin, and D.~R. Wood.
\newblock Proof of the {Clustered} {Hadwiger} {Conjecture}.
\newblock {\em {arXiv:2306.06224}}, 2023.

\bibitem{ErdPosInd}
P.~Erd\H{o}s and L.~P\'osa.
\newblock On {Independent} {Circuits} {Contained} in a {Graph}.
\newblock {\em Canadian Journal of Mathematics}, 17:347--352, 1965.

\bibitem{eriksson-bique}
S.~Eriksson-Bique, C.~Gartland, E.~Le~Donne, L.~Naples, and Nicolussi~G. S.
\newblock Nilpotent {Groups} and {Bi}-{Lipschitz} {Embeddings} {Into} {L1}.
\newblock {\em International Mathematics Research Notices},
  2023(12):10759--10797, 2023.

\bibitem{EsGiCoa}
L.~Esperet and U.~Giocanti.
\newblock Coarse geometry of quasi-transitive graphs beyond planarity.
\newblock arXiv:2312.08902.

\bibitem{FujPapCoa}
K.~Fujiwara and P.~Papasoglu.
\newblock A coarse-geometry characterization of cacti.
\newblock {arXiv:2305.08512}.

\bibitem{FujPapAsy}
K.~Fujiwara and P.~Papasoglu.
\newblock Asymptotic dimension of planes and planar graphs.
\newblock {\em Trans.\ Am.\ Math.\ Soc.}, 374:8887--8901, 2021.

\bibitem{GaKoLo}
P.~Gartland, T.~Korhonen, and D.~Lokshtanov.
\newblock On {Induced} {Versions} of {Menger}'s {Theorem} on {Sparse} {Graphs}.
\newblock arXiv:2309.08169.

\bibitem{Kleinian}
A.~Georgakopoulos.
\newblock {On planar Cayley graphs and Kleinian groups}.
\newblock {\em Trans.\ Am.\ Math.\ Soc.}, 373:4649--4684, 2020.

\bibitem{GrKrSiDec}
M.~Grohe, S.~Kreutzer, and S.~Siebertz.
\newblock Deciding first-order properties of nowhere dense graphs.
\newblock {\em {J.\ ACM}}, 64(3):1--32, 2017.

\bibitem{GroAsyInv}
M.~Gromov.
\newblock {Asymptotic invariants of infinite groups}.
\newblock In {\em {Geometric group theory, Vol.~2 (Sussex, 1991)}}, number 182
  in London Math.~Soc.~Lecture Note Ser., pages 1--295. Camb.\ Univ.~Press,
  1993.

\bibitem{halin65}
R.~Halin.
\newblock {\"U}ber die {M}aximalzahl fremder unendlicher {W}ege in {G}raphen.
\newblock {\em Math.\ Nachr.}, 30:63--85, 1965.

\bibitem{HNST}
K.~Hendrey, S.~Norin, R.~Steiner, and J.~Turcotte.
\newblock On an induced version of {Menger}'s theorem.
\newblock {arXiv:2309.07905}.

\bibitem{JorLanGeo}
M.~J\o{}rgensen and U.~Lang.
\newblock {Geodesic spaces of low Nagata dimension}.
\newblock {\em {Ann.\ Fenn.\ Math.}}, 47:83--88, 2022.

\bibitem{KerTre}
A.~Kerr.
\newblock Tree approximation in quasi-trees.
\newblock {\em {Groups Geom.~Dyn.}}, 17(4):1193--1233, 2023.

\bibitem{KhuCha}
A.~Khukhro.
\newblock {A Characterisation of Virtually Free Groups via Minor Exclusion}.
\newblock {\em Int.\ Math.\ Res.\ Not.\ IMRN}, page rnac184, 2022.

\bibitem{KroMolQua}
B.~Kr\"on and R.~G. M\"oller.
\newblock Quasi-isometries between graphs and trees.
\newblock {\em J.~Combin.\ Theory (Series B)}, 98(5):994--1013, 2008.

\bibitem{LiuAss}
C.-H. Liu.
\newblock Assouad-{Nagata} dimension of minor-closed metrics.
\newblock {arXiv:2308.12273}.

\bibitem{McMFat}
J.~MacManus.
\newblock Fat minors in finitely presented groups.
\newblock arXiv:2408.10748.

\bibitem{Manning}
J.~F. Manning.
\newblock Geometry of pseudocharacters.
\newblock {\em Geometry \& Topology}, 9:1147--1185, 2005.

\bibitem{NaorInt}
A.~Naor.
\newblock {Metric dimension reduction: A snapshot of the Ribe program}.
\newblock {\em Japan.\ J.\ Math.}, 7:167--233, 2012.

\bibitem{NgScSeCou}
T.~H. Nguyen, A.~Scott, and P.~Seymour.
\newblock A counterexample to the coarse {Menger} conjecture.
\newblock arXiv:2401.06685.

\bibitem{OstExp}
M.~I. Ostrovskii.
\newblock Expansion properties of metric spaces not admitting a coarse
  embedding into a hilbert space.
\newblock {\em {Comptes rendus de l'Academie Bulg.\ des Sci.}}, 62:415--420,
  2009.

\bibitem{OstRosMet}
M.~I. Ostrovskii and D~Rosenthal.
\newblock Metric dimensions of minor excluded graphs and minor exclusion in
  groups.
\newblock {\em Int.\ J.\ Algebra Comput.}, 25(4):541--554, 2015.

\bibitem{Ostrovskii}
Mikhail~I. Ostrovskii.
\newblock {\em {Metric Embeddings: Bilipschitz and Coarse Embeddings into
  Banach Spaces}}.
\newblock De Gruyter, 2013.

\bibitem{ThoHad}
C.~Thomassen.
\newblock The {Hadwiger} number of infinite vertex-transitive graphs.
\newblock {\em Combinatorica}, 12(4):481--491, 1992.

\bibitem{YuNov}
G.~Yu.
\newblock {The Novikov conjecture for groups with finite asymptotic dimension}.
\newblock {\em Ann.\ of Math.}, 147(2):325--355, 1998.

\end{thebibliography}
\end{document}